\definecolor{rouge}{rgb}{0.85,0.1,.4}
\definecolor{bleu}{rgb}{0.1,0.2,0.9}
\definecolor{violet}{rgb}{0.7,0,0.8}
\newtheorem{thm}{Theorem}[section]
\newtheorem{lemma}[thm]{Lemma}
\newtheorem{prop}[thm]{Proposition}
\newtheorem{cor}[thm]{Corollary}
\newtheorem{defi}[thm]{Definition}
\newtheorem{rem}[thm]{Remark}
\newtheorem{question}[thm]{Question}
\newtheorem{ex}[thm]{Example}
\newtheorem{theo_alpha}{Theorem}
\def\theo_alpha{\Alph{theo_alpha}}
\newtheorem{prop_alpha}{Proposition}
\def\theo_alpha{\Alph{theo_alpha}}
\def\theo_alpha{\Alph{theo_alpha}}
\newtheorem{quest_alpha}{Question}
\def\theo_alpha{\Alph{theo_alpha}}
\def\theo_alpha{\Alph{theo_alpha}}
\newtheorem{defi_alpha}{Definition}
\def\theo_alpha{\Alph{theo_alpha}}
\def\le{\leqslant}
\def\ge{\geqslant}
\def\J{\mathscr{J}}
\def\I{\mathcal{I}}
\def\bar#1{\overline{#1}}
\def\O{\mathcal O}      
\def\Z{\mathbb Z}
\def\N{\mathbb N}
\def\C{\mathbb C}
\def\P{\mathbb P}
\def\sl{\mathfrak{sl}}
\def\so{\mathfrak{so}}
\def\sp{\mathfrak{sp}}
\def\g{\mathfrak{g}}
\def\m{\mathfrak{m}}
\def\p{\mathfrak{p}}
\def\z{\mathfrak{z}}
\def\l{{\mathfrak{l}}}
\def\a{{\mathfrak{a}}}
\def\h{{\mathfrak{h}}}
\def\s{{\mathfrak{s}}}
\def\r{{\mathfrak{r}}}
\def\n{{\mathfrak{u}}}
\def\d{{\rm d}}
\def\Im{{\rm im}\,} 
\def\card{\mathrm{card}}
\def\reg{\mathrm{reg}}
\def\sing{\mathrm{sing}}
\def\O{\mathcal{O}}
\def\ad{{\rm ad}\,}
\def\bs{\boldsymbol}
\def\P{\mathscr{P}}
\def\RC{{\rm RC}}
\DeclareMathOperator{\Hom}{Hom}
\DeclareMathOperator{\Spec}{Spec}
\newcommand{\non}{\times}
\newcommand{\oui}{\surd}
\newcommand{\tvi}{\vrule depth7pt height13pt width0pt}
\newenvironment{Dynkin}{\setlength{\unitlength}{1.5pt}\begin{array}{l}}{\end{array}}
\newcommand{\Dbloc}[1]{\begin{picture}(20,20)#1\end{picture}}
\newcommand{\Dcirc}{\put(10,10){\circle{4}}}
\newcommand{\Deast}{\put(12,10){\line(1,0){8}}}
\newcommand{\Dwest}{\put(8,10){\line(-1,0){8}}}
\newcommand{\Dnorth}{\put(10,12){\line(0,1){8}}}
\newcommand{\Dsouth}{\put(10,8){\line(0,-1){8}}}
\newcommand{\Ddoubleeast}{\put(10,12){\line(1,0){10}}\put(10,8){\line(1,0){10}}}
\newcommand{\Ddoublewest}{\put(10,12){\line(-1,0){10}}\put(10,8){\line(-1,0){10}}}
\newcommand{\Dtext}[2]{\makebox(20,20)[#1]{\scriptsize $#2$}}
\newcommand{\Dskip}{\\ [-4.5pt]}
\newcommand{\Dspace}{\Dbloc{}}
\newcommand{\Dleftarrow}{\hskip-5pt{\makebox(20,20)[l]{\Large$<$}}\hskip-25pt}
\newcommand{\Drightarrow}{\hskip-5pt{\makebox(20,20)[l]{\Large$>$}}\hskip-25pt}
\begin{document}

\title{Jet schemes of the closure of nilpotent orbits}

\author[Anne Moreau]{Anne Moreau}
\address{Anne Moreau, Laboratoire de Math\'{e}matiques et Applications, 
T\'{e}l\'{e}port 2 - BP 30179, Boulevard Marie et Pierre Curie, 
86962 Futuroscope Chasseneuil Cedex, France}
\email{anne.moreau@math.univ-poitiers.fr}

\author[Rupert W.T. Yu]{Rupert W.T. Yu}
\address{Rupert W.T. Yu, Laboratoire de MathŽmatiques de Reims EA 4535,
U.F.R. Sciences Exactes et Naturelles,
Universit\'{e} de Reims Champagne Ardenne,
Moulin de la Housse - BP 1039,
51687 Reims cedex 2, France.} 
\email{rupert.yu@univ-reims.fr}

\subjclass{17B08, 14L30, 17B20}
\keywords{nilpotent orbits, jet schemes, induction}

\date{\today}

\begin{abstract} 
We study in this paper the jet schemes of the closure of nilpotent orbits in a 
finite-dimensional complex reductive Lie algebra. 
For the nilpotent cone, which is the closure of the regular 
nilpotent orbit, all the jet schemes are irreducible. This was first 
observed by Eisenbud and Frenkel, and follows from a strong result 
of Musta\u{t}\c{a} (2001). Using induction and restriction of "little" nilpotent orbits 
in reductive Lie algebras, we show that for a large number of nilpotent orbits, 
the jet schemes of their closure are reducible. 
As a consequence, we obtain certain geometrical properties of these nilpotent orbit closures.  
\end{abstract}

\maketitle

\section{Introduction}
Throughout this paper, the ground field will be the field $\C$ of complex numbers.  
We shall work with the Zariski topology, and by {\em variety} we mean a reduced, irreducible 
and separated scheme of finite type over $\C$. 

For $X$ a scheme of finite type over $\C$ and $m\in\N$, we denote by $\J_m(X)$ 
the {\em $m$-th jet scheme of $X$}. It is a scheme of finite type over $\C$ whose
 $\C$-valued points are naturally in bijection with the $\C[t]/(t^{m+1})$-valued 
 points of $X$, cf.~e.g.,~\cite{Mu,EM,Is}.  We have $\J_0(X)\simeq X$ and 
 $\J_1(X)\simeq {\rm T}X$, where ${\rm T}X$ is the total tangent bundle of $X$; 
 see Section~\ref{S:jet} for more details about generalities on jet schemes. 
 From Nash \cite{Nas}, it is known that the geometry of the jet 
 schemes is deeply related to the singularities of $X$. As an illustration of that 
 phenomenon, we have the following result, first conjectured by Eisenbud and 
 Frenkel \cite[Introduction]{Mu}, which will be important for us. 
 
 \begin{theo_alpha}[{\cite[Thm.\,1]{Mu}}] \label{t:intro} 
 Let $X$ be an irreducible scheme of finite type over $\C$. If $X$ is locally a 
 complete intersection, then $\J_m(X)$ is irreducible for every $m\in\N$ if and only 
 if $X$ has rational singularities. 
\end{theo_alpha}  

According to Kolchin \cite{Kol}, in contrast to the above theorem, the arc space 
$\J_{\infty}(X)= \varprojlim\limits_{m} \J_m(X)$ of $X$ is always irreducible when $X$ is irreducible. 
In this paper, we shall be interested in the irreducibility of the jet schemes for 
the closure of nilpotent orbits in a complex reductive Lie algebra. 

Let $G$ be a complex connected reductive algebraic group, $\g$ its Lie 
algebra and $\mathcal{N}(\g)$ the nilpotent cone of $\g$. 
It is the subscheme of 
$\g$ associated to the augmentation 
ideal of $\C[\g]^G$. 
It is a finite union of nilpotent $G$-orbits, and there is a unique 
nilpotent orbit of $\g$, called the {\rm regular nilpotent orbit} 
and denoted by $\O_\reg$, such that 
$\mathcal{N}(\g)=\bar{\O_\reg}$. 

According to Kostant \cite{Ko}, 
the nilpotent cone is a complete intersection which 
is irreducible, reduced and normal. Furthermore, 
by \cite{He}, it has rational singularities.  
Hence by Theorem~\ref{t:intro}, 
the jet scheme $\J_m(\mathcal{N}(\g))$ is irreducible for every $m\ge 1$. In fact, by 
\cite[Prop.\,1.4 and 1.5]{Mu}, $\J_m(\mathcal{N}(\g))$ is also 
a complete intersection which is reduced for every $m\ge 1$.  

In \cite[Appendix]{Mu}, Eisenbud and Frenkel used these results  
to extend certain results of Kostant \cite{Ko} in the setting of jet schemes. 
In particular, they proved that $\C[\J_m(\g)]$ is free over the ring $\C[\J_m(\g)]^{\J_m(G)}$ 
of $\J_m(G)$-invariants of $\C[\J_m(\g)]$. 

Other nilpotent orbit closures do not share these geometrical properties in 
general. Indeed, according to a recent result of Namikawa \cite{Na}, 
for a nonzero and nonregular nilpotent orbit $\O$, $\bar{\O}$ is not a complete 
intersection. In addition, $\bar{\O}$ 
has not always rational singularities since it is not always normal, 
cf.~e.g.,~\cite{LeS,KP,Kr,Bro,So}. 

Thus, it is quite natural to ask the following question.  

\begin{quest_alpha} \label{q:intro} 
Let $\O$ be a 
nilpotent orbit of $\g$, and $m\in\N^*$. Is $\J_m(\bar{\O})$ irreducible?
\end{quest_alpha}

Answer Question \ref{q:intro} is the main purpose of this paper. 
For the zero orbit and the regular orbit, the answer is positive for every $m\in\N$. 
Outside these extreme cases, we will see that these jet schemes are rarely irreducible. 

\subsection*{Motivations}
Since $\bar{\O}$ is not 
a complete intersection for $\O$ nonzero and nonregular, 
Theorem \ref{t:intro} cannot be applied directly to 
answer Question~\ref{q:intro}. 
Very recently, Brion and Fu gave another proof of Namikawa's result, 
which is more uniform and slightly shorter, \cite{BF}.  
An interesting question, raised by Michel Brion to the first author, is 
whether jet schemes can be used to provide another proof of 
Namikawa's result. 

Let us explain how we can tackle this problem using jet schemes. 
Let $\O$ be a nilpotent orbit of $\g$. 
The singular locus of $\bar{\O}$
is exactly $\bar{\O} \setminus \O$. This follows from \cite[Lem.\,1.4]{Ka} and \cite{Pa}; 
see also \cite[Sec.\,2]{Hen} for a recent review. 
Moreover, we have 
$$
{\rm codim}_{\bar{\O}}(\bar{\O} \setminus \O)\ge 2.
$$  
For the nilpotent cone, we have precisely 
${\rm codim}_{\mathcal{N}(\g)}(\mathcal{N}(\g) 
\setminus \O_\reg)=2$, and the equality $\mathcal{N}(\g)_\reg=\O_\reg$ is a 
consequence of \cite[Thm.\,9]{Ko} (thus the notation $\O_\reg$ does not bear any confusion).  

So, if we assume that $\bar{\O}$ is 
a complete intersection, then $\bar{\O}$ is normal and so it has rational singularities 
by \cite{Hi} or \cite{Pa}. 
Hence, in that event, Musta\u{t}\c{a}'s Theorem implies that $\J_m(\bar{\O})$ is irreducible 
for every $m\ge 1$. So if we can show that  
$\J_m(\bar{\O})$ is reducible for some $m\ge 1$, then we would obtain a 
contradiction\footnote{There are other 
approaches to use jet schemes to show that $\bar{\O}$ is not a complete intersection; 
see Example \ref{ex:Nam}.}.  
The above was our original motivation to look into Question \ref{q:intro}.

It may happen that a variety $X$ is not a complete intersection, that $X$ 
has rational singularities and that nonetheless $\J_m(X)$ is irreducible for 
every $m\ge 1$. The cone over the Segre embedding 
$$\mathbb{P}^1 \times\mathbb{P}^{n-1} \hookrightarrow \mathbb{P}^{2n-1}, 
\qquad n \ge 2,$$ 
shows that this situation is possible, cf.~\cite[Ex.\,4.7]{Mu}. 
We do not know so far whether this situation may happen in the context 
of nilpotent orbit closures. 

More generally, following Nash's philosophy, it would be interesting to understand 
what kind of properties on the singularities of $\bar{\O}$ we can deduce from the study 
of $\J_m(\bar{\O})$, $m\ge 1$. 
The fact that $\bar{\O}$ is not a complete intersection (with $\O$ nonzero and 
nonregular) whenever $\J_m(\bar{\O})$ is reducible for some $m\geq 1$ is one 
illustration of such a phenomenon.

Nilpotent orbit closures also form an interesting family of varieties
providing examples and counter-examples in the context of jet schemes. 
For example, Examples \ref{ex2:2p} and \ref{ex:33} illustrate that the locally
complete intersection hypothesis cannot be removed from 
Lemma~\ref{l:irr},(3), and Theorem~\ref{t:Mus},(3).
Another example is that the normality is not conserved when we pass to jet schemes.
By Kostant, the nilpotent cone $\mathcal{N}(\g)$ is normal, and  
we show in Proposition~\ref{p:norm} that $\J_m(\mathcal{N}(\g))$, $m\ge 1$, is not normal  
for a simple Lie algebra $\g$. 

\subsection*{Main results} 
Let us describe the main techniques used to study Question \ref{q:intro}, and 
summarize the main results of the paper. 
To avoid technical details, we shall assume here that $\g$ is simple.


Let $X$ be an irreducible variety, and $m \in\N$. Then 
$\J_m(X)$ is irreducible if and only if 
$$
\pi_{X,m}^{-1}(X_{\sing}) \subset \bar{\pi_{X,m}^{-1}(X_{\reg})},
$$ 
where $\pi_{X,m} :\J_m(X) \to X$ is the 
canonical projection from $\J_m(X)$ onto $X$ (cf.~Section \ref{S:jet}), $X_\reg$ is the smooth part of $X$, and 
$X_{\sing}$ its complement 
(cf.~Lemma \ref{l:irr}). This is our starting point.  


For $\O$ a nilpotent orbit of $\g$, the singular locus of $\bar{\O}$ is $\bar{\O}\setminus 
\O$ (cf.~Section \ref{S:nil}). 
The above criterion leads us to the following two conditions  
which will be central in our paper (cf.~Definition \ref{d:RC}).  

\begin{defi_alpha} \label{d:intro}
Let $\O$ be a nilpotent orbit of $\g$. 
\begin{itemize} 
\item[1)] We say that $\O$ {\em verifies \RC$_1$} if $\pi_{\bar{\O},1}^{-1}(0)$ 
is not contained in the closure of $\pi_{\bar{\O},1}^{-1}(\O)$.   
\item[2)] Let $m\in\N^*$. 
We say that $\O$ {\em verifies \RC$_2(m)$} if for some nilpotent orbit $\O'$ 
contained in $\bar{\O}\setminus \O$, we have $\dim \pi_{\bar{\O},m}^{-1}(\O') 
\ge \dim \pi_{\bar{\O},m}^{-1}(\O)$. 
\end{itemize}
Here the letters \RC~stand for ``reducibility condition''.
\end{defi_alpha}

It follows readily (cf.~Lemma \ref{l:RC}) that 
if a nilpotent orbit $\O$ of $\g$ verifies \RC$_1$ (resp.\ \RC$_2(m)$ for some $m\in\N^*$), 
then $\J_1(\bar{\O})$ (resp.\ $\J_m(\bar{\O})$) is reducible.  

We have a characterization for the condition \RC$_1$ (cf.~Proposition \ref{p:eta}) 
which allows us for example to show that the nilpotent orbits 
of $\sl_{2p}(\C)$, with $p\ge 2$, associated with partitions of the form $(2^p)$ 
verify \RC$_1$ (cf.~Example~\ref{ex:2p}). Note that these orbits do not verify \RC$_2(1)$ 
(see again Example~\ref{ex:2p}). 

A nilpotent orbit $\O$ is called {\em little} if 
$0< 2\dim\O \le \dim\g $ (cf.~Definition \ref{d:litt}). 
For example, the minimal nilpotent orbit of $\g$ is little (cf.~Corollary \ref{c:min}), 
and the nilpotent orbits of $\sl_n(\C)$ associated with partitions of the form 
$(2^p,1^q)$, with $p,q\in\N^*$, are little (cf.~Example \ref{ex:litt}). There 
are many other examples (see Section \ref{S:litt}). 
Little nilpotent orbits verify 
both \RC$_1$ and \RC$_2(m)$ for every $m\in\N^*$ (cf.~Proposition \ref{p:litt}), 
and they turn out to be useful to study the  
reducibility of jet schemes of many other orbits via "restriction" or "induction" of orbits.

Firstly, by "restriction" to some Levi subalgebras of $\g$ (cf.~Proposition \ref{p:res}), 
we can obtain from nilpotent orbits $\O$ which verify $0< 2\dim\O < \dim\g$ 
examples of nilpotent orbits which verify \RC$_1$ (and that are not necessarily little); 
see Table \ref{tab:res}. More precisely, we have the following statement 
(cf.~Proposition~\ref{p:res}\,\footnote{Proposition \ref{p:res} is stated in a slightly more general 
context.}).

\begin{prop_alpha} \label{p2:intro}
Let $\l$ be a Levi subalgebra of $\g$ with a center of dimension one, and such that 
$\a:=[\l,\l]$ is simple. Denote by $A$ the connected subgroup 
of $G$ whose Lie algebra is $\a$. Let $e$ be a nilpotent element of $\a$ 
and suppose that the following conditions are satisfied: 
\begin{itemize}
\item[{\rm (i)}\;] $\a$ contains a regular semisimple element of $\g$, 
\item[{\rm (ii)}]  $2\dim G.e < \dim\g$. 
\end{itemize}
Then $A.e$ verifies \RC$_1$. 
\end{prop_alpha}


Secondly, by "induction", we can reach from nilpotent orbits of reductive Lie subalgebras of $\g$
many nilpotent orbits of $\g$. Here, we consider induction in the sense 
of Lusztig-Spaltenstein \cite{LS}. 
We refer the reader to Section \ref{S:ind} for the precise definition of a nilpotent 
orbit of $\g$ induced from another one in some proper Levi subalgebra $\l$ of $\g$. 
Our next statement says that condition \RC$_2(m)$, for $m\in\N^*$, passes through 
induction. 

\begin{theo_alpha} \label{t2:intro}
Let $\l$ be a Levi subalgebra of $\g$, $\O_\l$ a nilpotent orbit of $\l$ 
and $\O_\g$ the induced nilpotent orbit of $\g$ from $\O_\l$. If $\O_\l$ 
verifies \RC$_2(m)$ for some $m\in\N^*$, then $\O_\g$ also verifies \RC$_2(m)$. 
\end{theo_alpha}

From this result, we are able to deal with a large number of nilpotent orbits. 
First of all, any nilpotent orbit induced from a nilpotent orbit 
that has a little factor verifies \RC$_2(m)$ for every $m\in\N^*$ (cf.~Theorem~\ref{t2:ind}). 
In particular, if $\g$ is not of type $\bf{A_1}$, 
$\bf{B_2}=\bf{C_2}$ or $\bf{G_2}$, then the subregular nilpotent orbit $\O_{\rm subreg}$ of $\g$ 
verifies \RC$_2(m)$ for every $m\in\N^*$ (cf.~Corollary \ref{c:sub}),  
and so $\J_m(\bar{\O_{\rm subreg}})$ is reducible for every $m\in\N^*$.  


It turns out that many nilpotent orbits can be induced from a nilpotent orbit that has a little factor. 
This allows us to obtain the following result when $\mathfrak{g}$ is of type ${\bf A}$
(cf.~Theorem \ref{t:A}). 

\begin{theo_alpha} \label{t3:intro} 
Any nilpotent orbit of $\sl_n(\C)$ associated with a non rectangular 
partition of $n$ verifies \RC$_2(m)$ for every $m\in\N^*$.  
\end{theo_alpha}

For the other simple Lie algebras of classical types, we have the following (cf.~Theorem \ref{t:BCD}).

\begin{theo_alpha} \label{t4:intro}
Let $n\in \mathbb{N}^{*}$, $\bs{\lambda}=(\lambda_{1},\dots ,\lambda_{t})$ be a partition of $n$ and set $\lambda_{t+1}=0$.
Suppose that there exist $1\leqslant k < \ell \leqslant t$ such that $\lambda_{k} \ge \lambda_{k+1}+2$ and
$\lambda_{\ell} \ge \lambda_{\ell+1}+2$. 
\begin{itemize}
\item[1)] If $\O$ is a nilpotent orbit of $\so_{n}(\C )$ whose associated partition is $\bs{\lambda}$, then $\O$ 
verifies  \RC$_2(m)$ for every $m\in\N^*$. 
\item[2)] If $n$ is even and $\O$ is a nilpotent orbit of $\sp_{n}(\C )$ whose associated partition is $\bs{\lambda}$, then $\O$ 
verifies  \RC$_2(m)$ for every $m\in\N^*$. 
\end{itemize}
\end{theo_alpha}

While our result in the special linear case is exhaustive relative to induction, in the orthogonal and symplectic cases,
other nilpotent orbits can be obtained by induction from a little orbit (cf.~Theorem~\ref{t:BCD} and
Remark~\ref{rk:BCD}). For a simple Lie algebra of exceptional type, we have a list of nilpotent orbits which can be induced
from a little one (cf.~Appendix \ref{app:exc}).

\subsection*{Organization of the paper} 
In Section \ref{S:jet}, we state some basic properties on jet schemes with some proofs for the 
convenience of the reader. 

In Section \ref{S:nil}, we recall some standard properties of nilpotent orbit closures, and of their
jet schemes. We introduce here the two sufficient conditions \RC$_1$ and \RC$_2(m)$, $m\ge 1$,
to study the reducibility of these jet schemes, and 
we state some first properties of these conditions. 

Section \ref{S:litt} is devoted to little nilpotent orbits.
We show that little nilpotent orbits verify  
both \RC$_1$ and \RC$_2(m)$ for every $m\ge 1$, and we show how they can be used to prove 
condition \RC$_1$ via the "restriction" of orbits (cf. Proposition \ref{p:res}). 

In Section \ref{S:ind}, we study the induction of nilpotent orbits the sense of Luzstig-Spaltenstein, \cite{LS}.
The main result is that condition \RC$_2(m)$, for $m\ge 1$, 
passes through induction (cf.~Theorem \ref{t:ind}). 
We describe in Section \ref{S:csq} how to use Theorem \ref{t:ind} to obtain the reducibility of 
nilpotent orbit closures in simple Lie algebras according to their Dynkin type. 
The details of some of the conclusions 
are presented in Appendices \ref{app:stat} and \ref{app:exc}. 

We present in Section \ref{S:app} 
some applications of our results to geometrical properties of nilpotent orbit closures. 
We also discuss in this section some open problems.  

The standard notations relative to nilpotent orbits in classical simple Lie algebras 
are gathered together in Appendix~\ref{app:not}. 
Appendix~\ref{app:stat} contains some numerical data for 
classical simple Lie algebras, and Appendix~\ref{app:exc} summarizes our conclusions for 
simple Lie algebras of exceptional type. 

\subsection*{Acknowledgments} 
We thank Michel Brion for suggesting us this topic, and bringing to our attention Namikawa's 
result.  We are also indebted to {\em Mathematisches Forschungsinstitut Oberwolfach} (MFO) 
for its hospitality during our stay as "Research in pairs" in October, 2014.

We are grateful to Micha\"{e}l Bulois for pointing out an error in a previous version, 
and to Jean-Yves Charbonnel and Michel Brion for their comments.  

We are also thankful to the referee for careful reading and thoughtful suggestions.   

This research is supported by the ANR Project GERCHER Grant number ANR-2010-BLAN-110-02.

\tableofcontents

\section{Generalities on jet schemes} \label{S:jet}
In this section, we present some general facts on jet schemes. 
Our main references on the topic are \cite{Mu,EM,Is}, and  \cite[Chap.\,8]{DEM}. 

Let $X$ be a scheme of finite type over $\C$, and $m\in\N$. 

\begin{defi} \label{d:jet} 
An {\em $m$-jet of $X$} is a morphism
$$
\Spec \C[t]/(t^{m+1}) \longrightarrow X.
$$
The set of all $m$-jets of $X$ carries the structure of a scheme $\J_m(X)$, 
called the {\em $m$-th jet scheme of $X$}. 
It is a scheme of finite type over $\C$ characterized by the following functorial property: 
for every scheme $Z$ over $\C$, we have  
$$
\Hom(Z,\J_m(X)) = \Hom(Z\times_{\Spec\C} \Spec \C[t]/(t^{m+1}),X).
$$
\end{defi}
The $\C$-points of $\J_m(X)$ are thus the $\C[t]/(t^{m+1})$-points of $X$. 
From Definition \ref{d:jet}, we have for example that $\J_0(X) \simeq X$ and that 
$\J_1(X)\simeq {\rm T}X$ where ${\rm T}X$ denotes the total tangent bundle of $X$.

For $p\in\{0,\ldots,m\}$, the canonical projection $\C[t]/(t^{m+1})\to \C[t]/(t^{p+1})$ 
induces a {\em truncation morphism},
$$
\pi_{X,m,p} : \J_m(X) \rightarrow \J_p(X).
$$ 
We shall simply denote by $\pi_{X,m}$ 
the morphism $\pi_{X,m,0}$, 
$$
\pi_{X,m} : \J_m(X) \rightarrow \J_0(X)\simeq X.
$$ 
Also, the canonical injection $\C \hookrightarrow \C[t]/(t^{m+1})$ 
induces a morphism $\iota_{X,m} : X \to \J_m(X)$, and we have 
$ \pi_{X,m} \circ  \iota_{X,m}={\rm Id}_X$. Hence $ \iota_{X,m}$ 
is injective and  $\pi_{X,m}$ is surjective. We shall always view $X$ as 
a subscheme of $\J_m(X)$. 

If $f :X \to Y$ is a morphism of schemes, then we naturally obtain a morphism 
$f_m :\J_m(X) \to \J_m(Y)$ making the following diagram commutative,

\begin{center}
~\xymatrix{\J_m(X) \ar[r]^{f_m}\ar[d]_{\pi_{X,m}} & \J_m(Y) \ar[d]^{\pi_{Y,m}}\\
X \ar[r]_{f} & Y }
\end{center}

\begin{rem} \label{rk:aff} 
In the case where $X$ is affine, we have the following 
explicit description of $\J_m(X)$. 

Let $n\in \N^{*}$ and $X \subset \C^n$ be the affine subscheme defined by an ideal $I =(f_1,\ldots,f_r)$
of $\C[x_1,\ldots,x_n]$. Thus
$$
X =\Spec\,\C[x_1,\ldots,x_n]/I.
$$
For $k\in\{1,\ldots,r\}$, we extend $f_k$ as a map from $(\C[t]/(t^{m+1}))^n$ to $\C[t]/(t^{m+1})$ 
via base extension. Then giving a morphism
$\gamma: \Spec\C[t]/(t^{m+1}) \to X$ is equivalent to giving a morphism 
$\gamma^*: \C[x_1,\ldots,x_n]/I \to \C[t]/(t^{m+1})$, 
or  to giving 
$$
\gamma^*(x_i) =\sum\limits_{j=0}^m \gamma_{i}^{(j)} t^j \qquad (1\le i\le n)
$$
such that for any $k\in\{1,\ldots,r\}$, 
$$
f_k(\gamma^*(x_1),\ldots,\gamma^*(x_n)) =0 \; \text{ in }\; \C[t]/(t^{m+1}).
$$
For $k\in\{1,\ldots,r\}$, 
there exist functions $f_k^{(0)},\ldots,f_k^{(m)}$, which depend only on $f$, 
in the variables $\boldsymbol{\gamma} = (\gamma_{i}^{(j)})_{\genfrac{}{}{0pt}{}{1 \le i \le n,}{0 \le j \le m}}$
such that 
\begin{eqnarray} \label{eq;aff}  
f_k\bigl(\gamma^*(x_1),\ldots,\gamma^*(x_n)\bigr) = \sum_{j=0}^{m} 
f_k^{(j)} ( \boldsymbol{\gamma} ) \, t^j.
\end{eqnarray}  
The jet scheme $\J_m(X)$ is then the closed subscheme in $\C^{(m+1)n}$ defined by the ideal generated by the
polynomials $f_k^{(j)}$, where $k\in\{1,\ldots,r\}$ and $j\in\{0,\ldots,m\}$. More precisely,
$$
\J_m(X) \simeq \Spec \C [ x_1^{(j)},\ldots,x_n^{(j)} ; j=0,\ldots, m ]/(f_k^{(j)} \, ;\, k=1,\ldots,  r, \, j=0,\ldots,m).
 $$ 

In particular, if $X$ is an $n$-dimensional vector space, then 
$\J_m(X) \simeq \C^{(m+1)n}$ and for $p\in \{0,\ldots,m\}$, 
the projection $\J_m(X) \to  \J_p(X)$ corresponds to the projection 
onto the first $(p+1)n$ coordinates. 
\end{rem}

\begin{ex} \label{ex:aff} 
Let us consider a concrete example. 
Let $X= \Spec\C[x,y,z]/(x^2+yz) \subset \C^{3}$ and let us compute 
$\J_1(X)$ and $\J_2(X)$. We have 
$$
(x_0+x_1 t+x_2 t^2)^2+(y_0+y_1t+y_2 t^2)(z_0+z_1 t+z_2 t^2)
$$
$$
\hspace{1cm}= x_0^2 + y_0z_0 + (2 x_0 x_1 +y_0 z_1+ y_1 z_0 ) t+ (2 x_0 x_2 + x_1^2 
+y_0 z_2 + y_2 z_0 + y_1 z_1 )t^2 \mod t^3. 
$$
Hence $\J_1(X)$ is the subscheme of 
$$
 \J_1(\C^3) \simeq  \C[x_0,y_0,z_0,x_1,y_1,z_1] 
 $$
 defined by the ideal 
$$
(x_0^2 + y_0z_0, 2 x_0 x_1 +y_0 z_1+ y_1 z_0), 
$$ 
and $\J_2(X)$ is the subscheme of 
$$
\J_2(\C^3) \simeq \C[x_0,y_0,z_0,x_1,y_1,z_1,x_2,y_2,z_3] 
$$
defined by the ideal 
$$
(x_0^2 + y_0z_0, 2 x_0 x_1 +y_0 z_1+ y_1 z_0, 
2 x_0 x_2 + x_1^2 +y_0 z_2+y_1 z_1 + y_2 z_0).
$$ 
\end{ex}

We now list some basic properties that we need in the sequel. Their proofs can found in 
\cite[Lem.\,2.3, Rem.\,2.8, Rem.\,2.10]{EM}.

\begin{lemma}  \label{l:jet}
\begin{itemize}
\item[1)]  For every open subset $U$ of $X$, we have $\J_m(U) = \pi_{X,m}^{-1}(U)$.  
\item[2)] For every scheme $Y$, we have a canonical 
isomorphism $\J_m(X\times Y)\simeq \J_m(X)\times \J_m(Y).$ 
\item[3)] If $G$ is a group scheme over $\C$, then $\J_m(G)$ is also a group 
scheme over $\C$. Moreover, if $G$ acts on $X$, then $\J_m(G)$ acts on 
$\J_m(X)$. 
\item[4)] If $f : X\to Y$ is a smooth surjective morphism between schemes, then 
$f_m$ is also smooth and surjective for every $m\in \N^{*}$. 
\end{itemize}
\end{lemma}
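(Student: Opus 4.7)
The plan is to leverage the functorial characterization of jet schemes from Definition~\ref{d:jet}, namely that $\J_m(X)$ represents the functor $Z \mapsto \Hom(Z \times_{\Spec\C} \Spec\C[t]/(t^{m+1}), X)$. All four assertions should flow from this universal property together with the observation that $\Spec\C[t]/(t^{m+1})$ is a local scheme whose underlying topological space is a single point.

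For part 1), since $\Spec\C[t]/(t^{m+1})$ has a single point, the topological image of a morphism $Z \times_{\Spec\C}\Spec\C[t]/(t^{m+1}) \to X$ coincides with the image of its restriction along $\iota_{X,m}$, i.e.\ of the underlying map $Z \to X$. Hence such a morphism factors through the open subscheme $U$ if and only if the associated composition $Z \to X$ lands in $U$. Applied to $Z = \J_m(X)$ and the universal $m$-jet, this identifies $\J_m(U)$ with the open subscheme $\pi_{X,m}^{-1}(U)$ of $\J_m(X)$. For part 2), the universal property, together with the fact that $\Hom$ into a product is a product of $\Hom$'s, gives
$$\Hom(Z, \J_m(X \times Y)) = \Hom(Z', X \times Y) = \Hom(Z', X) \times \Hom(Z', Y) = \Hom(Z, \J_m(X) \times \J_m(Y)),$$
with $Z' := Z \times_{\Spec\C}\Spec\C[t]/(t^{m+1})$; Yoneda then yields the canonical isomorphism.

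Part 3) will follow from the functoriality of $\J_m$ together with part 2) and the triviality $\J_m(\Spec\C) = \Spec\C$ (immediate from the universal property applied to the terminal object). The multiplication, unit, and inverse morphisms defining the group scheme $G$ transport under $\J_m$ to corresponding morphisms on $\J_m(G)$, and the group axioms, being expressed by commutative diagrams, are preserved. The same reasoning applied to an action morphism $G \times X \to X$ produces the action $\J_m(G) \times \J_m(X) \to \J_m(X)$.

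For part 4), I would treat surjectivity and smoothness separately. Surjectivity on $\C$-points amounts to lifting a jet $\gamma : \Spec\C[t]/(t^{m+1}) \to Y$ to a jet of $X$; this follows by induction on the chain of nilpotent thickenings $\Spec\C \hookrightarrow \Spec\C[t]/(t^2) \hookrightarrow \cdots \hookrightarrow \Spec\C[t]/(t^{m+1})$, using the infinitesimal lifting property of the smooth morphism $f$ at each step. For smoothness of $f_m$, I would verify the infinitesimal lifting criterion directly: given a nilpotent closed immersion $Z_0 \hookrightarrow Z$ together with a commutative square with sides $Z_0 \to \J_m(X)$ and $Z \to \J_m(Y)$, the universal property rewrites the desired lift $Z \to \J_m(X)$ as an extension of the morphism $Z_0 \times_{\Spec\C} \Spec\C[t]/(t^{m+1}) \to X$ to $Z \times_{\Spec\C} \Spec\C[t]/(t^{m+1})$ over $Y$. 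Since the base change of a nilpotent closed immersion is again a nilpotent closed immersion, the smoothness of $f$ supplies the sought lift. The main point to be careful about is that to upgrade the lifting property to smoothness one also needs $f_m$ to be locally of finite presentation, which is automatic since all schemes in sight are of finite type over $\C$.
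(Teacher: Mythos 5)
The paper offers no proof of its own here; it cites \cite[Lem.\,2.3, Rem.\,2.8, Rem.\,2.10]{EM}, and your functorial argument is exactly the standard derivation from the universal property that those references use, so the approach matches and your parts (1)--(3) are correct and complete. In part (4), one small point deserves to be closed out: your inductive lifting along $\Spec\C \hookrightarrow \Spec\C[t]/(t^{2}) \hookrightarrow \cdots$ establishes surjectivity of $f_m$ on $\C$-valued points, which is not quite the same as surjectivity of the scheme morphism. In the present setting this gap is harmless -- once you have shown $f_m$ is smooth it is in particular open, so its image is an open subscheme containing every closed point of $\J_m(Y)$, and the Nullstellensatz (schemes of finite type over $\C$ are Jacobson) forces the complement to be empty -- but it is worth saying so explicitly rather than leaving the passage from $\C$-points to genuine surjectivity implicit. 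With that one sentence added, the proof is complete.
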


\subsection*{Geometrical properties} 
It is known that the geometry of the jet schemes $\J_m(X)$, $m\ge 1$,  
is closely linked to that of $X$. More precisely, we can transport 
some geometrical properties from $\J_m(X)$ to $X$. 

The following proposition gives examples of such phenomena
(\cite{MFK} and \cite[Thm.\,3.5]{Is}). 

\begin{prop} \label{p:geo} Let $m\in \N^*$. 
If $\J_m(X)$ is smooth (resp., irreducible, 
reduced, normal, locally a complete intersection) for some $m$, then so is $X$. 
\end{prop}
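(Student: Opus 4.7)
My plan hinges on the canonical section $\iota := \iota_{X,m}: X\hookrightarrow \J_m(X)$ of the projection $\pi := \pi_{X,m}: \J_m(X)\to X$, which makes $X$ a closed retract of $\J_m(X)$ and in particular forces $\pi$ to be surjective. Irreducibility of $X$ is then immediate: the continuous surjection $\pi$ sends the irreducible $\J_m(X)$ onto $X$. Reducedness is local and almost as quick: for $x\in X$ and $\xi := \iota(x)$, the identity $\iota^\sharp\circ\pi^\sharp = \id_{\O_{X,x}}$ makes $\pi^\sharp:\O_{X,x}\hookrightarrow \O_{\J_m(X),\xi}$ injective, so a nilpotent of $\O_{X,x}$ would lift to a nilpotent of the reduced ring $\O_{\J_m(X),\xi}$, a contradiction.

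For the three remaining local properties I would reduce to the affine setting of Remark~\ref{rk:aff}, writing $X = \Spec \C[x_1,\dots,x_n]/(f_1,\dots,f_r)$ and $\J_m(X)\subset \C^{(m+1)n}$ cut out by the $f_k^{(j)}$ of~(\ref{eq;aff}). At $\iota(x)$, where all coordinates $x_i^{(j)}$ with $j\ge 1$ vanish, a direct inspection shows that the linear part of $f_k^{(j)}$ is exactly $\sum_i (\partial f_k/\partial x_i)(x)\,x_i^{(j)}$, every other contribution to $f_k^{(j)}$ involving a product of at least two of the $x_i^{(j')}$ with $j'\ge 1$. Consequently the Jacobian of the defining equations of $\J_m(X)$ at $\iota(x)$ is block-diagonal with the Jacobian $(Jf)(x)$ repeated $m+1$ times, whence $\dim T_\xi \J_m(X) = (m+1)\dim T_x X$ and the minimal number of local generators of the defining ideal of $\J_m(X)$ at $\xi$ is $(m+1)$ times that of $X$ at $x$. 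Combined with the bound $\dim_\xi \J_m(X)\ge (m+1)\dim_x X$, which holds since $\iota(x)$ lies in the closure of $\J_m(X_\reg) = \pi^{-1}(X_\reg)$, this at once yields the smoothness and LCI cases: smoothness of $\J_m(X)$ at $\xi$ forces $\dim T_x X = \dim_x X$, hence smoothness of $X$ at $x$; the LCI condition at $\xi$ transfers to the analogous codimension/generator equality at $x$.

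Normality is the most delicate case, since, unlike reducedness, it does not pass through an arbitrary ring retract. My plan is to verify Serre's criterion $(R_1)+(S_2)$ for $X$: condition $(R_1)$ will follow from the smoothness statement just proved, applied at the generic points of the codimension-one components of $X_\sing$ and making use of the block-diagonal codimension bookkeeping above, while $(S_2)$ will be inherited from $\J_m(X)$ through the split injection $\pi^\sharp$, which exhibits $\O_{X,x}$ as a pure subring of $\O_{\J_m(X),\xi}$. I expect this last transfer of $(S_2)$ along the retract to be the main obstacle: it is precisely here that the purely categorical retract argument used for reducedness must be supplemented by the explicit block-diagonal Jacobian computation (or, if necessary, by appealing to the openness of the $(S_2)$-locus of a Noetherian scheme).
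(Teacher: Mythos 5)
The paper itself gives no proof of Proposition~\ref{p:geo}: it simply cites \cite{MFK} and \cite[Thm.\,3.5]{Is}. So your argument must be assessed on its own merits, and there I see genuine problems in three of the five cases.

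Your treatments of \emph{irreducibility} (surjectivity of $\pi_{X,m}$) and of \emph{reducedness} (the split injection $\pi^\sharp:\O_{X,x}\hookrightarrow\O_{\J_m(X),\xi}$) are both correct. The block-diagonal Jacobian computation at $\iota(x)$ is also correct and does give $\dim T_\xi\J_m(X)=(m+1)\dim T_x X$, although the parenthetical ``a product of at least two of the $x_i^{(j')}$ with $j'\ge 1$'' is slightly off — the quadratic terms can involve $x_i^{(0)}-x_i$ together with one $x_{i'}^{(j)}$ — without affecting the conclusion.

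The \emph{smoothness} step, however, has an inequality pointing the wrong way. From smoothness at $\xi$ and the tangent-space identity you get $\dim_\xi\J_m(X)=(m+1)\dim T_x X$, and your lower bound $\dim_\xi\J_m(X)\ge(m+1)\dim_x X$ then yields $\dim T_x X\ge\dim_x X$ — which is \emph{always} true and says nothing. To conclude that $X$ is smooth at $x$ you need the reverse inequality $\dim_\xi\J_m(X)\le(m+1)\dim_x X$, and this requires an additional argument (for instance: $\J_m(X)$ smooth forces it to be locally irreducible at $\xi$, and since every $\overline{\J_m(X_i^\circ)}$ for $X_i$ a component of $X$ through $x$ passes through $\xi$, they must all coincide with the unique local component; or, more structurally, $\iota(X)$ is the scheme-theoretic fixed locus of the contracting $\mathbb{G}_m$-action on $\J_m(X)$ and one invokes smoothness of torus fixed loci).

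The \emph{LCI} step relies on the claim that ``the minimal number of local generators of the defining ideal of $\J_m(X)$ at $\xi$ is $(m+1)$ times that of $X$ at $x$,'' but the Jacobian computation does not give this. The minimal number of generators of the defining ideal $I$ at a point is $\dim_{\C}\,I/\mathfrak{m}I$, whereas the Jacobian rank only computes the dimension of the image of $I/\mathfrak{m}I\to\mathfrak{m}/\mathfrak{m}^2$; these agree at smooth points but not in general (and precisely not at singular LCI points, where by definition the number of generators exceeds the Jacobian rank). So this step is unjustified, and I don't see how to repair it by linear algebra alone. Besides the generator count, the same dimension issue as in the smooth case reappears.

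Finally, for \emph{normality} your premise that ``unlike reducedness, it does not pass through an arbitrary ring retract'' is false, and the detour through $(R_1)+(S_2)$ is unnecessary (and in fact infected by the gap in the smoothness step, on which your $(R_1)$ argument depends). Since $\O_{\J_m(X),\xi}$ is a normal local ring, it is a domain, so its subring $\O_{X,x}$ is a domain; given the ring retraction $r=\iota^\sharp$ with $r\circ\pi^\sharp=\mathrm{id}$, any $\alpha=a/s\in\mathrm{Frac}(\O_{X,x})$ integral over $\O_{X,x}$ lies in $\O_{\J_m(X),\xi}$, and applying $r$ to $s\alpha=a$ gives $s\,r(\alpha)=a$, hence $\alpha=r(\alpha)\in\O_{X,x}$. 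So normality does descend along the retract directly, which is the argument you should use.
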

For smoothness, the converse is true, even with "every $m$" instead 
of "for some $m$". In fact, for smooth varieties, we have the following 
more precise statement, \cite[Cor.\,2.12]{EM}. 

\begin{prop} \label{p:smo}
If $X$ is a smooth variety of dimension $n$, then the truncation morphism  
$\pi_{m,p}$, for $p\in\{0,\ldots,m\}$, is a locally trivial projection with fiber 
isomorphic to $\C^{(m-p)n}$. In particular, $\J_m(X)$ is a smooth variety of dimension 
$(m+1)n$.  
\end{prop}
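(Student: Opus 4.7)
The plan is to reduce to the affine-space case and then transport the result along étale maps using the functorial definition of jet schemes.

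First, I would treat the model case $X=\A^n$ using the explicit description in Remark~\ref{rk:aff}. There, $\J_m(\A^n)\simeq \A^{(m+1)n}$ canonically, and the truncation $\pi_{\A^n,m,p}\colon \J_m(\A^n)\to \J_p(\A^n)\simeq \A^{(p+1)n}$ is the projection onto the first $(p+1)n$ coordinates. This is a \emph{trivial} (in particular locally trivial) projection with fiber $\A^{(m-p)n}$, so the statement is immediate in this case.

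Second, for a general smooth variety $X$, the conclusion is local on $X$ (and hence, via $\pi_{X,m,p}$, local on $\J_p(X)$, since $\pi_{X,m,p}$ factors through $\pi_{X,p}$ up to identifying $X$ with its image under $\iota_{X,p}$; more efficiently, local triviality over $X$ implies local triviality over $\J_p(X)$ by composition with $\pi_{X,p}$). I would therefore pick a point $x\in X$ and, using that $X$ is smooth of dimension $n$, choose an open neighborhood $U\subset X$ of $x$ together with an étale morphism $\phi\colon U\to \A^n$. By Lemma~\ref{l:jet}\,(1), $\J_m(U)=\pi_{X,m}^{-1}(U)$, so it suffices to prove the statement for $U$ in place of $X$.

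Third, the key step is the following claim: \emph{if $\phi\colon U\to Y$ is étale, then the induced diagram
\begin{equation*}
\xymatrix{
\J_m(U) \ar[r]^{\phi_m}\ar[d]_{\pi_{U,m}} & \J_m(Y)\ar[d]^{\pi_{Y,m}} \\
U \ar[r]_{\phi} & Y
}
\end{equation*}
is cartesian.} Granting this, pulling back the trivial fibration of the first step along $\phi$ yields $\J_m(U)\simeq U\times_{\A^n}\J_m(\A^n)\simeq U\times \A^{(m-p)n}$ compatibly with the projection to $\J_p(U)$, which gives local triviality with fiber $\A^{(m-p)n}$. The dimension count $\dim \J_m(X)=n+mn=(m+1)n$ and the smoothness of $\J_m(X)$ then follow immediately from the smoothness of $X$ and of the fiber $\A^{mn}$ in the case $p=0$.

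The main obstacle is establishing the cartesian claim, which is the technical heart of the proposition. I would prove it from the functorial characterization of jet schemes in Definition~\ref{d:jet}: a $Z$-point of the fiber product $U\times_{Y}\J_m(Y)$ is the data of a morphism $Z\to U$ together with a morphism $Z\times_{\Spec\C}\Spec\C[t]/(t^{m+1})\to Y$ restricting on $Z=Z\times_{\Spec\C}\Spec\C$ to the composite $Z\to U\xrightarrow{\phi} Y$. Since $\phi$ is étale and the closed immersion $Z\hookrightarrow Z\times_{\Spec\C}\Spec\C[t]/(t^{m+1})$ is a nilpotent thickening, the infinitesimal lifting property of étale morphisms produces a \emph{unique} lift to a morphism $Z\times_{\Spec\C}\Spec\C[t]/(t^{m+1})\to U$, i.e.\ a unique $Z$-point of $\J_m(U)$ compatible with the given data. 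This shows the natural map $\J_m(U)\to U\times_Y \J_m(Y)$ is a bijection on functors of points, hence an isomorphism, completing the proof.
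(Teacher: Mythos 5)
Your argument is correct and mirrors the standard one in \cite[Cor.\,2.12]{EM}, which the paper cites here without reproducing a proof: reduce to $\A^n$ via a local \'etale chart, using that jet schemes are cartesian under \'etale base change (a consequence of the formal-\'etale lifting property along the nilpotent thickening $Z\hookrightarrow Z\times_{\Spec\C}\Spec\C[t]/(t^{m+1})$). The only slip is notational: pulling back $\J_m(\A^n)\to\A^n$ along $\phi$ gives $\J_m(U)\simeq U\times\A^{mn}$, and one should carry the level-$p$ identification along to conclude $\J_m(U)\simeq\J_p(U)\times\A^{(m-p)n}$ compatibly with $\pi_{U,m,p}$, rather than $U\times\A^{(m-p)n}$ as written.
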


For the other properties stated in Proposition \ref{p:geo}, the converse is not true in general. 
We refer to \cite[\S3]{Is} for counter-examples. 
We shall encounter other counter-examples in this 
paper in the setting of nilpotent orbit closures. 
In this setting, our main purpose is to study the irreducibility of jet schemes. 
The following lemma gives a necessary and sufficient condition for 
the converse of Proposition \ref{p:geo} to hold for irreducibility. 

We denote by $X_\reg$ the smooth part of $X$, and by $X_\sing$ its complement.

\begin{lemma} \label{l:irr} 
Assume that $X$ is an irreducible reduced scheme of finite type over $\C$, and let $m\in\N^*$.  
\begin{itemize}
\item[1)] $\bar{\pi_{X,m}^{-1}(X_\reg)}$ is an irreducible 
component of $\J_m(X)$. 
\item[2)] $\J_m(X)$ is irreducible if and only if $\pi_{X,m}^{-1}(X_\sing)$ 
is contained in $\bar{\pi_{X,m}^{-1}(X_\reg)}$. 
\item[3)] If $X$ is a complete intersection, then  
$\J_m(X)$ is irreducible if and only if $\dim \pi_{X,m}^{-1}(X_\sing) < \dim 
\pi_{X,m}^{-1}(X_\reg)$. 
\end{itemize}
\end{lemma}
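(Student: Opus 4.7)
My plan is to treat the three parts in order, since (2) and (3) both build on (1).

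For (1), I would use that $X$ is reduced and irreducible, so $X_\reg$ is a non-empty open subset of $X$. By Lemma~\ref{l:jet}(1), $\pi_{X,m}^{-1}(X_\reg)=\J_m(X_\reg)$, and since $X_\reg$ is smooth, Proposition~\ref{p:smo} shows that this is a smooth irreducible variety of dimension $(m+1)\dim X$. It is in particular an open irreducible subscheme of $\J_m(X)$, and a short general argument shows that the closure of any open irreducible subscheme is an irreducible component: if $\bar{U}\subseteq Z$ with $Z$ irreducible closed, then $U\cap Z$ is a non-empty open subset of the irreducible $Z$, hence dense, so $Z=\overline{U\cap Z}\subseteq \bar{U}$.

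For (2), I would start from the set-theoretic partition $\J_m(X)=\pi_{X,m}^{-1}(X_\reg)\sqcup\pi_{X,m}^{-1}(X_\sing)$ and note that $\pi_{X,m}^{-1}(X_\sing)$ is closed in $\J_m(X)$, since $X_\sing$ is closed in $X$. By (1), writing $Y:=\overline{\pi_{X,m}^{-1}(X_\reg)}$, the scheme $\J_m(X)$ is irreducible if and only if $\J_m(X)=Y$, which in turn is equivalent to the containment $\pi_{X,m}^{-1}(X_\sing)\subseteq Y$.

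For (3), the decisive extra ingredient is a lower bound on the dimension of every irreducible component of $\J_m(X)$. Using the explicit description of jet schemes from Remark~\ref{rk:aff}, if $X$ is locally cut out by $n-\dim X$ equations inside $\mathbb{A}^n$, then $\J_m(X)$ is locally cut out by $(m+1)(n-\dim X)$ equations inside $\J_m(\mathbb{A}^n)\simeq \mathbb{A}^{n(m+1)}$; by Krull's Hauptidealsatz this forces every irreducible component of $\J_m(X)$ to have dimension at least $(m+1)\dim X=\dim \pi_{X,m}^{-1}(X_\reg)$. Combined with (2), I would then argue: if $\dim \pi_{X,m}^{-1}(X_\sing)<(m+1)\dim X$, no component of $\J_m(X)$ can be contained in the proper closed subset $\pi_{X,m}^{-1}(X_\sing)$, so the only component is $Y$ and $\J_m(X)$ is irreducible; conversely, if $\J_m(X)$ is irreducible of dimension $(m+1)\dim X$, then $\pi_{X,m}^{-1}(X_\sing)$ is a proper closed subset, hence has strictly smaller dimension. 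The main technical point will be the dimension bound in (3), where the complete intersection hypothesis is essential to propagate the codimension count from $X$ to $\J_m(X)$ through the explicit affine description; the remaining steps are routine topological manipulations.
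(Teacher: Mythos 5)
Your proposal is correct and follows essentially the same route as the paper: the key inputs are the smoothness and irreducibility of $\pi_{X,m}^{-1}(X_\reg)\simeq\J_m(X_\reg)$ of dimension $(m+1)\dim X$ (via Proposition~\ref{p:smo}), the closed decomposition $\J_m(X)=\pi_{X,m}^{-1}(X_\sing)\cup\bar{\pi_{X,m}^{-1}(X_\reg)}$, and, for (3), the Krull-height lower bound on components of $\J_m(X)$ coming from the affine presentation in Remark~\ref{rk:aff}. The only difference is cosmetic: the paper simply cites Musta\c{t}\u{a}'s Proposition~1.4 for part (3), whereas you reconstruct that argument explicitly, which is exactly the content of the cited result.
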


In particular, if $\dim \pi_{X,m}^{-1}(X_\sing) \ge \dim \bar{\pi_{X,m}^{-1}(X_\reg)}$, 
then $\J_m(X)$ is reducible.

\begin{proof} 
Part (3) is proved in \cite[Prop\,.1.4]{Mu}, and its proof implies parts (1) and (2). 
More precisely, since $X_\reg$ is smooth and irreducible, $\bar{\pi_{X,m}^{-1}(X_\reg)}$ is an 
irreducible closed subset of $\J_m(X)$ of dimension $(m+1)\dim X$, cf.~Proposition \ref{p:smo}.
Then parts (1) and (2) follow easily from the fact that we have the decomposition
$$
\J_m(X) = \pi_{X,m}^{-1}(X_\sing) \cup \bar{\pi_{X,m}^{-1}(X_\reg)}
$$
of closed subsets, and that $\pi_{X,m}^{-1}(X_\sing)  \not \supset \bar{\pi_{X,m}^{-1}(X_\reg)}$.
\end{proof}


There are also subtle connections between the geometry of $\J_m(X)$, $m\ge 1$,  
and the singularities of $X$ which are important for us. In particular, 
according to \cite[Thm.\,0.1, Prop.\,1.5 and 4.12]{Mu}, 
we have:

\begin{thm}[{Musta\c{t}\u{a}}] \label{t:Mus} 
Let $X$ be an irreducible variety over $\C$. 
\begin{itemize} 
\item[1)] If $X$ is locally a complete intersection, then $\J_m(X)$ 
is irreducible for every $m\ge 1$ if and only if $X$ has rational singularities. 
\item[2)] 
If $X$ is locally a complete intersection and if $\J_m(X)$ is irreducible for 
some $m\ge 1$, then $\J_m(X)$ is also reduced. 
\item[3)]  If $X$ is locally a complete intersection, 
then $(\J_1(X))_\reg=\pi_{X,1}^{-1}(X_\reg)$. 
\end{itemize}
\end{thm}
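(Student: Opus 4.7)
The plan is to handle the three parts in order of increasing difficulty: first the tangent space computation in (3), which then supplies the generic reducedness needed for (2), and finally the main statement (1) via a log resolution together with motivic-integration style dimension estimates.

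For part (3), I would work locally and assume $X \subset \mathbb{A}^N$ is cut out by $r = N-n$ equations $f_1,\ldots,f_r$ whose Jacobian has rank $r$ exactly on $X_{\reg}$. By Remark~\ref{rk:aff}, $\J_1(X)$ is cut out in $\mathbb{A}^{2N}$ by the $2r$ polynomials $f_k$ and $f_k^{(1)} = \sum_i (\partial f_k/\partial x_i^{(0)})\, x_i^{(1)}$, so $\J_1(X)$ is itself locally a complete intersection of dimension $2n$. Smoothness on $\pi_{X,1}^{-1}(X_{\reg})$ is already given by Proposition~\ref{p:smo}. Conversely, at a point $\gamma=(x,v)\in \J_1(X)$ with $x \in X_{\sing}$, the Jacobian of $(f_k, f_k^{(1)})$ with respect to $(x^{(0)}, x^{(1)})$ has a block lower-triangular shape with the matrix $J(x)=(\partial f_k/\partial x_i)(x)$ in both diagonal blocks, so its rank equals $2\,\mathrm{rk}\,J(x) < 2r$. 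The tangent space at $\gamma$ therefore has dimension strictly greater than $2n$, and since $\J_1(X)$ is equidimensional of dimension $2n$, the point $\gamma$ is singular. This yields $(\J_1(X))_{\reg} = \pi_{X,1}^{-1}(X_{\reg})$.

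For part (2), the same local description shows that $\J_m(X)$ is locally cut out in $\mathbb{A}^{(m+1)N}$ by $(m+1)r$ equations, hence it is locally a complete intersection; in particular it satisfies Serre's condition $S_1$. Reducedness then reduces to generic reducedness ($R_0$). If $\J_m(X)$ is irreducible, Lemma~\ref{l:irr} together with Proposition~\ref{p:smo} applied to the smooth variety $X_{\reg}$ gives a dense smooth open subset $\pi_{X,m}^{-1}(X_{\reg}) = \J_m(X_{\reg})$, so the unique generic point of $\J_m(X)$ is reduced, and (2) follows.

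The main work is in part (1). Given the lci hypothesis, I would fix a log resolution $f : Y \to X$ and encode the singularities through its discrepancy divisor $K_{Y/X} = \sum a_i E_i$. The plan is to follow the Denef--Loeser change-of-variables formula for jet and arc schemes: for each $m$, stratify $\J_m(X)$ by the contact orders $(e_i)$ of jets with the exceptional divisors $E_i$, compute the dimension of each stratum inside $\J_m(Y)$ where Proposition~\ref{p:smo} gives an exact answer, and track how the discrepancies contribute a defect $\sum a_i e_i$ to the codimension once one pushes strata back down to $\J_m(X)$. Irreducibility of $\J_m(X)$ for every $m$ then translates, via Lemma~\ref{l:irr}(3), into every nontrivial contact stratum having strictly positive codimension, which in the lci setting is equivalent (by Elkik's criterion identifying canonical singularities with rational singularities for an lci) to all discrepancies being nonnegative, i.e.\ to $X$ having rational singularities. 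The delicate direction is ``rational $\Rightarrow$ irreducible for every $m$'': one must bound $\dim \pi_{X,m}^{-1}(X_{\sing})$ uniformly in $m$ using the discrepancy estimates, and controlling the cylindrical stratification of $\J_m(Y)$ through the change-of-variables is where I expect the main obstacle to lie.
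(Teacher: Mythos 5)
The paper gives no proof here; Theorem~\ref{t:Mus} is quoted directly from Musta\c{t}\u{a} with the citation \cite[Thm.\,0.1, Prop.\,1.5 and 4.12]{Mu}. Your proposal is therefore an attempt to reconstruct Musta\c{t}\u{a}'s argument, which is far beyond what the paper undertakes.

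The main gap is in part~(3). You assert that $\J_1(X)$ is ``itself locally a complete intersection of dimension $2n$'' because it is cut out by $2r$ equations, and then deduce singularity from a large tangent space by invoking equidimensionality of dimension $2n$. But being cut out by $2r$ equations in $\mathbb{A}^{2N}$ only gives the lower bound $\dim \ge 2n$ on components; a priori $\pi_{X,1}^{-1}(X_\sing)$ may contain components of strictly larger dimension, in which case $\J_1(X)$ is neither equidimensional of dimension $2n$ nor locally a complete intersection, and a point with a large tangent space sitting on a large component need not be singular. Proving that this does not occur is precisely part of what \cite[Prop.\,4.12]{Mu} establishes, so you are assuming what must be proved. (A smaller slip: the rank of the block lower-triangular Jacobian does not ``equal $2\,\mathrm{rk}\,J(x)$''; that is only a lower bound. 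What you actually need, namely $\mathrm{rk} \le r + \mathrm{rk}\,J(x) < 2r$ when $\mathrm{rk}\,J(x) < r$, is true and elementary, since the top $r$ rows span at most $\mathrm{rk}\,J(x)$ dimensions.) Part~(2) survives this concern because the assumed irreducibility of $\J_m(X)$ forces $\J_m(X) = \bar{\pi_{X,m}^{-1}(X_\reg)}$, whence dimension $(m+1)n$ and the complete-intersection property, so the $S_1$ plus $R_0$ argument is sound. Part~(1) correctly names Musta\c{t}\u{a}'s tools (change of variables in the Denef--Loeser style, discrepancies, Elkik's equivalence of rational and canonical singularities for local complete intersections), but as written it is a plan rather than a proof: the substance of the theorem is the dimension estimates for contact strata, uniform in $m$, which you explicitly defer, and both directions of the equivalence require them.
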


Let us give an easy counter-example to the converse implication of Proposition \ref{p:geo} for normality. 
This example turns out to be a particular case of a more general 
situation that will be studied in Proposition \ref{p:norm}.

\begin{ex} \label{ex:sl2} 
Let $X$ be as in Example \ref{ex:aff}. Then $X$ is a complete intersection and 
it is normal since the singular locus is reduced to $\{0\}$ which has codimension 
2 in $X$. Next, it is not difficult to verify that $\J_1(X)$ is irreducible, reduced and 
that it is a complete intersection. But $\J_1(X)$ is not 
normal. Indeed, by Theorem \ref{t:Mus},(3), 
$$(\J_1(X))_\sing=\pi_{X,1}^{-1}(\{0\}) \simeq \{0\}\times \C^3.$$ 
Hence, the singular locus of $\J_1(X)$ has codimension 1 in $\J_1(X)$ 
since $\dim \J_1(X)=2\dim X = 4$.    
\end{ex}

\subsection*{Group actions} 
Let $G$ be a connected algebraic group, acting on a variety $X$, and $m\in\N$. 
Denote by 
$$
\rho\colon G\times X\to X, \quad (g,x)\mapsto g.x
$$ 
the corresponding action. As stated in Lemma \ref{l:jet}, the morphism 
$$
\rho_m : \J_m(G\times X)\simeq \J_m(G) \times \J_m(X) \to 
\J_m(X)$$ 
defines an action of $\J_m(G)$ on $\J_m(X)$.

Recall that we embed $X$ into $\J_m(X)$ through $\iota_{X,m}$. 
For $x \in X$, let us denote by $G^x$ the stabilizer of $x$ in $G$, and for $m\in \N$, 
we denote by $\J_m(G)^x$ its stabilizer in $\J_m(G)$. The following results are 
probably standard. Since we have not found any reference, we shall include their proofs. 

\begin{lemma}  \label{l:gpe}
Let $x \in X$. Then, 
$$\J_m(G).x= \J_m(G.x), \quad 
\J_m(G^x) = \J_m(G)^x \quad  \text{ and }\quad 
\pi_{\bar{G.x},m}^{-1}(G.x) =\J_m(G.x).$$ 
\end{lemma}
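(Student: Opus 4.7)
The three equalities are of different natures: (3) is purely topological, (1) uses smoothness of the orbit map, and (2) is a functorial consequence of how $G^x$ is defined as a fiber.

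\textbf{For (3).} Since $G.x$ is open in its closure $\bar{G.x}$ (orbits of an algebraic group action on a variety are always locally closed, hence open in their closures), Lemma~\ref{l:jet},(1) applied to the open immersion $G.x \hookrightarrow \bar{G.x}$ yields $\J_m(G.x) = \pi_{\bar{G.x},m}^{-1}(G.x)$.

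\textbf{For (1).} The orbit map $\mu_x : G \to G.x$, $g \mapsto g.x$, identifies $G.x$ with the geometric quotient $G/G^x$, so $\mu_x$ is smooth and surjective. By Lemma~\ref{l:jet},(4), the induced morphism $\J_m(\mu_x) : \J_m(G) \to \J_m(G.x)$ is smooth and surjective. On the other hand, by construction of the action $\rho_m$ and the inclusion $\iota_{\bar{G.x},m}$, the map $\J_m(\mu_x)$ coincides with $\gamma \mapsto \gamma . x$, whose image is exactly $\J_m(G).x$. Surjectivity of $\J_m(\mu_x)$ then gives $\J_m(G).x = \J_m(G.x)$.

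\textbf{For (2).} The stabilizer $G^x$ fits into the cartesian square with $\mu_x : G \to X$ on the right and the inclusion $\{x\} \hookrightarrow X$ on the bottom. Using the functorial definition of jet schemes (Definition~\ref{d:jet}), for any $\C$-scheme $T$ one has
\begin{align*}
\Hom(T, \J_m(G^x)) &= \Hom\bigl(T \times_{\C} \Spec\C[t]/(t^{m+1}),\, G^x\bigr) \\
&= \bigl\{ \gamma \in \Hom\bigl(T \times_{\C} \Spec\C[t]/(t^{m+1}),\, G\bigr) \mid \mu_x \circ \gamma \equiv x \bigr\} \\
&= \bigl\{ \tilde\gamma \in \Hom(T, \J_m(G)) \mid \J_m(\mu_x) \circ \tilde\gamma \equiv \iota_{X,m}(x) \bigr\} \\
&= \Hom(T, \J_m(G)^x),
\end{align*}
where the last equality uses that $\J_m(\mu_x)(\tilde\gamma) = \tilde\gamma . x$ (by the same identification as in (1)), so the condition expresses exactly that $\tilde\gamma$ fixes the constant jet $\iota_{X,m}(x)$. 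Yoneda's lemma gives $\J_m(G^x) = \J_m(G)^x$.

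The only genuinely delicate point is (2): one must keep track of the base changes in the functor of points and of the fact that the stabilizer is computed with respect to $\iota_{X,m}(x)$, not an arbitrary lift of $x$. Once this bookkeeping is set up, everything follows from the defining universal property of jet schemes together with Lemma~\ref{l:jet}.
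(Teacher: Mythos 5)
Your proof is correct and follows essentially the same route as the paper: part (3) from Lemma~\ref{l:jet},(1) because orbits are open in their closures, and part (1) from smoothness of the orbit map via Lemma~\ref{l:jet},(4). For part (2) you give a single functor-of-points computation using the cartesian square defining $G^{x}$ as a fiber, whereas the paper proves the two inclusions separately (one by applying part (1) to the group $G^{x}$, the other by unwinding $\rho_{m}(\gamma, x)=x$ on the unique point of $\Spec\C[t]/(t^{m+1})$); your Yoneda phrasing is a cleaner way to package the same content, since it makes the scheme-theoretic factorization through $G^{x}$ explicit rather than checking it at the underlying point, but it is not a genuinely different argument.
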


\begin{proof} 
The morphism $G \times \{x\} \to G.x, \, (g,x) \mapsto g.x$ 
is a submersion at all points of $G \times \{x\}$. 
Hence, according to \cite[Ch.\,III, Prop.\,10.4]{Ha}, it is a smooth morphism 
onto $G.x$. So, by Lemma~\ref{l:jet},(4), the induced morphism $\J_m(G) 
\times \{x\} \to \J_m(G.x)$ is also smooth and surjective. 
Consequently, we have the first equality $\J_m(G).x=\J_m(G.x)$. 

By applying the first equality to the algebraic group $G^x$, we get 
$\J_m(G^x).x=\J_m(G^x.x)$, and whence the inclusion $\J_m(G^x)\subset \J_m(G)^x$. 

Conversely, let $\gamma :\Spec\,\C[t]/(t^{m+1}) \to G$ be an element of $\J_m(G)^x$. 
Then $\rho_m(\gamma,x)=x$, and hence viewing $x$ as a morphism $x : \Spec\,\C[t]/(t^{m+1}) \to X$,
we have
$$\rho(\gamma(\tau),x(\tau))=x(\tau)$$ 
where $\tau$ is the unique element of $\Spec\,\C[t]/(t^{m+1})$. 
Thus $\gamma(\tau)\in G^x$ and $x(\tau)=x$. So we have $\gamma \in \J_m(G^x)$,
and the second equality follows.

The third equality is a direct consequence of Lemma \ref{l:jet},(1) since $G.x$ 
is open in its closure. 
\end{proof}

Let $\g$ be the Lie algebra of $G$. We
consider now the adjoint action of $G$ on $\g$. 
For the results we present here, we refer the reader to \cite[Appendix]{Mu}. 
Denote by 
$$\g_m := \g\otimes_\C \C[t]/(t^{m+1})$$ 
the {\em generalized Takiff Lie algebra} whose Lie bracket 
is given by 
$$[u \otimes x(t), v \otimes y(t)] =[u,v] \otimes x(t)y(t) \qquad (u,v \in \g,\; 
x(t), y(t) \in\C[t]/(t^{m+1}) ).$$
As Lie algebras, we have 
$$\J_m(\g) \simeq\g_m \simeq {\rm Lie}(\J_m(G)).$$ 
In the sequel, when there is no confusion, we shall use the notations 
$\g_m$ and $G_m$ for $\J_m(\g)$ and $\J_m(G)$ respectively. If $\a$ is a Lie subalgebra of 
$\g$, then $\J_m(\a)\simeq \a_m$ is a Lie subalgebra of $\g_m$. In particular, 
for $x\in\g$, we have $(\g_m)^x=(\g^x)_m$, where for any subalgebra $\m$ 
of $\g_k$, with $k\ge 0$, $\m^x$ stands for the centralizer of $x$ in $\m$. 

We can identify $\g_m$ with $\g^{m+1} \simeq \J_m(\g)$ as 
a variety through the map
$$\g^{m+1} \to \g_m, \quad (x_0,x_1,\ldots,x_m)\mapsto x_0 + x_1\otimes t +\cdots 
+ x_m \otimes t^m.$$
Let $G_m$ be a connected algebraic group whose Lie algebra is $\g_m$. 
Let $\C[\g_m]$ be the coordinate ring of $\g_m$, and let $\C[\g_m]^{G_m} $ 
be the subring of $G_m$-invariants. We conclude in this section with  
the following result. 

\begin{lemma} \label{l:inv} 
For $f\in \C[\g]^G$, 
the polynomials $f^{(0)},\ldots,f^{(m)}$, as defined in Remark \ref{rk:aff}, are 
elements of $\C[\g_m]^{G_m}$.  
\end{lemma}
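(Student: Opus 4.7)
The plan is to derive the invariance from the functoriality of $\J_m$ applied to the $G$-invariance relation $f \circ \rho = f \circ \mathrm{pr}_2$, where $\rho \colon G \times \g \to \g$ is the adjoint action and $\mathrm{pr}_2$ is the projection onto $\g$.

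First, I would identify the polynomials $f^{(0)}, \ldots, f^{(m)}$ as the components of the induced morphism $f_m \colon \J_m(\g) \to \J_m(\C)$. On an $m$-jet $\gamma(t) = x_0 + x_1 t + \cdots + x_m t^m$, the morphism $f_m$ sends $\gamma$ to $f(\gamma(t)) \in \C[t]/(t^{m+1})$, which by equation (\ref{eq;aff}) equals $\sum_{j=0}^{m} f^{(j)}(\boldsymbol{\gamma})\, t^j$. Under the canonical isomorphism $\J_m(\C) \simeq \C^{m+1}$ whose coordinates are the successive $t^j$-coefficients, the $j$-th component of $f_m$ is therefore exactly $f^{(j)}$.

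Next, the $G$-invariance of $f$ is the identity $f \circ \rho = f \circ \mathrm{pr}_2$ of morphisms $G \times \g \to \C$. Applying $\J_m$ and using the canonical isomorphism $\J_m(G \times \g) \simeq G_m \times \g_m$ from Lemma \ref{l:jet}(2), we obtain $f_m \circ \rho_m = f_m \circ (\mathrm{pr}_2)_m$ as morphisms $G_m \times \g_m \to \C^{m+1}$. By Lemma \ref{l:jet}(3), the morphism $\rho_m$ is precisely the action of $G_m$ on $\g_m$, so this identity says exactly that $f_m$ is $G_m$-invariant. Equating $t^j$-coefficients on both sides of $f_m(\gamma \cdot x) = f_m(x)$ for $(\gamma,x) \in G_m \times \g_m$ then yields the $G_m$-invariance of each $f^{(j)}$.

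I do not foresee any serious obstacle: the argument is essentially a bookkeeping exercise in the universal property of jet schemes, and the only substantive verifications are that the coordinates of $f_m$ are the $f^{(j)}$ and that $\rho_m$ coincides with the $G_m$-action on $\g_m$, both of which are direct. If one prefers to bypass the categorical language, the same conclusion can be reached by Taylor-expanding both sides of $f\bigl(g(t)\cdot x(t)\bigr) = f\bigl(x(t)\bigr)$ in $\C[t]/(t^{m+1})$ for $g(t) \in G_m$ and $x(t) \in \g_m$, and matching coefficients of $t^j$ for $j = 0, \ldots, m$.
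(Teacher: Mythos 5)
Your argument is correct. The paper dispatches this lemma in a single sentence, asserting that it is ``straightforward from the explicit description of the polynomials $f^{(0)},\ldots,f^{(m)}$'' — which corresponds to the direct Taylor-expansion verification you mention at the end. Your primary route via functoriality of $\J_m$ applied to the identity $f\circ\rho = f\circ\mathrm{pr}_2$ is a clean, conceptually tidier repackaging of the same computation: identifying the $f^{(j)}$ as the coordinate components of $f_m\colon \g_m\to \J_m(\C)\simeq\C^{m+1}$ and then pushing the $G$-invariance relation through $\J_m$ using Lemma~\ref{l:jet}(2)--(3) makes the bookkeeping automatic. Both routes are valid; the functorial one has the advantage of requiring no coordinates and of making transparent that $G_m$-invariance of each $f^{(j)}$ is nothing more than $\J_m$ applied to $G$-invariance of $f$.
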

\begin{proof}
This is straightforward from the explicit description of the polynomials 
$f^{(0)},\ldots ,f^{(m)}$ given in Remark \ref{rk:aff}.
\end{proof}


\section{Nilpotent orbit closures} \label{S:nil}
From now on, we let $G$ to be a connected reductive algebraic group 
over $\C$, $\g$ its Lie algebra and $\mathcal{N}(\g)$ the nilpotent 
cone of $\g$. Recall that $\mathcal{N}(\g)$ is the 
subscheme of $\g$ defined by the augmentation ideal of $\C[\g]^G$, and that  
$\mathcal{N}(\g)=\bar{\O_\reg}$ where $\O_\reg$ is the regular nilpotent 
orbit of $\g$ (cf.~Introduction).  
As mentioned in the Introduction, we are interested in this paper in 
the irreducibility of jet schemes of the closure of nilpotent orbits.

Recall that for an arbitrary nilpotent orbit $\O$ of $\g$, the singular locus of $\bar{\O}$
is $\bar{\O} \setminus \O$ and that 
${\rm codim}_{\bar{\O}}(\bar{\O} \setminus \O)\ge 2$ (cf.~Introduction). 
%
%

\begin{defi}  \label{d:gO}
Let $\O$ be a nonzero nilpotent orbit of $\g$. 
Define $\g_\O$ to be the smallest semisimple ideal of $\g$ 
containing $\O$. 
\end{defi}

More precisely, if $\g\simeq \z(\g) \times \s_1\times\cdots\times\s_m$, 
with $\z(\g)$ the center of $\g$ and $\s_1,\ldots,\s_m$ the simple 
factors of $\g$, then $\O=\O_1\times \cdots\times\O_m$, with $\O_i$ a nilpotent 
orbit of $\s_i$ for $i=1,\ldots,m$,  
and 
$$
\g_\O= \s_{i_1}\times\cdots\times \s_{i_k}
$$ 
where $\{i_1,\ldots,i_k\}$ is the set of integers $j\in \{1,\ldots,m\}$ 
such that $\O_j$ is nonzero. 
In particular, if $\O$ is zero, then $\g_\O=0$, and if $\O$ is nonzero 
and $\g$ is simple, then $\g_\O=\g$. 
 
For $\O$ a nilpotent orbit of $\g$, we denote by  
$\I_{\bar{O}}$ the defining ideal of $\bar{\O}$ in $\g_{\O}$. 
Thus,
$$\bar{\O}={\rm Spec}\,\C[\g_\O]/\I_{\bar{\O}}.$$
Recall that $\bar{\O}$ is conical, so $\I_{\bar{\O}}$ is a homogeneous ideal.

\begin{lemma} \label{l:deg2}
Let $\O$ be a nonzero nilpotent orbit of $\g$.  
If $f_1,\ldots,f_s$ are homogeneous generators of $\I_{\bar{\O}}$,  
then the minimum degree of the $f_i$'s is exactly 2. 
\end{lemma}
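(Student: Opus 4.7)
Since $\bar{\O}$ is conical, $\I_{\bar{\O}}$ is a graded ideal, so the minimum degree of a homogeneous generating family coincides with the minimum degree of a nonzero homogeneous element of $\I_{\bar{\O}}$. The plan therefore reduces to establishing two complementary facts: (a) $\I_{\bar{\O}}$ contains no nonzero linear form, and (b) $\I_{\bar{\O}}$ contains a nonzero quadratic form.

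For (a), suppose for contradiction that $\ell\in\g_\O^*$ is a nonzero linear form vanishing on $\bar{\O}$, and let $V\subseteq\g_\O$ be the linear span of $\bar{\O}$. Then $V\subseteq\ker\ell$ is a proper subspace of $\g_\O$. Since $\bar{\O}$ is stable under the adjoint action of the connected group $G$, so is $V$, and therefore $V$ is an ideal of $\g$ contained in $\g_\O$. But $\g_\O$ decomposes as a direct sum of simple ideals of $\g$, so any ideal of $\g$ contained in $\g_\O$ is a sub-sum of these simple factors. Since $V\supseteq\O$ and $\g_\O$ is by definition the smallest semisimple ideal of $\g$ containing $\O$, the only such sub-sum is $\g_\O$ itself, contradicting the properness of $V$.

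For (b), consider $q(x):=\mathrm{tr}\bigl((\ad x)^2\bigr)$, the quadratic form attached to the Killing form of $\g_\O$ (with $\ad$ the adjoint action of $\g_\O$ on itself). Since $\g_\O$ is semisimple, its Killing form is nondegenerate by Cartan's criterion, hence $q$ is a nonzero homogeneous polynomial of degree $2$ on $\g_\O$. For every nilpotent $x\in\g_\O$, the operator $\ad x$ is nilpotent, hence so is $(\ad x)^2$, and consequently $q(x)=0$. Thus $q$ vanishes on $\mathcal{N}(\g_\O)\supseteq\bar{\O}$, exhibiting a nonzero degree-$2$ element of $\I_{\bar{\O}}$.

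The only delicate point is the argument for (a): one must combine the fact that a $G$-stable linear subspace of $\g_\O$ is automatically an ideal of $\g$ (which uses connectedness of $G$) with the semisimple decomposition of $\g_\O$ to identify ideals of $\g$ inside $\g_\O$ with sub-sums of simple factors, and then invoke the minimality built into Definition \ref{d:gO}. Once this is in place, (b) is immediate from the standard fact that the Killing form of a semisimple Lie algebra vanishes on its nilpotent cone.
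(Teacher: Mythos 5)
Your proposal is correct and follows essentially the same two-step strategy as the paper: rule out linear forms by a $G$-invariance argument, and exhibit the Killing form (the Casimir) as a nonzero degree-$2$ element of $\I_{\bar\O}$. The only cosmetic difference is that the paper first reduces to $\g=\g_\O$ simple and then observes that a proper nonzero $G$-invariant subspace contradicts simplicity, whereas you stay in the semisimple setting and argue that the span of $\bar\O$ is a semisimple ideal containing $\O$ and strictly contained in $\g_\O$, contradicting the minimality in Definition~\ref{d:gO}; this is the same idea, and arguably handles the product case a bit more transparently.
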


\begin{proof}
By the above discussion, $\O$ is a product of nilpotent orbits. We may therefore assume that 
$\g=\g_\O$ is simple. 

Assume that for some $i\in\{1,\ldots,s\}$, $\deg f_i=1$. 
A contradiction is expected. 
Let $\mathcal{V}$ be the intersection of all the hyperplanes $\mathcal{H}_{g}$, 
$g\in G$, defined by the linear form 
$$
g.f_i \colon \g \to \C,\quad x  \longmapsto f_i(g^{-1}(x)).
$$ 
Since $\bar{\O}$ is $G$-invariant and is contained in the zero locus of $f_i$, 
$\bar{\O}$ is contained in $\mathcal{V}$.  
Thus $\mathcal{V}$ is a nonzero $G$-invariant subspace of $\g$ which is 
different from $\g$ (because $\mathcal{V}$ is contained in the hyperplane $\mathcal{H}_{1_{G}}$), 
whence the contradiction since $\g$ is simple.   

The Casimir element, $x \mapsto \langle x,x\rangle$ with $\langle \,,\rangle$ 
the Killing form of $\g$, vanishes on the nilpotent cone of $\g$. Hence it is 
contained in ${\mathcal I}_{\bar{\O}}$. Since it has degree 2, the minimal 
degree of the $f_i$'s is exactly 2. 
\end{proof}

To determine the reducibility of $\J_m(\bar{\O})$ for $\O$ a (nonzero) nilpotent orbit of $\g$, 
we introduce the two sufficient conditions below. 

\begin{defi} \label{d:RC}
Let $\O$ be a nilpotent orbit of $\g$. 
\begin{itemize} 
\item[1)] We say that {\em $\O$ verifies \RC$_1$} if $\pi_{\bar{\O},1}^{-1}(0)$ 
is not contained in the closure of $\pi_{\bar{\O},1}^{-1}(\O)$.   
\item[2)] Let $m\in\N^*$. 
We say that {\em $\O$ verifies \RC$_2(m)$} if for some nilpotent orbit $\O'$ 
contained in $\bar{\O}\setminus \O$, we have $\dim \pi_{\bar{\O},m}^{-1}(\O') 
\ge \dim \pi_{\bar{\O},m}^{-1}(\O) =(m+1)\dim \O$. 
\end{itemize}
\end{defi}

The following Lemma directly results from Lemma \ref{l:irr},(2). 

\begin{lemma} \label{l:RC} 
Let $\O$ be a nilpotent orbit of $\g$. 
\begin{itemize} 
\item[1)] If $\O$ verifies \RC$_1$, then $\J_1(\bar{\O})$ is reducible.    
\item[2)] If $\O$ verifies \RC$_2(m)$ for some $m\in\N^*$, 
then $\J_m(\bar{\O})$ is reducible.  
\end{itemize}
\end{lemma}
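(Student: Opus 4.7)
The plan is to deduce both parts from Lemma \ref{l:irr} applied to $X=\bar{\O}$, using the fact, recalled at the start of Section \ref{S:nil}, that the singular locus of $\bar{\O}$ is $\bar{\O}\setminus\O$, so that $(\bar{\O})_{\reg}=\O$ and $(\bar{\O})_{\sing}=\bar{\O}\setminus\O$.

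For part (1), note first that the zero orbit lies in $\bar{\O}\setminus\O$ (one may assume $\O$ nonzero, since otherwise $\bar{\O}=\{0\}$ and \RC$_1$ is vacuous), because $\bar{\O}$ is conical. Hence $\pi_{\bar{\O},1}^{-1}(0)\subset\pi_{\bar{\O},1}^{-1}((\bar{\O})_{\sing})$, and the hypothesis \RC$_1$ says exactly that the left-hand side is not contained in $\overline{\pi_{\bar{\O},1}^{-1}(\O)}$. A fortiori $\pi_{\bar{\O},1}^{-1}((\bar{\O})_{\sing})\not\subset\overline{\pi_{\bar{\O},1}^{-1}((\bar{\O})_{\reg})}$, and Lemma \ref{l:irr}(2) then yields the reducibility of $\J_1(\bar{\O})$.

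For part (2), I would argue by contradiction. Suppose $\J_m(\bar{\O})$ is irreducible. Lemma \ref{l:irr}(1) ensures that $\overline{\pi_{\bar{\O},m}^{-1}(\O)}$ is an irreducible component of $\J_m(\bar{\O})$ of dimension $(m+1)\dim\O$, so irreducibility of $\J_m(\bar{\O})$ forces the equality $\J_m(\bar{\O})=\overline{\pi_{\bar{\O},m}^{-1}(\O)}$. Then $\pi_{\bar{\O},m}^{-1}(\O')$ sits inside this irreducible closure while being disjoint from $\pi_{\bar{\O},m}^{-1}(\O)$, since $\O\cap\O'=\emptyset$. By Lemma \ref{l:jet}(1), $\pi_{\bar{\O},m}^{-1}(\O)=\J_m(\O)$ is open in $\overline{\pi_{\bar{\O},m}^{-1}(\O)}$, so $\pi_{\bar{\O},m}^{-1}(\O')$ is contained in the complementary proper closed subset. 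Being a proper closed subset of an irreducible variety of dimension $(m+1)\dim\O$, this complement has strictly smaller dimension, contradicting the hypothesis $\dim\pi_{\bar{\O},m}^{-1}(\O')\ge(m+1)\dim\O$ provided by \RC$_2(m)$.

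There is no real obstacle here: the proof is essentially a bookkeeping translation of the irreducibility criterion of Lemma \ref{l:irr} into the nilpotent orbit setting, and the two conditions \RC$_1$ and \RC$_2(m)$ were defined precisely to make this translation transparent. The only ingredients beyond Lemma \ref{l:irr} are the identifications $(\bar{\O})_{\reg}=\O$, $(\bar{\O})_{\sing}=\bar{\O}\setminus\O$, and $\pi_{\bar{\O},m}^{-1}(\O)=\J_m(\O)$ from Lemma \ref{l:jet}(1).
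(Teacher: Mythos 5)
Your proof is correct and takes essentially the same route as the paper, which simply remarks that the lemma "directly results from Lemma \ref{l:irr},(2)" (together with the dimension remark stated right after that lemma). Your write-up spells out the small details the paper leaves implicit — in particular the topological fact that a proper closed subset of an irreducible Noetherian space has strictly smaller dimension, which is exactly what underlies the paper's "In particular, if $\dim \pi_{X,m}^{-1}(X_\sing) \ge \dim \bar{\pi_{X,m}^{-1}(X_\reg)}$, then $\J_m(X)$ is reducible" remark.
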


The zero nilpotent orbit verifies neither \RC$_1$ nor \RC$_2(m)$ for 
$m\in\N^*$. Since $\J_m(\mathcal{N}(\g))$ is irreducible for every $m\in\N^*$ 
(cf.~Introduction), the same goes for the regular nilpotent orbit according to 
Lemma~\ref{l:RC}. 

In view of the conditions above, let us study the zero fiber of $\pi_{\bar{\O},1}\colon 
\J_1(\bar{\O}) \to \bar{\O}$. 
As in Section~\ref{S:jet}, we identify $(\g_\O)_m$ with $(\g_\O)^{m+1}=
\underbrace{\g_\O\times \cdots\times\g_\O}_{(m+1) \text{ times}}$. 

\begin{lemma} \label{l:zero}
Let $\O$ be a nonzero nilpotent orbit of $\g$, and $m\in \N^*$. 
\begin{itemize} 
\item[1)] We have $\pi_{\bar{\O},1}^{-1}(0) \simeq \{0\}\times \g_\O$. 
In particular, $\dim \pi_{\bar{\O},1}^{-1}(0) = \dim\g_\O$. 
\item[2)] If $m\ge 2$, then 
$\dim \pi_{\bar{\O},m}^{-1}(0) \ge \dim \J_{m-2}(\bar{\O}) +\dim\g_\O 
\ge m \dim \O +{\rm codim}_{\g_\O}(\O) $. 
\end{itemize}
\end{lemma}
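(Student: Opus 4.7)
For part~(1), my plan is to compute $\pi_{\bar{\O},1}^{-1}(0)$ directly from the equations of Remark~\ref{rk:aff}. Viewing $\J_1(\bar{\O})$ inside $\J_1(\g_\O) \simeq \g_\O \times \g_\O$ with coordinates $(y_0, y_1)$, its defining ideal is generated by the pairs $\bigl(f_k(y_0),\, \mathrm{d}f_k|_{y_0}(y_1)\bigr)$ as $f_k$ runs over homogeneous generators of $\mathcal{I}_{\bar{\O}}$. Intersecting with $\{y_0 = 0\}$ forces $f_k(0) = 0$ and $\mathrm{d}f_k|_0(y_1) = 0$; Lemma~\ref{l:deg2} gives $\deg f_k \ge 2$, so both relations hold identically and impose no condition on $y_1 \in \g_\O$. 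Hence $\pi_{\bar{\O},1}^{-1}(0) = \{0\} \times \g_\O$ as schemes.

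For part~(2), my plan is to exhibit a closed embedding
\[
\sigma \colon \J_{m-2}(\bar{\O}) \times \g_\O \;\hookrightarrow\; \pi_{\bar{\O},m}^{-1}(0),
\qquad (\gamma, v) \longmapsto t\,\gamma(t) + v\,t^m.
\]
If $\gamma(t) = \sum_{j=0}^{m-2} z_j t^j$, the image has coordinates $(0, z_0, z_1, \ldots, z_{m-2}, v) \in \g_\O^{m+1}$, from which $(\gamma, v)$ is recovered; since the corresponding map on coordinate rings is a surjective linear map, $\sigma$ is a closed embedding. To see the image lies in $\J_m(\bar{\O})$, I would Taylor-expand any homogeneous generator $f_k$ of degree $d_k \ge 2$ (Lemma~\ref{l:deg2}) around $t\gamma(t)$ in the direction $v t^m$. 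Using the homogeneity identity $D^s f_k|_{t u} = t^{d_k - s}\, D^s f_k|_u$ and the fact that $(v t^m)^{\otimes s} = t^{ms} v^{\otimes s}$, one gets
\[
f_k\bigl(t\gamma(t) + v t^m\bigr) \;=\; \sum_{s=0}^{d_k}\, \frac{t^{\,d_k + s(m-1)}}{s!}\, D^s f_k|_{\gamma(t)}\bigl(v^{\otimes s}\bigr).
\]
The $s=0$ term equals $t^{d_k}\, f_k(\gamma(t))$; since $\gamma \in \J_{m-2}(\bar{\O})$ gives $f_k(\gamma(t)) \equiv 0 \bmod t^{m-1}$, its $t$-valuation is at least $d_k + (m-1) \ge m+1$. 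For $s \ge 1$, the exponent satisfies $d_k + s(m-1) \ge 2 + (m-1) = m+1$. Thus every term vanishes modulo $t^{m+1}$, confirming $\sigma$ lands in the zero fiber and yielding $\dim \pi_{\bar{\O},m}^{-1}(0) \ge \dim \J_{m-2}(\bar{\O}) + \dim \g_\O$.

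For the second inequality, since $\O$ is smooth of dimension $\dim \O$ and open in $\bar{\O}$, Proposition~\ref{p:smo} gives $\dim \J_{m-2}(\O) = (m-1)\dim \O$, and Lemma~\ref{l:jet}(1) makes $\J_{m-2}(\O)$ open in $\J_{m-2}(\bar{\O})$, so $\dim \J_{m-2}(\bar{\O}) \ge (m-1)\dim \O$. Combining with $\dim \g_\O = \dim \O + \mathrm{codim}_{\g_\O}(\O)$ gives the stated bound.

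The main delicate point is the choice of shift in the construction of $\sigma$: multiplying $\gamma(t)$ by $t$ (rather than $t^2$, which would have been a more obvious guess because then $t^2 \gamma(t)$ is automatically in $\J_m(\bar{\O})$) is precisely what leaves the $m$-th coefficient $v$ genuinely unconstrained, and the degree-$\ge 2$ hypothesis from Lemma~\ref{l:deg2} is exactly what forces every Taylor term in the expansion above to have $t$-valuation $\ge m+1$.
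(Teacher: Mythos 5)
Your proof is correct and takes essentially the same approach as the paper: part~(1) is the identical computation from Remark~\ref{rk:aff} combined with Lemma~\ref{l:deg2}, and in part~(2) your map $\sigma \colon (\gamma,v) \mapsto t\gamma(t) + v t^m$ is exactly the paper's embedding $((x_1,\dots,x_{m-1}),x_m)\mapsto(0,x_1,\dots,x_{m-1},x_m)$ written in $t$-language. The only cosmetic difference is that you verify $\sigma$ lands in $\J_m(\bar{\O})$ via a single Taylor expansion, whereas the paper does it in two steps (first drop the $t^m x_m$ term mod $t^{m+1}$, then use homogeneity to pull out $t^{d_i}$); these are the same computation organized differently.
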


Part (2) of Lemma~\ref{l:zero} remains valid for an affine variety in $\C^{n}$ defined 
by homogeneous polynomials of degree at least $2$. The special case where all the 
generators have the same degree is treated in \cite[Prop.\,5.2]{Yu}.

\begin{proof} 
Clearly we may assume that $\g_\O=\g$. 
Let $f_1,\ldots,f_r$ be homogeneous generators of $\I_{\bar{\O}}$ 
that we order so that $2=d_1 \le \cdots \le d_r$, with $d_i=\deg f_i$ for any 
$i=1,\ldots ,r$ (cf.~Lemma \ref{l:deg2}). 

1) Through our identification, we can write 
$$
\pi_{\bar{\O},1}^{-1}(0) \simeq \{0\} \times \{ x \in \g \; | \;   f_i(t x ) 
= 0 \text{ mod } t^2 \, \text{ for any }\,  i=1,\ldots,r\},
$$ 
whence the statement since for any $x \in \g$ and $i\in\{1,\ldots,r\}$, we have
$f_i(t x) =t^{d_i} f_i(x)$ and $d_i \ge 2$. 

2)  Assume that $m\ge 2$. 
Let $(x_1,x_2,\ldots,x_{m-1})$ be an element of $\J_{m-2}(\bar{\O})$, 
and let $x_m \in \g$. 
Then for any $i\in\{1,\ldots,r\}$, we get 
$$
f_i(t x_1 +t^{2}x_2 +\cdots + t^{m} x_{m} ) =  f_i(t x_1  +  t^2 x_2 +\cdots +  t^{m-1} x_{m-1}) 
\text{ mod } t^{m+1}
$$ 
since $f_i$ is homogeneous of degree at least $2$.
Hence, 
$$
f_i(t x_1 +t^{2} x_2 +\cdots +  t^{m}x_{m}) =  t^{d_i} f_i(x_1 + t x_2  +\cdots + 
 t^{m-2} x_{m-1}) \text{ mod } t^{m+1}.
$$
But $f_i(x_1 + t x_2  +\cdots +  t^{m-2} x_{m-1})=0 \text{ mod } t^{m-1}$ because 
$(x_1,x_2,\ldots,x_{m-1})\in \J_{m-2}(\bar{\O})$. So,  
$$
t^{d_i} f_i(x_1 + t x_2  +\cdots + t^{m-2} x_{m-1})=0\text{ mod } t^{m+1}
$$ 
since $d_i \ge 2$. In other words, $(0,x_1,x_2,\ldots,x_m)$ is an element of $\pi_{\bar{\O},m}^{-1}(0)$.

Thus we obtain an embedding from $\J_{m-2}(\bar{\O}) \times\g$ 
into $\pi_{\bar{\O},m}^{-1}(0)$ given by 
$$
\J_{m-2}(\bar{\O}) \times\g \longrightarrow \pi_{\bar{\O},m}^{-1}(0), 
\quad ((x_1,x_2,\ldots,x_{m-1}),x_m ) \longmapsto (0,x_1,x_2,\ldots,x_{m-1},x_m ).
$$
The assertions follows. 
\end{proof}

Let $\O$ be a nonzero nilpotent orbit of $\g$, and fix $e \in \O$. 
The tangent space at $e$ to $\bar{\O}$ is the space $[e,\g]$. 
Consider the morphism 
$$
\eta_{\g,e} \colon G\times [e,\g] \longrightarrow \g,\quad (g,x) \longmapsto g(x).
$$ 

\begin{prop} \label{p:eta} 
The nonzero nilpotent orbit $\O$ verifies \RC$_1$ if and only if 
the closure of the image of 
$\eta_{\g,e}$ is strictly contained in $\g_\O$. 
\end{prop}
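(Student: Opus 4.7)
The plan is to combine Lemma~\ref{l:zero}(1) with an explicit coordinate description of $\J_1(\O) = \J_1(G).e$ and a scaling argument exploiting the conical structure of $\bar{\O}$.

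By Lemma~\ref{l:zero}(1), under the identification $\J_1(\g_\O) \simeq \g_\O \times \g_\O$, we have $\pi_{\bar{\O},1}^{-1}(0) = \{0\}\times \g_\O$. By Lemma~\ref{l:gpe}, $\pi_{\bar{\O},1}^{-1}(\O) = \J_1(\O) = \J_1(G).e$. Unpacking the adjoint action of $\J_1(G)\simeq {\rm T}G$ on $\J_1(\g)\simeq \g\oplus \g t$ in coordinates, one obtains
\[
\J_1(\O) = \{(g.e,\; g.x) : g \in G,\; x \in [e,\g]\} \subset \g_\O \times \g_\O,
\]
so that its image under the second projection $p_2 \colon \g_\O \times \g_\O \to \g_\O$ is exactly $\Im \eta_{\g,e}$.

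One direction is then immediate. If $\overline{\Im \eta_{\g,e}}$ is strictly contained in $\g_\O$, pick $v \in \g_\O \setminus \overline{\Im \eta_{\g,e}}$; then $(0,v)\in \pi_{\bar{\O},1}^{-1}(0)$, but by continuity of $p_2$ we have $p_2(\overline{\J_1(\O)}) \subseteq \overline{\Im \eta_{\g,e}}$, so $(0,v) \notin \overline{\J_1(\O)} = \overline{\pi_{\bar{\O},1}^{-1}(\O)}$, and $\O$ verifies \RC$_1$.

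For the converse, the plan is to exhibit a $\C^*\times \C$-action on $\J_1(\bar{\O})$ preserving $\J_1(\O)$. The conicity of $\bar{\O}$ supplies the diagonal scaling $s\cdot(y_0,y_1)=(sy_0,sy_1)$, and the fact that each fiber of $\pi_{\bar{\O},1}$ is a Zariski tangent space (hence a $\C$-vector space) furnishes the independent scaling $(y_0,y_1)\mapsto (y_0,uy_1)$. Composing gives an action $(s,v)\cdot(y_0,y_1)=(sy_0,vy_1)$ for $(s,v)\in\C^*\times \C$; this preserves $\J_1(\O)$ because $\O$ is itself $\C^*$-stable (via Jacobson--Morozov, using an $\sl_2$-triple containing $e$) and $[sy_0,\g]=[y_0,\g]$ for $s\neq 0$. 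Applying this action with $v=1$ to $(g.e,g.x)\in \J_1(\O)$ and letting $s\to 0$ produces $(0,g.x)\in \overline{\J_1(\O)}$, so $\{0\}\times \Im\eta_{\g,e} \subset \overline{\J_1(\O)}$. Taking closures, if $\overline{\Im\eta_{\g,e}}=\g_\O$ then $\pi_{\bar{\O},1}^{-1}(0)\subseteq \overline{\pi_{\bar{\O},1}^{-1}(\O)}$, so $\O$ does not verify \RC$_1$. The main technical point is justifying this $\C^*\times \C$-action: one must verify that both scalings genuinely preserve the scheme $\J_1(\bar{\O})$, that they restrict to $\J_1(\O)$, and that the limit $s\to 0$ really lands in $\overline{\J_1(\O)}$ rather than escaping it.
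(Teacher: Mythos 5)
Your proof is correct and takes essentially the same route as the paper: your ``$\C^*\times\C$-action'' is precisely the paper's observation that $\overline{\pi_{\bar{\O},1}^{-1}(\O)}$ is a closed bicone of $\g_\O\times\g_\O$ (stable under $(\C^*\times\C^*)$-scaling, because $\O$ is a cone and the fiber $[y_0,\g_\O]$ is a linear subspace depending only on the ray $\C^*y_0$). The technical point you flag is easy to settle along exactly those lines and is what the paper does.
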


\begin{proof} Since $[e,\g] =[e,\g_\O]$, we may assume that $\g=\g_\O$. 
Thus, by the definition of condition \RC$_1$, we have to show that 
$\pi_{\bar{\O},1}^{-1}(0)$ is contained in $\bar{\pi_{\bar{\O},1}^{-1}(\O)}$ if 
and only if $\eta_{\g,e}$ is dominant, i.e., 
$\bar{G.[e,\g]} =\g$. 
 
By Lemma \ref{l:zero},(1), we have $\pi_{\bar{\O},1}^{-1}(0)\simeq \{0\}\times \g$. 
On the other hand, 
$$\pi_{\bar{\O},1}^{-1}(\O)= G.(\{e\}\times [e,\g]).$$
So, if $\pi_{\bar{\O},1}^{-1}(0)\subset \bar{\pi_{\bar{\O},1}^{-1}(\O)}$, 
then 
$$\{0\} \times\g \subset \bar{G.(\{e\}\times [e,\g])} \subset \bar{G.e}\times \bar{G.[e,\g]},$$
whence the inclusion $\g \subset \bar{G.[e,\g]}$, and $\eta_{\g,e}$ is dominant. 

For the other direction, observe that $\bar{\pi_{\bar{\O},1}^{-1}(\O)}$ is a closed bicone 
of $\g\times\g$ since $\O$ and $\bar{\O}$ are both subcones of $\g$. Here, by 
{\em bicone}, we mean a subset of $\g\times\g$ stable under the natural 
$(\C^*\times\C^*)$-action on $\g\times\g$. 
Therefore, if $\bar{G.[e,\g]} =\g$, then 
$$
\bar{G.(\{e\}\times [e,\g])} =
\bar{G.(\C^*e\times [e,\g])} \supset \{0\}\times \bar{G.[e,\g]} = \{0\}\times\g,
$$
whence $\pi_{\bar{\O},1}^{-1}(0) \subset \bar{\pi_{\bar{\O},1}^{-1}(\O)}$. 
\end{proof}

\begin{ex} \label{ex:2p} 
Let $p\in\N^*$ with $p\ge 2$, and $\g = \sl_{2p}(\C)$.
In the notations of Appendix~\ref{app:not}, we
claim that the nilpotent orbit $\O_{(2^{p})}$  of $\g$ associated with the 
partition $(2^p)$ verifies \RC$_1$. 
According to Proposition \ref{p:eta}, it suffices to 
prove that for the element 
$$
e:=\begin{pmatrix} 0 & I_p \\ 0 & 0
\end{pmatrix} \in \O_{(2^{p})} ,
$$
the morphism $\eta_{\g,e}$ is not dominant.  
We readily verify that $[e,\g]$ consists of matrices 
of the form
$$
\begin{pmatrix} A & C \\ 0 & -A
\end{pmatrix}
$$
with $A$ and $C$ of size $p$. In particular, $[e,\g]$ is contained in the 
closed subset $\mathcal{Z}$ of $\g$ consisting of the matrices whose characteristic 
polynomial is even. Since $\bar{G([e,\g])}$ and $\mathcal{Z}$ are both closed 
$G$-stable subsets of $\g$, we get
$$
\bar{G([e,\g])} \subset \mathcal{Z}.
$$
The diagonal matrix ${\rm diag}(1,\ldots,1,-2p+1)$ is in $\g$ 
but does not lie in $\mathcal{Z}$, for $p\ge 2$. 
Hence, $\mathcal{Z}$ is strictly contained in $\g$, and 
$\eta_{\g,e}$ is not dominant. Thus $\O_{(2^{p})}$ verifies \RC$_1$.

According to Lemma \ref{l:RC},(1), $\J_1(\bar{\O_{(2^p)}})$ is reducible. In fact, 
we can be more precise. By \cite[Thm.\,1]{We2} (see also \cite{We} or \cite[Prop.\,8.2.15]{We3}),  
the defining ideal of $\bar{\O_{(2^p)}}$ is generated by 
the entries
of the matrix $X^{2}$ as functions of $X\in\sl_{2p}(\C)$. It follows that $\J_{1}(\bar{\O_{(2^{p})}})$
can be identified with the scheme of pairs $(X_{0},X_{1}) \in \sl_{2p}(\C ) \times \sl_{2p}(\C )$ defined 
by the equations $X_{0}^{2}=0$ and $X_{0}X_{1} + X_{1}X_{0}=0$.
Using this identification, we obtain from direct computations that 
\begin{itemize}
\item $\J_1(\bar{\O_{(2^p)}})$ has exactly one irreducible component of 
dimension $4p^2=2 \dim \O_{(2^p)}$, 
\item all the other irreducible components have dimension $4p^2-1$, and 
$\pi_{\bar{\O_{(2^p)}},1}^{-1}(0)$ is one of them.
\end{itemize}
\end{ex}

\begin{rem} \label{rk:dis}
Assume that $\g=\g_\O$. A nilpotent element $e$ is distinguished if its centralizer is 
contained in the nilpotent cone. In particular, if $e$ is distinguished, then the centralizer 
of an $\sl_2$-triple $(e,h,f)$ in $\g$ is zero, and the theory of representations of $\sl_2$ 
shows that $[e,\g]$ contains $\g^h$, and hence contains a Cartan subalgebra of $\g$. 
Consequently, $G.e$ does not verify \RC$_1$.
\end{rem}

\begin{rem} \label{rk:eta}
Assume that $\g=\g_\O$. 
Since $G\times [e,\g]$ and $\g$ are irreducible varieties, $\eta_{\g,e}$ is dominant 
if and only if there is a nonempty open set $U$ consisting of 
points $a \in G\times [e,\g]$ such that $({\rm d}\eta_{\g,e})_a$ is surjective. 
The differential of $\eta_{\g,e}$ at $a=(g,[e,x])$, with $(g,x)\in G\times \g$ is given by 
$$\g\times[e,\g]  \longrightarrow \g,\quad (v,[e,w])\longmapsto  [v,[e,x]]+g([e,w]).$$  
Let us endow $G\times [e,\g]$ with the action of $G$ by left multiplication on the first factor. 
Since $\eta_{\g,e}$ is $G$-equivariant, we may assume that 
$a$ is of the form $a=(1_G,[e,x])$ with $x\in\g$. 
Then $({\rm d}\eta_{\g,e})_a$ is surjective if and only if $[\g,[e,x]]+[e,\g]=\g$. 

Consequently, $\eta_{\g, e}$ is dominant if and only if there exists $x\in \g$ such that 
$[\g,[e,x]]+[e,\g]=\g$. This allows us to affirm in some cases that $\eta_{\g,e}$ is dominant. 
For example, for $e$ in the non-distinguished nilpotent orbit $\O_{(3^2)}$ of $\sl_6(\C)$, the map
$\eta_{\g, e}$ is dominant. 
\end{rem}

\section{Little nilpotent orbits} \label{S:litt}
We introduce in this section a family of nonzero nilpotent orbits which verify 
both \RC$_1$ and \RC$_2(m)$ for every $m\in\N^*$. 
This family turns out to be useful to study the  
reducibility of jet schemes of many other orbits. 

Lemma \ref{l:zero} leads us to the following definition. 

\begin{defi}  \label{d:litt} 
Let $\O$ be a nilpotent orbit of $\g$ and let $\g_\O$ be as defined in Definition \ref{d:gO}.
We say that $\O$ is {\em little} if $0 < 2 \dim \O \le \dim \g_\O$. 
\end{defi}
In particular, neither the zero orbit nor the regular nilpotent orbit is little. 

\begin{prop} \label{p:litt}
If $\O$ is a little nilpotent orbit of $\g$, 
then $\O$ verifies \RC$_1$ and \RC$_2(m)$ for every $m \in\N^*$. 
\end{prop}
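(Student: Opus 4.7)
The plan is to derive both conditions directly from Lemma~\ref{l:zero} by taking the ``witness'' orbit $\O'$ in Definition~\ref{d:RC} to be the zero orbit, and to exploit the dimension inequality $\dim\g_\O \ge 2\dim\O$ built into the definition of ``little''. We may and shall assume $\g=\g_\O$, since both jet schemes and the hypothesis depend only on $\g_\O$. Note also that $\pi_{\bar\O,m}^{-1}(\O)=\J_m(\O)$ by Lemma~\ref{l:gpe}, so it is smooth and irreducible of dimension $(m+1)\dim\O$ by Proposition~\ref{p:smo}; consequently the closure $\overline{\pi_{\bar\O,m}^{-1}(\O)}$ is irreducible of the same dimension.

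For condition \RC$_2(m)$, the plan is to take $\O'=\{0\}\subset \bar\O\setminus\O$ (which is legitimate since $\O\ne 0$). For $m=1$, Lemma~\ref{l:zero},(1) gives $\dim\pi_{\bar\O,1}^{-1}(0)=\dim\g \ge 2\dim\O=(1+1)\dim\O$. For $m\ge 2$, Lemma~\ref{l:zero},(2) yields
\[
\dim\pi_{\bar\O,m}^{-1}(0)\ge m\dim\O+\operatorname{codim}_{\g}(\O)=m\dim\O+(\dim\g-\dim\O)\ge (m+1)\dim\O,
\]
the last inequality being exactly the little hypothesis $\dim\g\ge 2\dim\O$.

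For condition \RC$_1$, the plan is to compare the two irreducible sets $\pi_{\bar\O,1}^{-1}(0)\simeq\{0\}\times\g$ (dimension $\dim\g$, by Lemma~\ref{l:zero},(1)) and $\overline{\pi_{\bar\O,1}^{-1}(\O)}$ (irreducible of dimension $2\dim\O$). If the former were contained in the latter, then $\dim\g\le 2\dim\O$, which combined with the little hypothesis forces equality $\dim\g=2\dim\O$; and then, both being irreducible closed sets of the same dimension with one inside the other, they must coincide. This is the step I expect to be the cleanest to write, since one gets an immediate contradiction by projecting to $\bar\O$: the projection of $\overline{\pi_{\bar\O,1}^{-1}(\O)}$ is dense in $\bar\O$ because it already contains the dense subset $\O$, whereas $\{0\}\times\g$ projects to the single point $\{0\}\subsetneq\bar\O$ (we use $\O\ne 0$). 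Hence $\pi_{\bar\O,1}^{-1}(0)\not\subset\overline{\pi_{\bar\O,1}^{-1}(\O)}$, which is \RC$_1$.

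No serious obstacle is anticipated: once the projection-onto-$\bar\O$ argument is in place for \RC$_1$, everything reduces to the elementary numerical inequalities packaged in Lemma~\ref{l:zero} combined with the defining inequality of a little orbit.
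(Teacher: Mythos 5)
Your proof is correct and follows essentially the same route as the paper's: reduce to $\g=\g_\O$, take the zero orbit as witness, and invoke Lemma~\ref{l:zero} together with the defining inequality $\dim\g\ge 2\dim\O$. In fact you are slightly more careful than the paper on the \RC$_1$ part: the published proof simply asserts that the dimension comparison $\dim\pi_{\bar\O,1}^{-1}(0)=\dim\g\ge 2\dim\O=\dim\pi_{\bar\O,1}^{-1}(\O)$ yields \RC$_1$, whereas \RC$_1$ is a non-containment statement, not a dimension inequality, so one must rule out the borderline case $\dim\g=2\dim\O$ where the closed irreducible fiber $\pi_{\bar\O,1}^{-1}(0)$ could a priori coincide with $\overline{\pi_{\bar\O,1}^{-1}(\O)}$. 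Your projection-to-$\bar\O$ argument (the first set projects to $\{0\}$, the second dominates $\bar\O$) settles exactly this point and is the clean way to make the paper's one-line claim airtight.
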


\begin{proof} Let $\O$ be a little nilpotent orbit of $\g$.  
As in the preceding proofs, we may assume that $\g=\g_\O$. 
According to Lemma~\ref{l:zero},(1), we have $\dim \pi_{\bar{\O},1}^{-1}(0)=\dim \g$. 
Since $\pi_{\bar{\O},1}^{-1}(\O)$ has dimension 
$2\dim \O \le \dim\g$,  $\O$ verifies \RC$_2(1)$ and  \RC$_1$.
Now let  $m \ge 2$. 
According to Lemma~\ref{l:zero},(2), we have 
$$
\dim  \pi_{\bar{\O},m}^{-1}(0) \ge m \dim \O +{\rm codim}_\g(\O) 
\ge(m+1)\dim \O, 
$$
since ${\rm codim}_\g(\O)  \ge \dim\O$ because $\O$ is little. 
Hence $\O$ verifies \RC$_2(m)$. 
\end{proof}

When $\mathfrak{g}$ is simple, there is a unique nonzero nilpotent orbit $\O_{\min}$, called the {\em minimal} nilpotent orbit of $\g$,
of minimal dimension and it is contained in the closure of all nonzero nilpotent orbits. 

\begin{cor} \label{c:min}
Assume that $\g$ is simple and not of type $\bf{A_1}$. 
Then $\O_{\min}$ is little. 
In particular, $\J_m(\bar{\O_{\min}})$ is reducible for every $m\in\N^*$.  
\end{cor}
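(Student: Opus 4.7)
The plan is to reduce the statement to a numerical inequality and verify it by a case-by-case check. Since $\g$ is simple, the ideal $\g_{\O_{\min}}$ coincides with $\g$, so $\O_{\min}$ is little (in the sense of Definition~\ref{d:litt}) precisely when
$$2\dim \O_{\min} \leq \dim \g.$$
Once this is established, Proposition~\ref{p:litt} gives that $\O_{\min}$ verifies \RC$_2(m)$ for every $m \in \N^*$, and Lemma~\ref{l:RC},(2) then yields the reducibility of $\J_m(\bar{\O_{\min}})$. So the only real content is the inequality above.

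To verify it, the cleanest route is via the highest root. Fix a Cartan subalgebra $\h$ of $\g$ and let $\theta$ be the highest root; then $\O_{\min} = G.e_\theta$, where $e_\theta$ is a nonzero root vector. The $\sl_2$-triple $(e_\theta, \theta^\vee, e_{-\theta})$ determines a grading $\g = \g_{-2} \oplus \g_{-1} \oplus \g_0 \oplus \g_1 \oplus \g_2$ in which $\g_{\pm 2} = \C e_{\pm \theta}$ (because $\theta$ is maximal among roots). Applying standard $\sl_2$-representation theory to the adjoint representation, one computes $\dim \g^{e_\theta} = \dim \g_0 + \dim \g_1$, hence
$$\dim \O_{\min} = \dim \g - \dim \g_0 - \dim \g_1 = 2 + 2\dim \g_1.$$
The ``little'' condition therefore becomes $\dim \g_0 \geq 2 \dim \g_1 + 2$. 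Equivalently, using Wang's formula $\dim \O_{\min} = 2h^\vee - 2$ with $h^\vee$ the dual Coxeter number, one must show $4(h^\vee-1) \leq \dim \g$.

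This last inequality is then verified type by type from the standard tables. In types $\mathbf{A}_n$ with $n \geq 2$ one checks $4n \leq n^2 + 2n$; in the other classical types $\mathbf{B}_n, \mathbf{C}_n, \mathbf{D}_n$ the corresponding quadratic inequality in $n$ is easily seen to hold for the relevant ranges; and in each exceptional type $\mathbf{G}_2, \mathbf{F}_4, \mathbf{E}_6, \mathbf{E}_7, \mathbf{E}_8$ the inequality is a direct numerical comparison. The exclusion of $\mathbf{A}_1$ is forced, since in that case $\dim \g_0 = 1$, $\dim \g_1 = 0$, and $\O_{\min}$ coincides with $\O_{\reg}$, whose closure is the whole nilpotent cone with irreducible jet schemes. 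There is no genuine obstacle; the main ``work'' is the uniform recognition that no case-free argument is available, and that one simply tabulates the numerical data to conclude.
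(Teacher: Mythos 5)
Your overall route is in the same spirit as the paper's — use the $\sl_2$-triple through $e_\theta$, the associated grading $\g=\bigoplus_{i=-2}^2\g_i$ with $\g_{\pm 2}$ one-dimensional, and the standard count $\dim\g^{e_\theta}=\dim\g_0+\dim\g_1$ — but there is a genuine computational error that invalidates your intermediate criterion. From $\dim\g = 2 + 2\dim\g_1 + \dim\g_0$ and $\dim\O_{\min}=\dim\g-\dim\g_0-\dim\g_1$, one gets
$$\dim\O_{\min} = 2 + \dim\g_1,$$
\emph{not} $2+2\dim\g_1$. (Check $\sl_3$: $\dim\g_1=2$, $\dim\O_{\min}=4$.) Consequently the ``little'' condition $2\dim\O_{\min}\le\dim\g$ reduces to $\dim\g_0\ge 2$, not to $\dim\g_0\ge 2\dim\g_1+2$ as you claim. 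Your claimed inequality is in fact \emph{false} already for $\sl_3$ (where $\dim\g_0=2$, $\dim\g_1=2$), so it cannot be ``equivalent'' to Wang's $4(h^\vee-1)\le\dim\g$; the two statements are simply not the same.

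Once the computation is corrected, the paper closes the argument uniformly and without any tables: $\g_0$ is a Levi subalgebra containing the chosen Cartan subalgebra $\h$, so $\dim\g_0\ge\operatorname{rank}\g\ge 2$ the moment $\g$ is not of type $\bf A_1$. This is a one-line observation that makes the type-by-type check (which you gesture at but never actually carry out) unnecessary. Your fall-back via Wang's formula $4(h^\vee-1)\le\dim\g$ would indeed succeed if worked through, but it is both heavier and tangential to the structure you had already set up: you had $\dim\g-2\dim\O_{\min}=\dim\g_0-2$ within reach, and missed that the right-hand side is nonnegative for rank reasons alone.
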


\begin{proof} 
Let $e\in\O_{\min}$ that we embed into an $\sl_2$-triple 
$(e,h,f)$ of $\g$, and consider the corresponding Dynkin grading, 
$$
\g= \bigoplus_{i\in\Z} \g(i) \quad \text{ with }\quad\g(i):=\{x\in\g \; ;\; [h,x]=ix\}.
$$ 
By \cite[Lem.\,4.1.3]{CMa}, $\dim \O= \dim\g-\dim\g(0)
-\dim\g(1)$. In addition, since $e\in\O_{\min}$, we have $\dim \g(2)=1$ 
and $\g=\sum\limits_{-2\le i\le 2} \g(i)$, \cite[Prop.\,34.4.1]{TY}. 
As a result, we obtain that 
$$
\dim \g - 2\dim \O = \dim\g(0) - 2.
$$ 
The Levi subalgebra $\g(0)$ contains a Cartan subalgebra which has 
dimension at least two by our hypothesis. Hence, 
$\dim \g - 2\dim \O\ge 0$, and so $\O_{\min}$ is little.   
\end{proof}

For classical simple Lie algebras, there are explicit formulas (see Appendix~\ref{app:not}) 
for the dimension of nilpotent orbits. This allows to obtain readily examples of little nilpotent
orbits.

\begin{ex} \label{ex:litt} 
Let $n\in \mathbb{N}^{*}$ and $p,q\in\N$. 
\begin{itemize}
\item[\rm (i)\;\,] A nilpotent orbit of $\sl_n(\C )$ corresponding to a rectangular partition 
is never little. 
\item[\rm (ii)\;] The nilpotent orbit $\O_{(2^p,1^q)}$ of $\sl_{2p+q}(\C)$ is little if and only if $p,q\in\N^{*}$. 
\item[\rm (iii)] The nilpotent orbit $\O_{(p,1^q)}$ of $\sl_{p+q}(\C)$ 
is little for $q \gg p$. 
\end{itemize}
Explicit computations suggest that it is unlikely that there is a nice description 
of little nilpotent orbits in terms of partitions.
\end{ex} 

We refer to Appendix~\ref{app:not} for the notations $\P_\varepsilon(n)$, 
$\varepsilon\in\{0,1\}$, and $\O_{\bs{\lambda}}$   
with $\bs{\lambda}\in \P_\varepsilon(n)$, $n\in\N^*$.  

\begin{ex} \label{ex2:litt}
Let $\bs{\lambda}=(2^p,1^q)$, with $p\in \N^*$ and $q\in\N$.
\begin{itemize}
\item[\rm (i)\;\,] If $p$ is even, then $\bs{\lambda} \in \P_1(n)$, and 
the nilpotent orbit $\O_{\bs{\lambda}}$  of $\so_{2p+q}(\C)$ is little. 
\item[(ii)\;] If $q$ is even,  then $\bs{\lambda} \in \P_{-1}(n)$, and 
the nilpotent orbit $\O_{\bs{\lambda}}$  of $\sp_{2p+q}(\C)$ is little if and only if $p\leqslant q(q+1)/2$.  
\end{itemize}
\end{ex}

The next proposition will allow us to produce new examples 
of nilpotent orbits which verify \RC$_1$ by the "restriction" of 
certain little nilpotent orbits to Levi subalgebras.

Recall that for $\O$ a nilpotent orbit of some reductive Lie algebra $\a$, 
the semisimple Lie algebra $\a_{\O}$ was defined in Definition \ref{d:gO}. 

\begin{prop} \label{p:res}
Assume that $\g$ is simple. Let $\l$ be a 
Levi subalgebra of $\g$ with center $\z(\l)$,  
and denote by $A$ the connected subgroup of $G$ whose Lie algebra is $\a:=[\l,\l]$. 
Let $e$ be a nilpotent element of $\a$ and suppose that the following conditions 
are satisfied: 
\begin{itemize}
\item[{\rm (i)}\;\,] $\a$ contains a regular semisimple element of $\g$, 
\item[{\rm (ii)}\;] $\a_{A.e}=\a$,  
\item[{\rm (iii)}]  $2\dim G.e \le \dim \g-\dim\z(\l)$. 
\end{itemize}
Then $A.e$ verifies \RC$_1$. 
\end{prop}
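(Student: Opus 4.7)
The plan is to reduce, via Proposition~\ref{p:eta}, to showing that $\overline{A.[e,\a]}$ is strictly contained in $\a$, and then to argue by contradiction through a dimension count in the ambient algebra $\g$. Applied to the reductive Lie algebra $\a$ and the orbit $A.e$, with $\a_{A.e}=\a$ from hypothesis (ii), the condition \RC$_1$ for $A.e$ is equivalent to $\overline{A.[e,\a]} \subsetneq \a$. Suppose, for contradiction, that $\overline{A.[e,\a]}=\a$. Since $A \subseteq G$ and the $G$-action on $\g$ is continuous, $G.[e,\a]=G.A.[e,\a]$ is then dense in $G.\a$, whence $\overline{G.\a} \subseteq \overline{G.[e,\a]}$ as closed subsets of $\g$.

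I would then compute both dimensions. For $\overline{G.\a}$, I use the Chevalley quotient $\chi\colon \g \to \g/\!/G \cong \h/W$. The Levi $\l$ contains a Cartan subalgebra $\h$ of $\g$, and $\h \cap \a$ is a Cartan of $\a$ of dimension $\mathrm{rk}(\g) - \dim\z(\l)$; hypothesis (i) guarantees that $\h \cap \a$ contains elements regular in $\g$. Since every semisimple element of $\a$ is $A$-conjugate to an element of $\h \cap \a$, and since for generic $c \in \chi(\a)$ the fiber $\chi^{-1}(c)$ is a single regular semisimple $G$-orbit meeting $\a$, one obtains $\dim \overline{G.\a} = \dim \g - \dim\z(\l)$. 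For $\overline{G.[e,\a]}$, the inclusion $[e,\a]\subseteq[e,\g]$ gives $\overline{G.[e,\a]} \subseteq \overline{G.[e,\g]}$, and the action map $G \times [e,\g] \to \g$ is invariant under the $G^e$-action $(g,v)\cdot h=(gh, h^{-1}.v)$, hence factors through the homogeneous bundle $G\times^{G^e}[e,\g]$ of dimension $\dim G - \dim G^e + \dim[e,\g] = 2\dim G.e$. Therefore $\dim \overline{G.[e,\a]} \le 2\dim G.e$.

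Combining these two computations with hypothesis (iii), one obtains the chain of inequalities
\[
\dim \g - \dim \z(\l) \;=\; \dim \overline{G.\a} \;\le\; \dim \overline{G.[e,\a]} \;\le\; 2\dim G.e \;\le\; \dim \g - \dim \z(\l).
\]
When (iii) is strict, at least one of the intermediate inequalities is strict, contradicting the equality of the two extreme terms. The main technical obstacle is the boundary case in which all inequalities become equalities and $\overline{G.[e,\a]} = \overline{G.\a}$ as irreducible closed $G$-stable subvarieties of $\g$; ruling this case out requires a finer analysis of the Chevalley image $\chi([e,\a]) \subseteq \chi(\a)$, combining hypothesis (ii) with the structure of $[e,\a]=\Im\,\ad(e)|_\a$ to show that $\chi([e,\a])$ is a proper subvariety of $\chi(\a)$, which forces $\dim \overline{G.[e,\a]} < \dim \overline{G.\a}$ and restores the contradiction in all cases.
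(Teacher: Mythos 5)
Your overall strategy mirrors the paper's: reduce via Proposition~\ref{p:eta} (applied to $\a$) to showing $\overline{A.[e,\a]}\subsetneq\a$, pass to the $G$-orbits of $\a$ and $[e,\g]$, compute $\dim\overline{G.\a}=\dim\g-\dim\z(\l)$ exactly as in the paper's Step~1, and then compare with the dimension of the closure of $G.[e,\g]$. (The paper runs the reduction through the jet scheme $\J_1(\overline{A.e})$ and $\overline{G_1.e}$ rather than directly citing Proposition~\ref{p:eta} for $(\a,A.e)$, but these amount to the same thing.)

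The genuine gap is in your dimension bound on the image of $\eta_{\g,e}\colon G\times[e,\g]\to\g$. Your homogeneous-bundle argument via $G\times^{G^e}[e,\g]$ only yields $\dim\overline{G.[e,\g]}\le\dim G-\dim G^e+\dim[e,\g]=2\dim G.e$. Since hypothesis~(iii) is a non-strict inequality $2\dim G.e\le\dim\g-\dim\z(\l)$, your chain collapses to equalities in the boundary case, and at that point the proposal stops being a proof: you acknowledge the obstacle but only gesture at ``a finer analysis of the Chevalley image'' without carrying it out. The paper closes this gap by a sharper estimate in its Step~2: it is not only $\g^e$ but the larger subalgebra $\mathfrak{c}=\C h\oplus\g^e$ (where $(e,h,f)$ is an $\sl_2$-triple) that normalizes the subspace $[e,\g]$, so every fiber of $\eta_{\g,e}$ over a point of the image has dimension at least $1+\dim\g^e$. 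This gives the strict bound $\dim\overline{\Im\,\eta_{\g,e}}\le 2\dim G.e-1$, which makes $\dim\overline{G.\a}>\dim\overline{\Im\,\eta_{\g,e}}$ hold under~(iii) even when it is an equality, and the contradiction goes through in all cases. Without this extra ``$-1$'', condition~(iii) would need to be replaced by a strict inequality, which is the weaker statement in Proposition~\ref{p2:intro} of the introduction but not what Proposition~\ref{p:res} asserts.
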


\begin{proof} 
Define the following maps 
$$\theta \colon G\times \a \to \g, \; (g,x) \mapsto g(x), 
\qquad \eta = \eta_{\g,e}\colon G\times [e,\g] \to \g, \; (g,x) \mapsto g(x).$$
Observe that the image of each of the above maps is irreducible. 
Moreover, for any $x\in\g$, the map $g\mapsto (g^{-1},g(x))$ defines a 
bijection between $G_\theta(x):=\{g \in G \; ; \; g(x) \in \a\}$  
and $\theta^{-1}(\{x\})$. Similarly, we have a bijection between 
$G_\eta(x):=\{g \in G \; ; \; g(x) \in[e,\g]\}$ 
and $\eta^{-1}(\{x\})$. These bijections are isomorphisms of varieties. 

\smallskip

\noindent 
{\bf Step 1.} We shall first compute the dimension of the image of $\theta$. 

Let $L$ be the connected subgroup of $G$ whose Lie algebra is $\l$. 
By condition (i), $\a$ contains regular semisimple elements of $\g$. 
If $s$ is such an element, then $\g^s$ is a Cartan subalgebra of $\l$. Let 
$g\in G_\theta(s)$. Then $g(s) \in\a$ and $\g^{g(s)}=g(\g^s)$ is another 
Cartan subalgebra of $\l$. It follows that there exists $\tau \in L$ such that 
$\tau g \in N_G(\g^s)$, with $N_G(\g^s)$ the normalizer of $\g^s$ in $G$. 
Hence, $g \in LN_G(\g^s)$. Thus, we have obtained the inclusion 
$G_\theta(s)\subset L N_G(\g^s)$. 
On the other hand, since $L$ normalizes $\a$, we get $L \subset G_\theta(s)$ 
and therefore $\dim L\le \dim G_{\theta}(s)$. 

Let $C_G(\g^s)$ and $C_L(\g^s)$ be the centralizers of $\g^s$ in $G$ and 
$L$ respectively. 
Since $\g^s$ is a Cartan subalgebra, $C_G(\g^s)$ is connected and so, 
$C_G(\g^s)=C_L(\g^s)$ is contained in $L$. It follows that $L N_G(\g^s)$ is 
a finite union of right $L$-cosets. We deduce that 
$$
\dim \theta^{-1}(\{s\})=\dim G_\theta(s)=\dim L =\dim \a +\z(\l).
$$ 
Since the set of regular semisimple elements in $\g$ is open 
and dense, we obtain that for $s$ as above, 
$$
\dim \bar{\Im \theta}=\dim\g+\dim\a - \dim \theta^{-1}(\{s\} )= \dim\g -\dim\z(\l).
$$ 

\smallskip

\noindent 
{\bf Step 2.} We now consider the image of $\eta$. 

Let $(e,h,f)$ be an $\sl_2$-triple of $\g$. We easily check that 
$\mathfrak{c}:=\C h\oplus \g^{e}$ is a Lie subalgebra, and that $\mathfrak{c}$ stabilizes $[e,\g]$. 
Let $C$ be the connected subgroup of $G$ whose Lie algebra is $\mathfrak{c}$. 
Then $C$ is contained  in $G_\eta(x)$ for any $x\in[e,\g]$. In particular, $\dim G_\eta(x) 
\ge \dim C = 1+\dim  \g^{e}$ for $x\in[e,\g]$, and so 
$$
\dim \bar{\Im \eta} \le \dim\g +\dim\,[e,\g]-1-\dim\g^{e}=2\dim G.e -1.
$$

\smallskip

\noindent 
{\bf Step 3.} By condition (iii) and Steps 1 and 2, we deduce that $\dim 
\bar{\Im \theta} > \dim \bar{\Im \eta}$. Thus $\bar{\Im \theta}\not\subset 
\bar{\Im \eta}$. We claim that this implies that $A.e$ is \RC$_1$. 
Let us suppose on the contrary that $A.e$ is not \RC$_1$. By 
condition (ii) and Lemma \ref{l:zero},(1), $\pi_{\bar{A.e},1}^{-1}(0)=\{0\}\times \a$. 
So, $\pi_{\bar{A.e},1}^{-1}(0)$ is contained in $\bar{\pi_{\bar{A.e},1}^{-1}(A.e)}$. 
Recall from the end of Section \ref{S:jet} the notations $G_{1}$ and $A_{1}$ for $\J_{1}(G)$ and $\J_1(A)$ respectively. 
It follows that 
$$
\{0\}\times G.\a \subset G_1.(\{0\}\times \a) \subset G_1 \bar{A_1.e} \subset \bar{G_1.e},
$$
whence 
$$
\{0\} \times \bar{G.\a} \subset \bar{G_1.e}.
$$
Since $\bar{\pi_{\bar{G.e},1}^{-1}(G.e)}=\bar{G_1.e}$ (cf.~Lemma \ref{l:gpe}), 
it follows from the proof of Proposition \ref{p:eta} that 
$$
\bar{G_1.e}\cap (\{0\}\times\g) = \bar{\pi_{\bar{G.e},1}^{-1}(G.e)} 
\cap (\{0\}\times\g)=\{0\}\times \bar{G.[e,\g]}. 
$$  
Hence we get $\bar{\Im \theta} \subset 
\bar{\Im \eta}$ and the contradiction. 
\end{proof}

Suppose that $\g$ is simple. Let us fix a Cartan subalgebra $\h$ of $\g$.
Denote by $\Delta$ the root system relative to $(\g,\h)$ and let us fix a system
of simple roots $\Pi$. Given $S\subset \Pi$, we denote $\Delta_{S} = \mathbb{Z}S\cap \Delta$
the subroot system generated by $S$, and 
$$
\l_{S}= \mathfrak{h} \oplus \bigoplus_{\alpha \in  \Delta_{S}} \mathfrak{g}_{\alpha}
$$
where $\mathfrak{g}_{\alpha}$ denotes the root subspace relative to $\alpha$.
Then $\l_{S}$ is a Levi subalgebra of $\g$ and any Levi subalgebra of $\g$ is conjugate to one in this form.

Given $S\subset \Pi$, denote $\mathfrak{t} = [\l_{S} , \l_{S}]\cap \h$. Then,
$\l_{S}$ verifies condition (i) if and only if  $\mathfrak{t} \not\subset \cup_{\alpha\in\Delta} \ker \alpha$. 
To check the latter condition, it is enough to verify that for every 
$\alpha \in\Delta$, there is $\beta \in S$ 
such that $\langle \beta^\vee, \alpha \rangle \not=0$.

Thus not all Levi subalgebras of $\g$ verify condition (i) of 
Proposition \ref{p:res}. For example, if $\g$ is simple of type $\bf{B}_{\bs{\ell}}$, 
then a (maximal) Levi subalgebra whose semisimple part is simple of type 
$\bf{B}_{\bs{\ell-1}}$ does not verify the condition. The same goes for a 
Levi subalgebra  in type $\bf{C}_{\bs{\ell}}$ whose semisimple part is 
simple of type $\bf{C}_{\bs{\ell-1}}$. 

However, if $\g$ is simple of type $\bf{D}_{\bs{\ell}}$ and if $\l$ is a Levi subalgebra 
whose semisimple part is simple of type $\bf{D}_{\bs{\ell-1}}$, then $\l$ verifies 
the condition (i). Likewise, if $\g$ is simple of type $\bf{E_7}$ and if 
$\l$ is a Levi subalgebra whose semisimple part is simple of type $\bf{E_6}$, 
then $\l$ verifies the condition (i). 
Applying Proposition~\ref{p:res}, we obtain examples of nilpotent orbits 
in types $\bf{D}$ or $\bf{E_6}$ which verify \RC$_1$ that are not little. 

We list in Table \ref{tab:res} some nilpotent orbits that we obtain 
in this way. In all the examples presented in the table, the center of the Levi subalgebra is $1$-dimensional, 
and $\a$ is simple. The first and second columns give the type of the simple Lie algebras 
$\g$ and $\a$. Condition (ii) is verified in view of the discussion above.
We describe the nilpotent orbits $G.e$ and $A.e$ in the third and fourth columns respectively. 
The description for an orbit in $\g$ of type ${\bf D}$ is given in terms of partitions 
(cf.~Appendix~\ref{app:not}), while for an orbit in $\g$ of type $\bf{E_6}$ or $\bf{E_7}$, it is given by its
Bala-Carter label. 

\begin{table}[h]
$$
\begin{array}{c|c|c|c}
\mathfrak{g} & \a & G.e & A.e \\
\hline\hline
\bf{D_{6}} & \bf{D_{5}} & (3,2^{2},1^{5}) & (3,2^{2},1^{3}) \\
\hline
\bf{D_{7}} & \bf{D_{6}} & (3^{2}, 1^{8}) & (3^{2},1^{6}) \\
\hline
\bf{D_{9}} & \bf{D_{8}} & (3^{2},2^{2},1^{8}) & (3^{2},2^{2},1^{6}) \\
\hline
\bf{D_{10}} & \bf{D_{9}} & (3^{3}, 1^{11}) & (3^{3}, 1^{9}) \\
\hline
\bf{D_{10}} &\bf{D_{9}} & (4^{2}, 1^{12}) & (4^{2}, 1^{10}) \\
\hline
\bf{D_{10}} & \bf{D_{9}} & (5,2^{2}, 1^{11}) & (5,2^{2}, 1^{9}) \\
\hline
\bf{D_{10}} & \bf{D_{9}} & (5,3, 1^{12}) & (5,3, 1^{10}) \\
\hline
\bf{E_{7}} & \bf{E_{6}} & (3A_{1})' & 3A_{1} \\
\hline
\bf{E_{7}} & \bf{E_{6}} & A_{2} & A_{2} \\
\end{array}
$$
\caption{Examples of non-little nilpotent orbits satisfying RC$_1$ obtained 
by restriction.} \label{tab:res}
\end{table}

\begin{rem} ~
\begin{itemize}
\item[1)] The first (and also the last) line of Table \ref{tab:res} provides an example of a 
{\em rigid}\footnote{See Section \ref{S:ind} for the notion of rigid nilpotent orbit, and Appendices~\ref{app:not} and~\ref{app:exc}
for the description of rigid nilpotent orbits in simple Lie algebras.} nilpotent orbit which verifies \RC$_1$ and which is not little.
\item[2)] Propositions \ref{p:eta}, \ref{p:litt} and \ref{p:res}, together with Remark \ref{rk:eta},
allow us to classify all nilpotent orbits verifying \RC$_1$ in simple Lie algebras
of exceptional type. They are listed in Appendix~\ref{app:exc}.
\end{itemize}
\end{rem}


\section{Induced nilpotent orbits} \label{S:ind}
Let $\l$ be a proper Levi subalgebra of $\g$, and let $\p$ be a parabolic 
subalgebra of $\g$ with Levi decomposition $\p=\l \oplus\n$ so that
$\n$ is the nilpotent radical of $\p$. 
Let $P$, $L$ and $U$ be the connected
closed subgroups of $G$ whose Lie algebra are $\p$, $\l$ and $\n$ respectively. 
Then $P=LU$. 

The following definitions and results on induced nilpotent orbits are mostly extracted
from \cite{Ri} and \cite{LS}. We refer to~\cite[Chap. 7]{CMa} for a recent survey.

\begin{thm} \label{t:CM}
Let $\O_{\l}$ be a nilpotent orbit of $\l$. There exists a unique nilpotent
orbit ${\O}_{\g}$ in ${\g}$ whose intersection with $\O_{\l}+ \n$ is a dense 
open subset of $\O_{\l}+ \n$. Moreover, the intersection of ${\O}_{\g}$ with
$\O_{\l}+\n$ consists of a single $P$-orbit and 
${\rm codim}_{\g}({\O}_{\g})={\rm codim} _{\l}({\O}_{\l})$.
\end{thm}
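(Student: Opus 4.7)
The plan is to introduce the $P$-stable ``slice'' $Y:=\O_\l+\n\subset\p$ and to transport information between $Y$ and its $G$-saturation. First I would verify that $Y$ is an irreducible $P$-stable subvariety of $\mathcal{N}(\g)$. Irreducibility follows from $Y\simeq \O_\l\times\n$. For $P$-stability, $L$ preserves both $\O_\l\subset\l$ and $\n$, so it preserves $Y$, while for $u=\exp\xi\in U$ with $\xi\in\n$ the inclusion $(e^{\ad \xi}-\id)(\l+\n)\subset\n$ shows that $u.(x+v)\in Y$ whenever $x+v\in Y$. That $Y$ lies in $\mathcal{N}(\g)$ follows from a standard cocharacter argument: if $\lambda$ is a one-parameter subgroup of the center of $L$ having strictly positive weights on $\n$, then $\lim_{t\to 0}\lambda(t).(x+v)=x\in\mathcal{N}(\g)$, and $\lambda$-invariance of every $G$-invariant polynomial forces $f(x+v)=f(x)=0$ for $f\in \C[\g]^G_+$.

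Next I would set $Z:=G.Y$, an irreducible constructible $G$-stable subset of $\mathcal{N}(\g)$. Since $\mathcal{N}(\g)$ is a finite union of $G$-orbits by Kostant's theorem, $\bar Z$ is the closure of a unique open $G$-orbit, which I denote by $\O_\g$. Note that $\O_\g\subset Z$, since $\O_\g\cap Y$ is open in $Y$ and must be nonempty: otherwise $Y$ would lie in the proper closed subset $\bar Z\setminus\O_\g$, forcing $Z=G.Y$ into the same proper closed subset and contradicting density of $Z$ in $\bar Z$. Hence $\O_\g\cap Y$ is dense open in $Y$. Uniqueness of $\O_\g$ with this property is then automatic: two distinct $G$-orbits meeting $Y$ in dense open subsets would have to intersect inside the irreducible variety $Y$.

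To establish the codimension equality and the single-$P$-orbit claim, I would study the $G$-equivariant morphism $\mu\colon G\times^P Y\to Z$, $[g,y]\mapsto g.y$, whose source has dimension $\dim G-\dim P+\dim Y=\dim G-\dim\l+\dim\O_\l$. The critical step is to prove that $\mu$ is generically finite, which reduces to the existence of an element $x_0\in Y$ with $\g^{x_0}\subset\p$. Granting this, one gets $\dim\O_\g=\dim G-\dim\g^{x_0}=\dim G-\dim\l+\dim\O_\l$, hence ${\rm codim}_\g(\O_\g)={\rm codim}_\l(\O_\l)$. Simultaneously, $\dim P.x_0=\dim P-\dim\p^{x_0}=\dim P-\dim\g^{x_0}=\dim\n+\dim\O_\l=\dim Y$, so $P.x_0$ is open in the irreducible variety $Y$. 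Since $\O_\g\cap Y$ is itself $P$-stable and dense open in $Y$, it coincides with the single $P$-orbit $P.x_0$.

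The principal obstacle is producing $x_0\in Y$ with $\g^{x_0}\subset\p$, which is the heart of the Lusztig--Spaltenstein argument. My approach would be to fix $e\in\O_\l$, embed it in an $\sl_2$-triple $(e,h,f)$ inside $\l$, and use the resulting $\ad h$-weight grading together with the grading of $\g$ induced by a cocharacter defining $\p$ in order to locate $y\in\n$ for which $\ad(e+y)$ restricted to a chosen vector-space complement of $\p$ in $\g$ is injective. A transversality argument should then show that such $y$ form a dense open subset of $\n$, and for any of them the element $x_0:=e+y$ has its $G$-centralizer contained in $\p$, closing the proof.
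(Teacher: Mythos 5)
This theorem is not proved in the paper; it is quoted as a classical result of Richardson and Lusztig--Spaltenstein, with the references \cite{Ri}, \cite{LS} and \cite[Chap.\,7]{CMa}. So there is no internal argument to compare yours against, and I am assessing your proposal on its own terms.

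The first half of your proposal is sound: $Y=\O_\l+\n$ is $P$-stable and irreducible, the cocharacter limit argument puts $Y$ inside $\mathcal{N}(\g)$, and since $\mathcal{N}(\g)$ has finitely many $G$-orbits the closure of $Z=G.Y$ is the closure of a unique open orbit $\O_\g$, whose intersection with $Y$ is then dense open; uniqueness of $\O_\g$ follows. The gap is in the dimension count, and it is in two places. First, the reduction of generic finiteness of $\mu\colon G\times^P Y\to Z$ to ``there exists $x_0\in Y$ with $\g^{x_0}\subset\p$'' is not valid as stated. If $\g^{x_0}\subset\p$ then $\g^{x_0}=\p^{x_0}$, and since $P.x_0\subset Y$ one only obtains $\dim\g^{x_0}=\dim\p^{x_0}\ge\dim\p-\dim Y=\dim\l^e$; that is the \emph{wrong} direction. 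Equality, equivalently density of $P.x_0$ in $Y$, is an additional condition that does not follow from $\g^{x_0}\subset\p$, and it is precisely this density (Richardson's dense-orbit theorem in the case $\O_\l=0$, and its Lusztig--Spaltenstein generalization in general) that makes the whole theorem work; your proposal silently assumes it when it writes $\dim P.x_0=\dim P-\dim\g^{x_0}=\dim\n+\dim\O_\l$. Second, the concluding sketch for producing $x_0$ does not deliver even the weaker statement: injectivity of $\ad(e+y)$ on a complement of $\p$ does not imply $\g^{e+y}\subset\p$, since a centralizing element $z=z_\p+z_{\bar\n}$ only satisfies $[e+y,z_\p]+[e+y,z_{\bar\n}]=0$, not the vanishing of each summand; and the natural lowest-weight argument against the cocharacter grading fails because the lowest term is $[e,z_{k_0}]$ and $\ad e$ is nilpotent, hence has a nontrivial kernel in each graded piece. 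These are exactly the points where the substance of the Lusztig--Spaltenstein proof lives, so the proposal leaves the crucial step unproved.
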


The nilpotent orbit ${\O}_{\g}$ only depends on ${\l}$, and not on the choice of a parabolic
subalgebra ${\p}$ containing it. The nilpotent orbit
${\O}_{\g}$ is called the \emph{induced nilpotent orbit of $\g$ from ${\O}_{\l}$}, and it is 
denoted by Ind$_{\l}^{\g}(\O_{\l})$. 
A nilpotent orbit which is not induced in a proper way from another one
is called {\em rigid}. In type $\bf{A}$, only the zero orbit is rigid. 

\begin{rem} \label{rk:ind} ~

\begin{itemize} 
\item[1)] Let $\s_1,\ldots,\s_n$ be the simple factors of $[\g,\g]$ 
and denote by $\z(\g)$ the center of $\g$. 
Then there are Levi subalgebras $\r_1,\ldots,\r_n$ of $\s_1,\ldots,\s_n$ 
respectively such that 
$$
\l = \z(\g) \times \r_1 \times \cdots \times \r_n.
$$  
If $\O_\l$ is a nilpotent orbit of $\l$, then $\O_\l=\O_{\r_1} \times \cdots \times \O_{\r_n}$, 
where $\O_{\r_1}, \ldots , \O_{\r_n}$ are   
nilpotent orbits in the semisimple parts 
of $\r_1,\ldots,\r_n$ respectively. 
Then
$$
{\rm Ind}_{\l}^{\g}(\O_\l)= 
{\rm Ind}_{\r_1}^{\s_1}(\O_{\r_1}) \times \cdots \times {\rm Ind}_{\r_n}^{\s_n}(\O_{\r_n})= 
{\rm Ind}_{[\g,\g]\cap \l}^{[\g,\g]}(\O_\l).
$$
\item[2)] The induction property is transitive in the following sense, 
\cite[Prop.\,7.1.4]{CMa}: if $\l_1$ and $\l_2$ are two Levi subalgebras of 
$\g$ with $\l_1\subset \l_2$, then 
$$
{\rm Ind}_{\l_2}^{\g}({\rm Ind}_{\l_1}^{\l_2}(\O_{\l_1})) = {\rm Ind}_{\l_1}^{\g}(\O_{\l_1}).
$$
\item[3)] 
If $\Omega_\l$ is an $L$-orbit in $\bar{\O_\l}\setminus \O_\l$, then 
the induced nilpotent orbit of $\g$ from $\Omega_\l$ is 
contained in $\bar{\O_\g}\setminus \O_\g$. 
\end{itemize}
\end{rem}

Let $\O_\l$ be a nilpotent orbit of $\l$ and denote by $\O_\g$ the induced 
nilpotent orbit of $\g$ from $\O_\l$. According to Theorem \ref{t:CM}, 
$\O_\g \cap (\O_\l+\n)$ is a single $P$-orbit that we shall denote 
by $\O_\p$, that is 
$$
\O_\p:= \O_\g \cap (\O_\l+\n).
$$ 

\begin{lemma} \label{l:ind}
We have:
$$
\bar{\O_\p} = \bar{\O_\l} +\n, \qquad 
\bar{\O_\p}\cap \O_\g = \O_\p \qquad \text{ and }\qquad 
\bar{\O_\g}=G.(\bar{\O_\l}+\n).
$$  
\end{lemma}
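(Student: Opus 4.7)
My plan is to prove the three equalities in order, each one relying on the previous. For the first equality $\bar{\O_\p}=\bar{\O_\l}+\n$, note that since $\p=\l\oplus\n$ and $\bar{\O_\l}\subseteq\l$, the subset $\bar{\O_\l}+\n$ is isomorphic to $\bar{\O_\l}\times\n$ via the direct sum decomposition, and in particular is closed in $\p$, hence in $\g$. By Theorem~\ref{t:CM}, $\O_\p$ is dense in $\O_\l+\n$, whence $\bar{\O_\p}=\overline{\O_\l+\n}\subseteq \bar{\O_\l}+\n$. For the reverse inclusion, any $x+u$ with $x\in\bar{\O_\l}$ and $u\in\n$ is a limit of elements $x_i+u$ with $x_i\in\O_\l$, hence lies in $\overline{\O_\l+\n}=\bar{\O_\p}$.

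For the second equality $\bar{\O_\p}\cap\O_\g=\O_\p$, the inclusion $\supseteq$ is trivial, so take $y\in\bar{\O_\p}\cap\O_\g$ and use the first equality to write $y=x+u$ with $x\in\bar{\O_\l}$ and $u\in\n$. I would argue by contradiction that $x\in\O_\l$. Otherwise $\Omega_\l:=L.x$ is an $L$-orbit contained in $\bar{\O_\l}\setminus\O_\l$, so by irreducibility of $\bar{\O_\l}$ we have $\dim\Omega_\l<\dim\O_\l$. Set $\Omega_\g:=\mathrm{Ind}_{\l}^{\g}(\Omega_\l)$ and $\Omega_\p:=\Omega_\g\cap(\Omega_\l+\n)$. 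Applying Theorem~\ref{t:CM} to both orbits yields $\mathrm{codim}_\g\,\Omega_\g=\mathrm{codim}_\l\,\Omega_\l>\mathrm{codim}_\l\,\O_\l=\mathrm{codim}_\g\,\O_\g$, so $\dim\Omega_\g<\dim\O_\g$. By Remark~\ref{rk:ind}\,(3), $\Omega_\g\subseteq \bar{\O_\g}\setminus\O_\g$, and applying the first equality to $\Omega_\l$ gives $y\in\bar{\Omega_\l}+\n=\bar{\Omega_\p}\subseteq\bar{\Omega_\g}$. Hence $y$ lies in an orbit of dimension strictly less than $\dim\O_\g$, contradicting $y\in\O_\g$. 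Therefore $x\in\O_\l$, so $y\in\O_\l+\n$ and $y\in\O_\g\cap(\O_\l+\n)=\O_\p$.

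For the last equality $\bar{\O_\g}=G.(\bar{\O_\l}+\n)$, since $\O_\g$ is a single $G$-orbit containing $\O_\p$ we have $\O_\g=G.\O_\p$, and using the first equality the chain of inclusions $\O_\g=G.\O_\p\subseteq G.\bar{\O_\p}=G.(\bar{\O_\l}+\n)\subseteq \bar{\O_\g}$ reduces the problem to showing that $G.\bar{\O_\p}$ is already closed in $\g$. The crucial ingredient is that $\bar{\O_\p}$ is $P$-stable (being the closure of a $P$-orbit by Theorem~\ref{t:CM}) and closed in the vector space $\p$, hence in $\g$. Therefore $G\times^{P}\bar{\O_\p}$ embeds as a closed subscheme of $G\times^{P}\g\simeq (G/P)\times\g$, and the restriction to this subscheme of the second projection is proper because $G/P$ is projective; its image, which coincides with $G.\bar{\O_\p}$, is therefore closed in $\g$. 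The main subtlety lies in step~2, where one needs the strict dimension drop $\dim\Omega_\g<\dim\O_\g$ upon passing from $\O_\l$ to a smaller $L$-orbit, and this is supplied precisely by the codimension-preserving statement in Theorem~\ref{t:CM}.
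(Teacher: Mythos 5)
Your proof is correct throughout, but the strategies in parts (2) and (3) diverge from the paper's. For the second equality, the paper's argument is a short dimension count on stabilizers: for $x\in\bar{\O_\p}\setminus\O_\p$ one has $\dim P.x<\dim P.e$, hence $\dim\g^x\ge\dim\p^x>\dim\p^e$, and the key identity $\dim\p^e=\dim\g^e$ (a consequence of the codimension formula in Theorem~\ref{t:CM}) then forces $\dim\g^x>\dim\g^e$, so $x\notin\O_\g$. You instead decompose $x+u$, consider the smaller $L$-orbit $\Omega_\l=L.x$, pass to the induced orbit $\Omega_\g$, and use the codimension formula to see $\dim\Omega_\g<\dim\O_\g$, then contradict $y\in\O_\g$; this is valid, but it re-derives, in a roundabout way, exactly the dimension drop that the paper extracts directly from the stabilizer chain $\g^x\supseteq\p^x$. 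For the third equality, the paper simply cites \cite[Thm.\,7.1.3]{CMa}, whereas you supply a self-contained argument: $\bar{\O_\p}$ is a $P$-stable closed cone in $\p$, so $G\times^P\bar{\O_\p}$ sits as a closed subscheme inside $G\times^P\g\simeq(G/P)\times\g$, and properness of the second projection (from projectivity of $G/P$) makes the image $G.\bar{\O_\p}$ closed. That is a genuine improvement in self-containment and is the standard ``collapsing'' argument that the reference itself relies on. In short: part (1) matches the paper in spirit; part (2) is longer than necessary but sound; part (3) replaces a citation with a clean direct proof.
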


\begin{proof} 
The first equality is obvious since $\O_\p$ is dense in $\O_\l + \n$ 
by definition.

Next, the inclusion $\O_\p \subset \bar{\O_\p}\cap\O_\g$ is clear. 
To show the other inclusion, assume that there is $x\in \bar{\O_\p}\cap\O_\g$, 
with $x\not\in \O_\p$. A contradiction is expected. 
Since $x \in \bar{\O_\p}\setminus \O_\p$, $\dim P.x < \dim P.e$. Hence, 
$$
\dim \g^{x} \ge \dim \p^{x} > \dim \p^{e} =\dim\g^{e}.
$$
As a consequence, $x$ is not in $\O_\g$, whence the contradiction. 

A proof of the last equality can be found in \cite[Thm.\,7.1.3]{CMa}. 
\end{proof}

For jet schemes, we have the following generalization.  

\begin{lemma} \label{l2:ind} 
We have


\begin{itemize} 
\item[1)] $\bar{\J_m(\O_\p)} = \bar{\J_m(\O_\l)} +\n_m$, 


\item[2)] $\bar{\J_m(\O_\p)} \cap \J_m(\O_\g) = \J_m(\O_\p) =
(\J_m(\bar{\O_\l}) + \n_m)\cap \J_m(\O_\g)$, 


\item[3)] $\bar{\J_m(\O_\g)}$ 
is the closure of ${G_m.\bar{\J_m(\O_\p)}}$. 
\end{itemize}  
\end{lemma}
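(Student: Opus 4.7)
The three parts are jet-scheme analogues of Lemma~\ref{l:ind}, and the plan is to lift each assertion using the product decomposition $\bar{\O_\p}=\bar{\O_\l}+\n\simeq\bar{\O_\l}\times\n$ (valid because $\l\cap\n=0$) together with parts (1) and (2) of Lemma~\ref{l:jet} and with Lemma~\ref{l:gpe}. By Lemma~\ref{l:jet}(2) this decomposition lifts to $\J_m(\bar{\O_\p})\simeq\J_m(\bar{\O_\l})\times\J_m(\n)$, which, under the natural embedding $\l_m\oplus\n_m\hookrightarrow\g_m$, is $\J_m(\bar{\O_\l})+\n_m$ inside $\g_m$; similarly $\J_m(\O_\l+\n)=\J_m(\O_\l)+\n_m$.

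For (1), the plan is a double inclusion. For ``$\subset$'', I would use that any $\gamma\in\J_m(\O_\p)\subset\J_m(\O_\l+\n)$ decomposes as $\gamma=\gamma_\l+\gamma_\n$ with $\gamma_\l\in\J_m(\O_\l)$ and $\gamma_\n\in\n_m$, so $\J_m(\O_\p)\subset\J_m(\O_\l)+\n_m\subset\bar{\J_m(\O_\l)}+\n_m$, and closing up yields the inclusion. For ``$\supset$'', I would use that $\O_\p$ is dense open in $\O_\l+\n$ by Theorem~\ref{t:CM}, so Lemma~\ref{l:jet}(1) makes $\J_m(\O_\p)$ open in $\J_m(\O_\l+\n)=\J_m(\O_\l)\times\n_m$. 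Since $\J_m(\O_\l)$ is smooth and irreducible by Proposition~\ref{p:smo} and $\n_m$ is an affine space, this product is irreducible, hence $\J_m(\O_\p)$ is dense in it, and passing to closures gives $\bar{\J_m(\O_\l)}+\n_m\subset\bar{\J_m(\O_\p)}$.

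For (2), the key input is the identity $\bar{\O_\p}\cap\O_\g=\O_\p$ from Lemma~\ref{l:ind}. The plan is first to establish the sharper statement $\J_m(\bar{\O_\p})\cap\J_m(\O_\g)=\J_m(\O_\p)$: any $\gamma$ in this intersection is an $m$-jet of $\bar{\O_\p}$ with $\gamma(0)\in\O_\g$, so $\gamma(0)\in\bar{\O_\p}\cap\O_\g=\O_\p$, and Lemma~\ref{l:jet}(1) applied to the open subscheme $\O_\p\subset\bar{\O_\p}$ then forces $\gamma\in\J_m(\O_\p)$. The first equality in~(2) follows because $\bar{\J_m(\O_\p)}\subset\J_m(\bar{\O_\p})$ (the latter is closed in $\g_m$), while the second equality uses the identification $\J_m(\bar{\O_\l})+\n_m=\J_m(\bar{\O_\p})$ from the previous paragraph; the reverse inclusions are immediate from~(1) and $\O_\p\subset\O_\g$.

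For (3), I would fix $x\in\O_\p\subset\O_\g$ and view it, via $\iota_{\g,m}$, as a constant arc in $\J_m(\O_\p)\subset\bar{\J_m(\O_\p)}$. Lemma~\ref{l:gpe} gives $\J_m(\O_\g)=\J_m(G.x)=G_m.x\subset G_m.\bar{\J_m(\O_\p)}$, so $\bar{\J_m(\O_\g)}\subset\overline{G_m.\bar{\J_m(\O_\p)}}$. Conversely, $\bar{\J_m(\O_\p)}\subset\bar{\J_m(\O_\g)}$ since $\O_\p\subset\O_\g$, and $G_m$ preserves $\J_m(\O_\g)$ by Lemma~\ref{l:gpe} (hence also its closure), so $G_m.\bar{\J_m(\O_\p)}\subset\bar{\J_m(\O_\g)}$ and closing up yields the other inclusion. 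The main technical step is the reverse inclusion in~(1), which relies on the density of $\J_m(\O_\p)$ in the irreducible variety $\J_m(\O_\l)+\n_m$; once this is in hand, parts~(2) and~(3) amount to bookkeeping with Lemma~\ref{l:ind} and Lemma~\ref{l:gpe}.
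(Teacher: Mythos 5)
Your proof is correct. Parts (2) and (3) follow essentially the same route as the paper: for (2), reduce to the point-set identity $\bar{\O_\p}\cap\O_\g=\O_\p$ from Lemma~\ref{l:ind} combined with Lemma~\ref{l:jet},(1); for (3), anchor at a constant jet $x\in\O_\p$ and use Lemma~\ref{l:gpe} to compare the two saturations, exactly as the authors do.

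The genuinely different part is (1). The paper proves the inclusion $\bar{\J_m(\O_\p)}\subset\bar{\J_m(\O_\l)}+\n_m$ and then pins down equality by a dimension count: they derive $\p^e=\g^e$ for $e\in\O_\p$ from the codimension identity in Theorem~\ref{t:CM}, which, via Lemma~\ref{l:gpe} and Proposition~\ref{p:smo}, shows $\dim\bar{\J_m(\O_\p)}=\dim(\J_m(\O_\l)+\n_m)$, and then irreducibility forces equality. You instead lift the density statement directly: $\O_\p$ is a nonempty open subset of $\O_\l+\n$, so by Lemma~\ref{l:jet},(1), $\J_m(\O_\p)$ is open in $\J_m(\O_\l+\n)=\J_m(\O_\l)\times\n_m$, which is irreducible (by Proposition~\ref{p:smo} and the product formula of Lemma~\ref{l:jet},(2)); hence $\J_m(\O_\p)$ is dense and the closures agree. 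Your route is shorter and more transparent in that it avoids the centralizer computation entirely, using only functoriality of $\J_m$ and the openness of $\O_\p$; the trade-off is that the paper's argument, as a by-product, records the useful fact $\p^e=\g^e$ for $e\in\O_\p$, which it anyway implicitly relies on in the subsequent parts.
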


\begin{proof}


1) Since $\O_\p \subset \O_\l +\n$, we get 
$\J_m(\O_\p) \subset  \bar{\J_m(\O_\l)} +\n_m$ because $ \bar{\J_m(\O_\l)} +\n_m$ is closed. 
Let $e'\in \O_\l$ and $x \in\n$ be such that 
$e:=e'+x$ is in $\O_\p$. From the above inclusion, we deduce that  
$$
\dim \p -\dim \p^{e} \le \dim \l -\dim \l^{e'} +\dim \n = \dim\p-\dim\g^{e},
$$
because $\dim\l^{e'}=\dim\g^{e}$ by Theorem \ref{t:CM}. 
Since $\dim \p^{e} \le\dim \g^{e}$, we get $\p^{e}= \g^{e}$, 
whence $\dim\bar{\J_m(\O_\p)}=\dim(\J_m(\O_{\l})+\n_m)$ by Lemma \ref{l:gpe} and Proposition \ref{p:smo}. 
So $\bar{\J_m(\O_\p)}$ and $\bar{\J_m(\O_\l)}+\n_m$ are irreducible varieties of the same dimension, 
and the equality follows. 

2) Taking into account Lemma \ref{l:gpe} and Proposition \ref{p:smo}, the result follows from the
same arguments as in the proof of Lemma 5.3, second equality.

3) By Lemma \ref{l:gpe}, we have 
$$
\J_m(\O_\g)=G_m.\J_m(\O_\p) \subset G_m.\bar{\J_m(\O_\p)}. 
$$
As a result, $\bar{\J_m(\O_\g)}$ is contained in the closure of 
$G_m.\bar{\J_m(\O_\p)}$. On the other hand, since $\bar{\J_m(\O_\g)}$ is 
$G_m$-stable, we get
$$
G_m.\bar{\J_m(\O_\p)}\subset \bar{\J_m(\O_\g)}.
$$ 
So the 
closure of $G_m.\bar{\J_m(\O_\p)}$ is contained in $\bar{\J_m(\O_\g)}$, 
whence the expected equality.
\end{proof}

\begin{question} \label{q:ind} 
For $m=0$, ${G_m.\bar{\J_m(\O_\p)}}$ is closed (cf.~Lemma \ref{l:ind}) 
essentially because $G/P$ is compact.   
For $m\ge 1$, $G_m/P_m$ is a trivial fibration over $G/P$ with $m$-dimensional 
affine fiber. Can we show nevertheless that $G_m.(\bar{\J_m(\O_\l})+\n_m)$ 
is closed, in other words that 
$\bar{\J_m(\O_\g)}=G_m.(\bar{\J_m(\O_\l})+\n_m)$?
\end{question}

\begin{thm} \label{t:ind}
Let $\l$ be a Levi subalgebra of $\g$, $\O_\l$ a nilpotent orbit of $\l$ 
and $\O_\g$ the induced nilpotent orbit of $\g$ from $\O_\l$. If $\O_\l$ 
verifies \RC$_2(m)$ for some $m\in\N^*$, then $\O_\g$ also verifies \RC$_2(m)$. 
\end{thm}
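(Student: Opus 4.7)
The plan is to construct an explicit nilpotent orbit $\Omega_\g$ in $\bar{\O_\g}\setminus \O_\g$ witnessing \RC$_2(m)$ for $\O_\g$ using the witness for $\O_\l$, and then use Lemma \ref{l2:ind} to translate the RC-inequality on the level of jet fibers. Let $\Omega_\l\subset \bar{\O_\l}\setminus\O_\l$ be the $L$-orbit furnished by \RC$_2(m)$ for $\O_\l$, so $\dim \pi_{\bar{\O_\l},m}^{-1}(\Omega_\l)\ge (m+1)\dim\O_\l$. Set $\Omega_\g:={\rm Ind}_\l^\g(\Omega_\l)$; by Remark \ref{rk:ind}(3) it lies in $\bar{\O_\g}\setminus\O_\g$. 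Set $\Omega_\p:=\Omega_\g\cap(\Omega_\l+\n)$, the dense $P$-orbit in $\Omega_\l+\n$ associated to the induction applied to $\Omega_\l$.

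The construction of the witness subscheme is as follows. Write $\mathcal{V}_\l:=\pi_{\bar{\O_\l},m}^{-1}(\Omega_\l)\subset \bar{\J_m(\O_\l)}$. Since $\l\cap\n=0$, the sum $\mathcal{V}:=\mathcal{V}_\l+\n_m\subset \bar{\J_m(\O_\l)}+\n_m=\bar{\J_m(\O_\p)}$ (by Lemma \ref{l2:ind}(1)) is direct, so $\dim\mathcal{V}=\dim\mathcal{V}_\l+(m+1)\dim\n$. One checks that $\mathcal{V}$ is $P_m$-stable: $L_m$ preserves $\bar{\J_m(\O_\l)}$ and $\n_m$ separately, while for $u\in U_m$ the inclusion $[\n,\l]\subset\n$ forces ${\rm Ad}(u)\,v\in v+\n_m$ for any $v\in\J_m(\l)$, so the $\l$-component is $U_m$-invariant. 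Define $\mathcal{W}:=\mathcal{V}\cap\pi_{\bar{\O_\g},m}^{-1}(\Omega_\p)$, which is $P_m$-stable; as $\Omega_\p$ is dense open in $\Omega_\l+\n$, $\mathcal{W}$ is dense open in $\mathcal{V}$ and so $\dim\mathcal{W}=\dim\mathcal{V}_\l+(m+1)\dim\n\ge (m+1)(\dim\O_\l+\dim\n)$.

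Now $G\cdot\Omega_\p=\Omega_\g$ yields $G_m\cdot\mathcal{W}\subset\pi_{\bar{\O_\g},m}^{-1}(\Omega_\g)$. I would estimate $\dim G_m\cdot\mathcal{W}$ via the action map
$\Phi\colon G_m\times^{P_m}\mathcal{W}\to G_m\cdot\mathcal{W}$,
whose source has dimension $(m+1)\dim\n+\dim\mathcal{W}$. The crux is to show $\Phi$ is generically finite, so that this dimension passes to the image. For this, observe that the ambient action map $\widetilde\Phi\colon G_m\times^{P_m}\bar{\J_m(\O_\p)}\to \bar{\J_m(\O_\g)}$ is dominant by Lemma \ref{l2:ind}(3), with source and target both of dimension $(m+1)\dim\O_\g=(m+1)(\dim\O_\l+2\dim\n)$ (using $\dim\bar{\J_m(\O_\p)}=(m+1)(\dim\O_\l+\dim\n)$, $\dim G_m/P_m=(m+1)\dim\n$, and the codimension preservation in Theorem \ref{t:CM}). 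Hence $\widetilde\Phi$ is generically finite, and a standard fiber-dimension argument reduces the genericity for $\Phi$ to exhibiting a single $w\in\mathcal{W}$ with $P_m\cdot w = G_m\cdot w\cap\bar{\J_m(\O_\p)}$ (at the level of identity components), which follows from $\p^e=\g^e$ for $e\in\O_\p$ (the equality established in the proof of Lemma \ref{l2:ind}).

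Combining, $\dim\pi_{\bar{\O_\g},m}^{-1}(\Omega_\g)\ge\dim G_m\cdot\mathcal{W}=(m+1)\dim\n+\dim\mathcal{W}\ge (m+1)(\dim\O_\l+2\dim\n)=(m+1)\dim\O_\g$, which is exactly condition \RC$_2(m)$ for $\O_\g$. The principal difficulty is the generic finiteness step: one must ensure that restricting $\widetilde\Phi$ to $G_m\times^{P_m}\mathcal{W}$ does not force positive-dimensional generic fibers, since $\pi_{\bar{\O_\g},m}^{-1}(\Omega_\g)$ lies entirely in the singular stratum of $\bar{\J_m(\O_\g)}$ where $\widetilde\Phi$'s fibers could a priori be larger than expected.
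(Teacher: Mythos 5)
Your setup is sound: you correctly take the witness orbit $\Omega_\l$, induce to get $\Omega_\g \subset \bar{\O_\g}\setminus\O_\g$, form $\mathcal{V} = \pi_{\bar{\O_\l},m}^{-1}(\Omega_\l) + \n_m$ and check it is $P_m$-stable with the right dimension, and then try to push $\mathcal{W}$ up to $\g$ via the associated fiber bundle $G_m\times^{P_m}\mathcal{W}$. The inequality you aim for, $\dim G_m.\mathcal{W}\ge (m+1)\dim\n+\dim\mathcal{W}$, is exactly what is needed. But the generic-finiteness step — the ``crux'' you flag yourself — has a genuine gap, and it is precisely the gap the paper's argument is built to avoid.

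The concrete problems. First, the identity $P_m.w = G_m.w\cap\bar{\J_m(\O_\p)}$ is too strong even for $m=0$: already $\Omega_\g\cap\bar{\O_\p}$ typically decomposes into several $P$-orbits, because several $L$-orbits in $\bar{\O_\l}$ can induce the same $\Omega_\g$ (e.g.\ in $\sl_4$ with Levi of type $\bf{A_1}\times\bf{A_1}$, the partitions $((2),(1^2))$ and $((1^2),(2))$ both induce $(2,2)$). Finitely many $P_m$-orbits would still give finiteness, but then one must actually prove there are finitely many; the deduction ``follows from $\p^e=\g^e$'' only controls the \emph{constant-term} component of a stabilizing $\sigma\in G_m$, not its higher-order coefficients. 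Second, generic finiteness of $\widetilde\Phi$ is a statement at the generic point of $\bar{\J_m(\O_\g)}$, which lies over $\O_\g$, whereas your $\mathcal{W}$ lies over the lower stratum $\Omega_\g$ where fibre dimensions can jump; no ``standard fibre-dimension argument'' transports finiteness from the open stratum to this closed one. Everything in your argument up to the inequality holds, but the dimension estimate on $G_m.\mathcal{W}$ is unproven.

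The paper avoids this entirely by a deliberate trick. After reducing (via transitivity of induction and the product decomposition into simple factors) to the case of a \emph{maximal} Levi $\l$ in a simple $\g$, it works with the one-dimensional enlargement $\z(\l)+X+\n_m$, where $X$ is a maximal-dimensional component of $\pi_{\bar{\O_\l},m}^{-1}(\bar{\Omega_\l})$. Lemma \ref{l:dimZ-2} then computes $\dim G_m.(\z(\l)+X+\n_m) = \dim X + 2\dim\n_m + 1$ by showing the fibre of the bundle map over the \emph{semisimple} generator $z\in\z(\l)$ is finite: there one has the luxury of the Jordan decomposition and the fact that the connected centralizer of $z$ sits in $P_m$, so the fibre is a finite set of $N_{G_m}(\C z)/C_{G_m}(z)$-cosets. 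Finally, the extra dimension is killed by intersecting with the hypersurface $\{\phi^{(0)}=0\}$, where $\phi$ is the Casimir; Lemma \ref{l:phi} (plus $G_m$-invariance of $\phi^{(0)}$, Lemma \ref{l:inv}) shows this nullvariety lands inside $\pi_{\bar{\O_\g},m}^{-1}(\bar{\Omega_\g})$. So the entire delicacy about fibres over nilpotent jets that your approach must confront is replaced by a single clean computation over a semisimple element. If you want to salvage a direct argument along your lines, you would need a jet-level statement controlling $\{\sigma\in G_m : \sigma^{-1}.w\in\mathcal{W}\}$ modulo $P_m$ for one good $w$, and that is the piece that is currently missing.
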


The rest of the section will be devoted to the proof of Theorem \ref{t:ind}.  

\begin{defi} \label{d:max}
Let $\l$ be a Levi subalgebra of $\g$. 
We say that $\l$ is a {\em maximal Levi subalgebra of $\g$} if 
the center of $[\g,\g]\cap\l$ has dimension one. 
\end{defi}

Let us first assume that $\g$ is simple and that 
$\l$ is a maximal Levi subalgebra of $\g$.  
Thus, the center $\z(\l)$ of $\l$ has dimension one. 
Let us fix a Cartan subalgebra $\h$ in $\l$ and $\Delta$ 
the root system relative to $(\g, \h)$. There exists a simple root system $\Pi$
and a subset $\Pi' \subseteq \Pi$ verifying $\card( \Pi \setminus \Pi')=1$ such that 
$\l$ is the sum of $\h$ and all the $\alpha$-root spaces for $\alpha$ in the root subsystem generated by $\Pi'$. 
Define $z$ to be the element in $\h$ such that  
$$
\alpha(z)=0\; \text{ if }\; \alpha \in \Pi'  \quad \text{ and } \quad 
\alpha(z)=1\; \text{ if } \; \alpha \in \Pi \setminus \Pi'.
$$  
Then $z$ is a generator of $\z(\l)$ and all the eigenvalues 
of $\ad z$ are integers. 

Let $m\in\N$.  Then $\ad z$ induces a $\Z$-grading on $\g_m$, 
$$
\g_m = \bigoplus_{k\in \Z} \g_m(k)\qquad \text{ with }\qquad  
\g_m(k):=\{y \in \g_m \; |\; [z,y] = k y\}.
$$
Set 
$$
\p= \bigoplus_{k \ge 0} \g_0(k) 
\qquad \text{ and } \qquad
\n =\bigoplus_{k >0} \g_0(k).
$$
Then $\p$ is a parabolic subalgebra of $\g$ where $\l = \g_{0}(0)$ is a Levi factor,
and whose nilpotent radical is $\n$.
Denote by $P,L$ and $U$ the connected closed subgroups of $G$ whose Lie algebra is $\p$, 
$\l$ and $\n$ respectively.

Observe that
$$
\l_m = \z(\l)_m\oplus[\l_m,\l_m] =\g_m(0) , \qquad 
\p_m= \bigoplus_{k \ge 0} \g_m(k), \qquad 
\n_m =\bigoplus_{k >0} \g_m(k).
$$ 

\begin{rem} \label{rk:gr} 
Clearly, for any nonzero integer $k$, 
we have $[z,\g_m(k)]=\g_m(k)$. In particular, 
$\g_m(0)= (\g_m)^{z}= \mathfrak{n}_{\g_m}(\C z)$ where $\mathfrak{n}_{\g_m}(\C z)$ is the 
normalizer of $z$ in $\g_m$. Also, if $x \in \g_m(k)$, with $k\in \N^*$, 
then $x$ is ${\rm ad}$-nilpotent, and 
$e^{\ad x} z = z + [x,z]= z-kx.$
\end{rem}

\begin{lemma} \label{l:gr} 
Let $\lambda \in\C^*$, $x \in \g_m(0)$ and $y \in \n_m$. 
If $x$ is ${\rm ad}$-nilpotent in $\g_m$ then there exists $\tau \in U_m$ 
such that $\tau (\lambda z + x + y)=\lambda z +x$. 
\end{lemma}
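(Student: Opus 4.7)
The plan is to construct $\tau$ as a finite product $\exp(u_N)\cdots\exp(u_1)\in U_m$, where each $u_k\in\g_m(k)$ is chosen so that left multiplication by $\exp(u_k)$ eliminates the component of the current ``error'' in degree $k$ without disturbing the lower degrees. Since $\g$ is finite-dimensional, the $\ad z$-weights on $\g_m$ lie in a finite range $-N\le k\le N$, so the procedure must terminate after at most $N$ steps.

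The main linear-algebraic input is the following invertibility fact. Because $x\in\g_m(0)$, we have $[z,x]=0$, so the operators $\ad z$ and $\ad x$ commute on $\g_m$; both preserve the $\ad z$-grading. The operator $\varphi:=\lambda\ad z-\ad x$ therefore restricts on $\g_m(k)$ to $\lambda k\cdot\id-\ad x$. For $k\ge 1$, $\lambda k$ is a nonzero scalar and $\ad x|_{\g_m(k)}$ is nilpotent (as a restriction of the nilpotent operator $\ad x$ to a stable subspace), and the two commute, so $\varphi|_{\g_m(k)}$ is invertible.

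The induction runs as follows. Take $\tau_0=\id$, so $\tau_0(\lambda z+x+y)\in\lambda z+x+\bigoplus_{j\ge 1}\g_m(j)$. Assume $\tau_{k-1}\in U_m$ satisfies
$$\tau_{k-1}(\lambda z+x+y)=\lambda z+x+v_k+v',\qquad v_k\in\g_m(k),\; v'\in\bigoplus_{j\ge k+1}\g_m(j).$$
For any $u\in\g_m(k)$, the exponential terminates (since $\n_m$ is ad-nilpotent), and iterated brackets of length $\ge 2$ land in $\bigoplus_{j\ge 2k}\g_m(j)\subset\bigoplus_{j\ge k+1}\g_m(j)$. A direct grading count therefore yields
$$e^{\ad u}(\lambda z+x+v_k+v')\equiv\lambda z+x+v_k+[u,\lambda z+x]\pmod{\bigoplus_{j\ge k+1}\g_m(j)}.$$
Since $[u,\lambda z+x]=-\varphi(u)$, invertibility of $\varphi|_{\g_m(k)}$ yields a unique $u_k\in\g_m(k)$ with $\varphi(u_k)=v_k$. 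Setting $\tau_k:=\exp(u_k)\tau_{k-1}\in U_m$ then advances the induction via $\Ad(\tau_k)=e^{\ad u_k}\circ\Ad(\tau_{k-1})$. After $N$ steps the error lies in $\bigoplus_{j\ge N+1}\g_m(j)=0$, so $\tau:=\tau_N$ does the job. No step is a serious obstacle; only the bookkeeping of grading shifts produced by the exponentials and the verification that $\varphi$ is invertible on each positive graded piece (which rests on the ad-nilpotence hypothesis on $x$) require any care.
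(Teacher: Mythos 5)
Your argument is correct and works, up to one sign slip: for $u\in\g_m(k)$ we have $[u,\lambda z+x]=-\lambda k\,u-[x,u]=-(\lambda k\,\mathrm{id}+\ad x)(u)$, so the operator you want to invert is $\lambda\,\ad z+\ad x$, not $\lambda\,\ad z-\ad x$. This is harmless, because scalar plus or minus a commuting nilpotent is invertible either way, and you only use invertibility.

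Your route is genuinely cleaner than the paper's. The paper's proof proceeds by a nested double induction: at each degree $p$ it uses only the $\ad z$-part of $[y_p,\lambda z+x]$ to cancel the leading term of $y$ (choosing the ad hoc coefficient $1/(p\lambda)$ so that $[y_p,\lambda z]/(p\lambda)=-y_p$), and is then left with a residue $(1/p\lambda)[y_p,x]$ still in degree $p$; it controls this by an inner induction over the decreasing filtration $\bigl((\ad x)^q\g_m(p)\bigr)_q$, which terminates because $\ad x$ is nilpotent. You instead absorb both the $\ad z$- and $\ad x$-contributions at once: since $[z,x]=0$, the operator $\lambda k\,\mathrm{id}+\ad x|_{\g_m(k)}$ is ``nonzero scalar plus commuting nilpotent,'' hence invertible, and solving $(\lambda k\,\mathrm{id}+\ad x)(u_k)=v_k$ kills the entire degree-$k$ error in one step. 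This replaces the paper's inner induction by a single linear-algebra fact and makes the role of the hypotheses ($\lambda\ne 0$, $x$ ad-nilpotent, $[z,x]=0$) completely transparent. The grading bookkeeping (iterated brackets of $u\in\g_m(k)$ land in degrees $\ge 2k\ge k+1$) you handle correctly, and $\exp(u_k)\in U_m$ since $u_k\in\n_m$; both proofs terminate for the same reason, namely that the $\ad z$-grading is bounded above.
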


\begin{proof} 
For some $p >0$, $y= y_p +t$ with $y_p \in \g_m(p)$ 
and $t \in \sum\limits_{k \ge p+1}\g_m(k)$. Since $x$ is ${\rm ad}$-nilpotent, the 
sequence $\left((\ad x)^n \g_m(p)\right)_{n\in\N}$ is decreasing and 
$(\ad x)^n \g_m(p)=\{0\}$ for $n \ge \dim \g_m(p)$. Let $q\in\N$ be such that 
$y_p \in (\ad x)^q \g_m(p).$ Then 
\begin{eqnarray*}
e^{(1/p \lambda)  \ad  y_p} (\lambda z + x +y)
&=& \lambda z + e^{(1/p \lambda)  \ad  y_p} x 
+e^{(1/p \lambda)  \ad y_p} t  \\
&=& \lambda z  + x + (1/p \lambda)  [y_p,x] +t' = \lambda z  + x + y' 
\end{eqnarray*}
with $t' \in \sum\limits_{k \ge p+1}\g_m(k)$, $y':=  (1/p \lambda)  [y_p,x] +t' $ and  
$$
(1/p \lambda)  [y_p,x] \in (\ad x)^{q+1} \g_m(p).
$$
Therefore we may start again with $y'$. After a finite number of steps, 
we come to an element in $\sum\limits_{k \ge p+1}\g_m(k)$. Then we can start 
again with $p+1$ instead of $p$ and,  
after a finite number of steps, we come to an element of the expected 
form $\lambda z+x$. 
\end{proof}

\begin{lemma} \label{l:dimZ-2}
Let $\Omega$ be an $L$-orbit contained in $\bar{\O_\l}$ 
and let $X$ be an irreducible component of $\pi_{\bar{\O_\l},m}^{-1}(\bar{\Omega})$. 
Then 
$$
\dim G_m.(\z(\l) + X +\mathfrak{u}_m)= \dim X + 2 \dim \mathfrak{u}_m +1.
$$
\end{lemma}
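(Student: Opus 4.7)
The plan is to compute $\dim G_m \cdot (\z(\l) + X + \n_m)$ by first invoking Lemma~\ref{l:gr} to absorb the $\n_m$-summand and then realizing the resulting $G_m$-saturation as the image of a generically finite map out of a quotient of the form $G_m \times^{L_m} (\z(\l)^* \times X)$, where $\z(\l)^* := \z(\l) \setminus \{0\}$.

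For the first step, any $x \in X \subset \J_m(\bar{\O_\l}) \subset \l_m = \g_m(0)$ has constant term $x_0 \in \bar{\O_\l}$, hence nilpotent in $\l$ and $\ad$-nilpotent in $\g$. Writing $\ad x$ as an $R$-linear endomorphism of $\g_m = \g \otimes R$ with $R = \C[t]/(t^{m+1})$, its constant coefficient $\ad x_0$ is nilpotent, so $\ad x$ is nilpotent on $\g_m$ (since $R$ is local with nilpotent maximal ideal). Lemma~\ref{l:gr} then gives, for $\lambda \in \C^*$ and $x \in X$, the equality $U_m \cdot (\lambda z + x) = \lambda z + x + \n_m$. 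Therefore $G_m \cdot (\z(\l)^* + X + \n_m) = G_m \cdot (\z(\l)^* + X)$; and since $\z(\l)^*$ is dense in $\z(\l)$, the $G_m$-saturations of $\z(\l) + X + \n_m$ and of $\z(\l)^* + X$ share the same closure, and hence the same dimension.

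For the second step, I would consider
\[
\Phi \colon G_m \times^{L_m} (\z(\l)^* \times X) \longrightarrow \g_m, \qquad [g,(\lambda, x)] \longmapsto g(\lambda z + x),
\]
where $L_m$ acts trivially on $\z(\l)^*$ (since $z \in \z(\l)$) and naturally on $X$, which is $L_m$-stable because the connected group $L_m$ fixes each irreducible component of $\pi_{\bar{\O_\l},m}^{-1}(\bar{\Omega})$. The image of $\Phi$ is exactly $G_m \cdot (\z(\l)^* + X)$, and its source has dimension $(m+1)(\dim G - \dim L) + 1 + \dim X = 2\dim\n_m + 1 + \dim X$. The crucial point is that $\Phi$ is generically finite. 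If $g(\lambda z + x) = g'(\lambda' z + x')$ and $h := g^{-1}g'$, then $h(\lambda' z + x') = \lambda z + x$ in $\l_m$. I would identify the additive Jordan decomposition of $\ad(\lambda' z + x') \in \mathrm{End}_\C(\g_m)$: since $[z,x'] = 0$ (as $x' \in \g_m(0)$), the decomposition commutes, with semisimple part $\lambda'\ad z$ (having integer eigenvalues) and nilpotent part $\ad x'$. Conjugation by $h$ preserves this decomposition, yielding $\lambda' \ad(h(z)) = \lambda \ad z$; since $\g$ is simple, $\g_m$ has trivial center, so $\ad$ is injective on $\g_m$ and $h(z) = (\lambda/\lambda')z$. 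This places $h$ in the normalizer $N_{G_m}(\C z)$, whose quotient by $L_m = G_m^z$ is finite (the scalar $\lambda/\lambda'$ must preserve the multiset of $\ad z$-eigenvalues). Modulo the $L_m$-action used in forming $\times^{L_m}$, the fiber of $\Phi$ is therefore finite, giving $\dim\Im\Phi = 2\dim\n_m + 1 + \dim X$, which together with the first step yields the claim.

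The main obstacle is the generic-finiteness step: since $\g_m$ is not reductive, one cannot invoke an intrinsic Jordan decomposition inside $\g_m$, and must instead work with the $\C$-linear Jordan decomposition of $\ad \xi \in \mathrm{End}_\C(\g_m)$, verifying by hand that the decomposition of $\ad(\lambda z + x)$ has the expected commuting semisimple and nilpotent summands and transports correctly under the adjoint action of $G_m$.
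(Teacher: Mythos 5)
Your proof is correct and rests on the same two ingredients as the paper's: Lemma~\ref{l:gr} to absorb $\n_m$, and a Jordan-decomposition argument applied to $\ad$ in $\mathrm{End}_\C(\g_m)$ (together with $\z(\g_m)=0$) to force a putative overlap element $h$ into $N_{G_m}(\C z)$, whose component group is finite since $\mathfrak{n}_{\g_m}(\C z)=\g_m(0)=\l_m$ by Remark~\ref{rk:gr}. The packaging differs slightly: the paper keeps $C=\z(\l)+X+\n_m$ (which is \emph{closed} and $P_m$-stable) and forms the $P_m$-bundle $G_m\times_{P_m}C$, computing the single fiber $\psi^{-1}(z)$ and concluding by irreducibility; you first use Lemma~\ref{l:gr} to collapse $\z(\l)^*+X+\n_m$ down to $\z(\l)^*+X$ and then work with an $L_m$-bundle over the quasi-affine $L_m$-space $\z(\l)^*\times X$, establishing finiteness of all fibers. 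Both give the same count $2\dim\n_m+1+\dim X$. What the paper's $P_m$-set-up buys is that $C$ is closed, so $G_m\times_{P_m}C$ is unproblematic and one only needs one finite fiber; your $L_m$-set-up is equally valid, but the associated bundle is formed over a quasi-affine base, and if one wants to sidestep questions about its existence as a variety one can simply replace $G_m\times^{L_m}(\z(\l)^*\times X)$ by the plain morphism $G_m\times\z(\l)^*\times X\to\g_m$ and subtract $\dim L_m$ at the end, since you have in any case shown that the fibers are finite unions of $L_m$-cosets. You also make explicit (via the local ring $R=\C[t]/(t^{m+1})$) why elements of $X\subset\J_m(\bar{\O_\l})$ are $\ad$-nilpotent in $\g_m$, a point the paper only asserts.
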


\begin{proof} Set 
$$
C:=  \z(\l) + X +\n_m.
$$ 
Since $\Omega$ and $\bar{\Omega}$ 
are $L$-stable, $\pi_{\bar{\O_\l},m}^{-1}(\bar{\Omega})$ is $L_m$-stable and so 
is $X$. In addition, $\z(\l)$ is $L_m$-stable too. Hence, $C$ is $P_m$-stable because
$$
P_m.C=L_m U_m.(\z(\l) + X +\n_m) =L_m.(\z(\l) + X +\n_m) \subset C.
$$
Observe also that the elements of $X$ are all ad-nilpotent.

Consider the action of $P_m$ on $G_m\times C$ given by 
$\rho.(\sigma,c) = (\sigma \rho^{-1},\rho (c))$. Denote by $\bar{(\sigma,c)}$ 
the $P_m$-orbit of $(\sigma,c)\in G_m \times C$ with respect to this action, 
and denote by $G_m \times_{P_m} C$ the corresponding quotient space. 
The natural morphism
$$
G_m \times C \to \g,\quad (\sigma,c)\mapsto  \sigma(c)
$$
factors through the quotient and we obtain a morphism 
$$
\psi \colon G_m \times_{P_m} C \to \g
$$ 
whose image is $G_m.C$. 
Since $X$ and $\n_m$ are both closed cones, 
$z={1}_{G_m}(z)$ 
lies in the image of $\psi$ and 
$$
\psi^{-1}(z)= \{\overline{(\sigma,c)} \in G_m \times_{P_m} C \; ; \; 
\sigma(c)=z \}.
$$
Let $\bar{(\sigma,c)} \in \psi^{-1}(z)$. Because $z$ is ad-semisimple, $c$ 
is also ad-semisimple. Since all elements of $X$ are ad-nilpotent, 
we deduce that $c$ does not belong to $X+\n_m$. 
Also, since $U_m \subset P_m$, we may assume by Lemma \ref{l:gr} that 
$c$ is of the form $\lambda z+x$ with $\lambda \in\C^*$ and $x\in X$.  
Since $x \in \g_m(0)=(\g_m)^z$, we deduce from the uniqueness of the 
Jordan decomposition that $c=\lambda z$. In particular, $\sigma$ is in 
the normalizer $N_G(\C z)$ of $z$ in $G$, and $c =\sigma^{-1}(z)$.  

According to Remark \ref{rk:gr}, the identity component of the centralizer 
$C_{G_m}(z)$ of $z$ in $G_m$ is contained in $P_m$ and it has finite index 
in $N_{G_m}(\C z)$. Consequently, $\psi^{-1}(z)$ is a finite set. Thus, we get that 
$\dim {G_m.C} = \dim G_m\times_{P_m} C$ 
because they are both irreducible subsets. To conclude, it suffices to observe that 
$\dim G_m - \dim P_m = \dim \n_m$ and $\dim C=1+ \dim X +\dim\n_m $ 
since $\z(\l)=\C z$. 
\end{proof}

Since $\g$ is simple, its Killing form $\langle \,,\rangle$ is non-degenerate. 
Let us denote by $\phi$ the element of $\C[\g]^G$ defined 
by 
$$\forall \, x \in\g,\qquad \phi(x)=\langle x,x\rangle .$$ 
By our choice of $z$, $\phi(z)$ is a nonzero positive integer. 
Set 
$$
\mathscr{C}:= \z(\l) + \bar{\O_\l} +\mathfrak{u}
$$  

\begin{lemma} \label{l:phi}
The nullvariety in $\mathscr{C}$ of $\phi$ is $\bar{\O_\l}+\n$. 
\end{lemma}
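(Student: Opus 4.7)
The plan is to compute $\phi$ explicitly on a general element of $\mathscr{C}$ and show that it depends only on the $\z(\l)$-component. Write a point of $\mathscr{C}$ as
$$
y = \lambda z + x + u, \qquad \lambda \in \C,\; x \in \bar{\O_\l},\; u \in \n.
$$
Expanding,
$$
\phi(y) = \lambda^{2}\langle z,z\rangle + 2\lambda\langle z,x\rangle + 2\lambda\langle z,u\rangle + \langle x,x\rangle + 2\langle x,u\rangle + \langle u,u\rangle.
$$
I would then kill five of these six terms one by one using the grading $\g = \bigoplus_{k}\g_{0}(k)$.

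First, since the Killing form pairs $\g_{0}(k)$ with $\g_{0}(-k)$, the subspaces $\l = \g_{0}(0)$ and $\n = \bigoplus_{k>0}\g_{0}(k)$ are orthogonal, and $\n$ is totally isotropic. This takes out $\langle z,u\rangle$, $\langle x,u\rangle$ and $\langle u,u\rangle$. Next, any $x \in \bar{\O_\l}$ is nilpotent in $\l$, hence ad-nilpotent in $\g$ (since $\ad_{\g}x$ stabilises each $\g_{0}(k)$ and acts nilpotently on each of them: on $\g_{0}(0) = \l$ this is clear, and on $\g_{0}(k)$ for $k\neq 0$ it follows because $\l$-nilpotent implies $\g$-nilpotent for elements of $\l$). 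Therefore $\langle x,x\rangle = \operatorname{tr}_{\g}(\ad x)^{2} = 0$. For the remaining cross term, $\ad z$ acts by the scalar $k$ on $\g_{0}(k)$, so
$$
\langle z,x\rangle = \operatorname{tr}_{\g}(\ad z\,\ad x) = \sum_{k} k\,\operatorname{tr}_{\g_{0}(k)}(\ad x) = 0,
$$
each trace vanishing because $\ad x$ is nilpotent on $\g_{0}(k)$.

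Combining these, $\phi(y) = \lambda^{2}\phi(z)$. Since $\phi(z)$ is a positive integer (it was noted to be nonzero just before the statement), $\phi(y) = 0$ if and only if $\lambda = 0$, i.e.\ if and only if $y \in \bar{\O_\l}+\n$. This proves the lemma.

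The main thing to be careful about is justifying the two nilpotency/orthogonality claims that make all cross terms vanish, namely that elements of $\bar{\O_\l}$ are ad-nilpotent in $\g$ (so that $\langle z,x\rangle$ and $\langle x,x\rangle$ vanish) and that $\n$ is isotropic and orthogonal to $\l$ for the Killing form. Neither is really difficult, but both rely on using the $\ad z$-grading rather than working directly in $\g$, so the bookkeeping by graded components is the only subtle point.
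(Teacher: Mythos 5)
Your proof is correct and follows essentially the same route as the paper's: expand $\phi(\lambda z + x + u)$ and kill the cross terms by grading-orthogonality, reducing to $\lambda^2\phi(z)$. The one small deviation is that you derive $\langle z,x\rangle = 0$ via the trace of $\ad z\,\ad x$ and the nilpotency of $\ad x$ on each graded piece, whereas the paper just invokes the orthogonality $\z(\l)\perp[\l,\l]\oplus\n$; both work, and the paper's version is a bit more direct since it does not need nilpotency at that step.
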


\begin{proof} 
First of all, $\bar{\O_\l}+\n$ is contained in the nullvariety in $\mathscr{C}$ of $\phi$. 
For the other inclusion, let $u=\lambda z + x +y$ 
be in $\mathscr{C}$, with $\lambda\in\C$, $x \in \bar{\O_\l}$ and $y\in \n$ such that 
$\phi(u)=0$. We have 
$$0 = \phi(u) = 
\langle \lambda z + x +y , \lambda z + x +y\rangle = \lambda^2 \langle z,z \rangle + 
\langle x ,  x \rangle= \lambda^2 \langle z,z\rangle
$$ 
since $\n$ is orthogonal to $\p$, $\z(\l)$ is orthogonal to $[\l,\l]\oplus\n$ 
and since $\langle x ,  x \rangle =\phi(x)=0$. 
Hence $\lambda=0$ since $\phi(z)\not=0$. So, $u$ lies in $\bar{\O_\l}+\n$, 
whence the other inclusion. 
%
\end{proof}

Let $\phi^{(0)},\ldots,\phi^{(m)}\in \C[\g_m]$ be the polynomials 
as defined in Remark \ref{rk:aff} relative to $\phi$. According to Lemma \ref{l:inv}, 
they are $G_m$-invariant. In particular, $\phi^{(0)}$ is $G_m$-invariant. 

\begin{lemma} \label{l:null}
Let $\Omega_\l$ be an $L$-orbit contained in $\bar{\O_\l}$ and 
set $\Omega_\g:={\rm Ind}_{\l}^{\g}(\Omega_\l)$. Then: 
\begin{itemize}
\item[1)] the nullvariety in 
$G_m.(\z(\l)+\pi_{\bar{\O_\l},m}^{-1}( \bar{\Omega_\l} ) +\n_m)$ of $\phi^{(0)}$ 
is contained in $\pi_{\bar{\O_\g},m}^{-1}(\bar{\Omega_\g})$,   
\item[2)] $\dim \pi_{\bar{\O_\g},m}^{-1}(\bar{\Omega_\g}) 
\ge \dim \pi_{\bar{\O_\l},m}^{-1}( \bar{\Omega_\l} ) + 2 \dim \mathfrak{u}_m$. 
\end{itemize}
\end{lemma}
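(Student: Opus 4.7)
The strategy is to prove (1) as a direct extension of Lemma~\ref{l:phi} from the Lie algebra to the jet setting, and then to deduce (2) from (1) by combining Lemma~\ref{l:dimZ-2} with Krull's Hauptidealsatz.

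For (1), I would take any point $\xi$ in the nullvariety and write it as $\xi = g.(\lambda z + x + y)$ with $g \in G_m$, $\lambda \in \C$, $x \in \pi_{\bar{\O_\l},m}^{-1}(\bar{\Omega_\l})$ and $y \in \n_m$. The crucial observation is that $\phi^{(0)}$ is $G_m$-invariant by Lemma~\ref{l:inv} and depends only on the zeroth jet by the explicit formula in Remark~\ref{rk:aff}; therefore $\phi^{(0)}(\xi) = \phi(\lambda z + x_0 + y_0)$, where $x_0 \in \bar{\Omega_\l}$ and $y_0 \in \n$ denote the zeroth components of $x$ and $y$. Expanding this by bilinearity and applying exactly the same orthogonality relations used in the proof of Lemma~\ref{l:phi} (the Killing form pairs $\n$ with $\p$ trivially thanks to the grading, $\z(\l)$ with $[\l,\l]$ trivially, and vanishes on nilpotent elements), the expression collapses to $\lambda^2 \phi(z)$. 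Since $\phi(z) \neq 0$, the hypothesis $\phi^{(0)}(\xi) = 0$ forces $\lambda = 0$. Writing $\xi = g.(x+y)$, its zeroth component $g_0.(x_0 + y_0)$ lies in $g_0.(\bar{\Omega_\l} + \n) \subseteq \bar{\Omega_\g}$ by the analogue of Lemma~\ref{l:ind} applied to the pair $(\Omega_\l, \Omega_\g)$ and by $G$-stability, while $\xi$ itself lies in $G_m.(\J_m(\bar{\O_\l}) + \n_m) \subseteq G_m.\J_m(\bar{\O_\l} + \n) \subseteq \J_m(\bar{\O_\g})$ by functoriality of jet schemes together with the inclusion $\bar{\O_\l} + \n \subseteq \bar{\O_\g}$ coming from Lemma~\ref{l:ind}.

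For (2), I would select an irreducible component $X$ of $\pi_{\bar{\O_\l},m}^{-1}(\bar{\Omega_\l})$ of maximal dimension, so that $\dim X = \dim \pi_{\bar{\O_\l},m}^{-1}(\bar{\Omega_\l})$. By Lemma~\ref{l:dimZ-2}, the irreducible variety $V := G_m.(\z(\l) + X + \n_m)$ has dimension $\dim X + 2 \dim \n_m + 1$. The computation of (1) shows $\phi^{(0)}(\lambda z + x) = \lambda^2 \phi(z)$ for $x \in X$ and $\lambda \in \C$, so $\phi^{(0)}$ is not identically zero on $V$. Krull's Hauptidealsatz then forces the nullvariety of $\phi^{(0)}$ in $V$ to have pure codimension one, hence dimension $\dim X + 2 \dim \n_m$, and by (1) this nullvariety is contained in $\pi_{\bar{\O_\g},m}^{-1}(\bar{\Omega_\g})$. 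The inequality of (2) follows immediately.

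The only genuinely delicate step is the orthogonality computation isolating the $\z(\l)$-coordinate: it is what ensures that the vanishing of $\phi^{(0)}$ is equivalent to the vanishing of $\lambda$, which in turn places the nullvariety inside the controllable piece $\J_m(\bar{\O_\l}) + \n_m$. Once this is in hand, everything else is formal manipulation with jet schemes and dimension counting.
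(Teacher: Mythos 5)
Your proof is correct and follows essentially the same route as the paper's: the same $G_m$-invariance of $\phi^{(0)}$ and reduction to the zeroth jet to force $\lambda=0$ in part (1), and the same combination of Lemma~\ref{l:dimZ-2} with the principal ideal theorem on the irreducible set $G_m.(\z(\l)+X+\n_m)$ in part (2). The only cosmetic difference is that you name Krull's Hauptidealsatz explicitly where the paper phrases the codimension-one step in terms of irreducibility of $Y'$; the dimension count and the conclusion are identical.

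One small thing worth making explicit (the paper does): before invoking the codimension-one estimate you should note that the nullvariety is nonempty, which holds because $\z(\l)$, $X$ and $\n_m$ are all closed cones, so $0$ lies in $G_m.(\z(\l)+X+\n_m)$ and $\phi^{(0)}(0)=0$.
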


\begin{proof} 
Let us denote by $Y$ the nullvariety in 
$G_m.(\z(\l)+\pi_{\bar{\O_\l},m}^{-1}( \bar{\Omega_\l} ) +\n_m)$ of $\phi^{(0)}$. 
First of all, observe that $Y$ contains $0$ because $\z(\l)$, 
$\pi_{\bar{\O_\l},m}^{-1}( \bar{\Omega_\l} )$ and $\n_m$ 
are closed cones. In particular, $Y$ is nonempty. 


1) Let $u = g.(\lambda z + x + y)$ be in $Y$, with $g\in G_m$, 
$\lambda\in\C$, $x \in \pi_{\bar{\O_\l},m}^{-1}( \bar{\Omega_\l} )$ and $y\in \n_m$ 
such that $\phi^{(0)}(u)=0$. 
Since $\phi^{(0)}$ is $G_m$-invariant, we get, setting 
$x_0 := \pi_{\bar{\O_\l},m}(x)$ and $y_0 := \pi_{\n,m}(y)$, 
$$0=\phi^{(0)}(u)= \phi^{(0)}(\lambda z + x + y) 
=\phi(\lambda z + x_0 + y_0) = \lambda^2 \phi(z)$$
by the computations of the proof of Lemma \ref{l:phi}. 
Hence $\lambda =0$ since $\phi(z)\not=0$. 
So $u$ lies in $G_m.(\pi_{\bar{\O_\l},m}^{-1}( \bar{\Omega_\l} ) +\n_m)$. 
But 
$$G_m.(\pi_{\bar{\O_\l},m}^{-1}( \bar{\Omega_\l} ) +\n_m) 
\subset G_m.(\J_m(\bar{\O_\l}) +\n_m) \subset G_m.\J_m(\bar{\O_\g}) =\J_m(\bar{\O_\g}) 
$$
because $\J_m(\bar{\O_\g})$ is $G_m$-invariant. 
Thus $Y$ is contained in $\J_m(\bar{\O_\g})$. 
Then it remains to observe that for $u \in Y$, 
$$\pi_{\bar{\O_\g},m}(u) \in  G.(\bar{\Omega_l} +\n)
=\bar{\Omega_\g}
$$
by Lemma \ref{l:ind}. 
In conclusion, $Y$ is contained in $\pi_{\bar{\O_\g},m}^{-1}(\bar{\Omega_\g})$. 

2) Let $X$ be an irreducible component of $\pi_{\bar{\O_\l},m}^{-1}(\bar{\Omega})$ 
of maximal dimension, and let $Y'$ be the nullvariety in 
$G_m.(\z(\l)+X+\n_m)$ of $\phi^{(0)}$.
The function $\phi^{(0)}$ is not identically zero on $G_m.(\z(\l)+X+\n_m)$ since $z \in G_m.(\z(\l)+X+\n_m)$ 
and $\phi^{(0)}(z)=\phi(z)\not=0$. 
Since $Y'$ is irreducible, we deduce by Lemma \ref{l:dimZ-2} and our choice of $X$ that 
$$\dim Y' = \dim G_m.(\z(\l)+X+\n_m) - 1 = \dim X + 2 \dim \n_m 
= \dim \pi_{\bar{\O_\l},m}^{-1}( \bar{\Omega_\l} ) + 2\dim \n_m,
$$
whence the statement by 1). 
\end{proof}

\begin{prop} \label{p:ind}
If for some $L$-orbit $\Omega_\l$ in $\bar{\O_\l}$, we have 
$\dim\pi_{\bar{\O_\l},m}^{-1}(\bar{\Omega_\l}) \ge \dim \pi_{\bar{\O_\l},m}^{-1}
(\O_\l)$, then 
$\dim\pi_{\bar{\O_\g},m}^{-1}(\bar{\Omega_\g}) \ge 
\dim \pi_{\bar{\O_\g},m}^{-1}(\O_\g)$, where $\Omega_\g$ is the induced nilpotent 
orbit of $\g$ from $\Omega_\l$. 
\end{prop}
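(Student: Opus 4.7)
The plan is to deduce this proposition directly from Lemma~\ref{l:null},(2) together with the classical dimension formula for induced nilpotent orbits from Theorem~\ref{t:CM}. The heavy lifting has already been done in establishing Lemma~\ref{l:null}, so what remains is essentially a bookkeeping check that the dimension gained by passing from $\bar{\O_\l}$ to $\bar{\O_\g}$ via the induction construction matches the corresponding gain in jet-space dimension on both sides of the inequality.

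First, I would translate the conclusion we want into concrete numbers. Since $\O_\l$ and $\O_\g$ are smooth (being homogeneous spaces under $L$ and $G$ respectively), Lemma~\ref{l:gpe} together with Proposition~\ref{p:smo} yields
$$
\dim \pi_{\bar{\O_\l},m}^{-1}(\O_\l) = (m+1)\dim \O_\l, \qquad \dim \pi_{\bar{\O_\g},m}^{-1}(\O_\g) = (m+1)\dim \O_\g.
$$
Next, by Theorem~\ref{t:CM}, $\mathrm{codim}_\g(\O_\g) = \mathrm{codim}_\l(\O_\l)$, which rearranges to $\dim \O_\g - \dim \O_\l = \dim \g - \dim \l = 2\dim \n$. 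Multiplying by $(m+1)$ and using $\dim \n_m = (m+1)\dim \n$, I obtain the key identity
$$
(m+1)\dim \O_\g \;=\; (m+1)\dim \O_\l + 2\dim \n_m.
$$

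Now the argument is immediate: Lemma~\ref{l:null},(2) provides the inequality
$$
\dim \pi_{\bar{\O_\g},m}^{-1}(\bar{\Omega_\g}) \;\ge\; \dim \pi_{\bar{\O_\l},m}^{-1}(\bar{\Omega_\l}) + 2\dim \n_m.
$$
Applying the hypothesis $\dim \pi_{\bar{\O_\l},m}^{-1}(\bar{\Omega_\l}) \ge (m+1)\dim \O_\l$ and the identity above gives
$$
\dim \pi_{\bar{\O_\g},m}^{-1}(\bar{\Omega_\g}) \;\ge\; (m+1)\dim \O_\l + 2\dim \n_m \;=\; (m+1)\dim \O_\g \;=\; \dim \pi_{\bar{\O_\g},m}^{-1}(\O_\g),
$$
which is exactly the conclusion of the proposition.

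There is essentially no obstacle at the level of this proposition itself — all the work is hidden inside Lemma~\ref{l:null}, whose proof already exploited the Killing form invariant $\phi^{(0)}$ to cut a hypersurface in $G_m.(\z(\l)+\pi_{\bar{\O_\l},m}^{-1}(\bar{\Omega_\l})+\n_m)$ and the quotient dimension count of Lemma~\ref{l:dimZ-2} to control dimensions. The one small care required is to check that $\bar{\Omega_\g}$ as defined here (the closure of the induced orbit of $\Omega_\l$) is the same object that appears in Lemma~\ref{l:null}; this is built into Remark~\ref{rk:ind},(3), which guarantees $\bar{\Omega_\g} \subset \bar{\O_\g}$ so that $\pi_{\bar{\O_\g},m}^{-1}(\bar{\Omega_\g})$ makes sense as a subscheme of $\J_m(\bar{\O_\g})$.
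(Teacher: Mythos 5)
Your proof is correct and follows essentially the same route as the paper: invoke Lemma~\ref{l:null},(2) for the inequality $\dim \pi_{\bar{\O_\g},m}^{-1}(\bar{\Omega_\g}) \ge \dim \pi_{\bar{\O_\l},m}^{-1}(\bar{\Omega_\l}) + 2\dim\n_m$, plug in the hypothesis, and match dimensions via Theorem~\ref{t:CM}'s codimension equality. The only cosmetic difference is that you package the codimension count as the identity $(m+1)\dim\O_\g = (m+1)\dim\O_\l + 2\dim\n_m$ up front, whereas the paper folds it into the final line, and you add a (harmless, sensible) remark about $\bar{\Omega_\g} \subset \bar{\O_\g}$ via Remark~\ref{rk:ind},(3).
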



\begin{proof} 
Assume that for some $L$-orbit $\Omega_\l$ in $\bar{\O_\l}$, we have 
$\dim\pi_{\bar{\O_\l},m}^{-1}(\bar{\Omega_\l}) \ge \dim \pi_{\bar{\O_\l},m}^{-1}(\O_\l)$. 
Then by Lemma \ref{l:null}, we have 
\begin{eqnarray*}
\dim \pi_{\bar{\O_\g},m}^{-1}(\bar{\Omega_\g}) 
\ge \dim \pi_{\bar{\O_\l},m}^{-1}( \bar{\Omega_\l} ) + 2 \dim \mathfrak{u}_m 
& \ge & \dim \pi_{\bar{\O_\l},m}^{-1}(\O_\l)+ 2 \dim \mathfrak{u}_m \\
& = & (m+1) \dim \O_\l + 2(m+1)\dim \n.
\end{eqnarray*} 
To conclude, it remains to observe that $\pi_{\bar{\O_\g},m}^{-1}(\O_\g)$ 
has dimension $(m+1) \dim \O_\l + 2(m+1) \dim \n$ 
because $\dim \O_\g=2\dim\n+\dim\O_\l$ from Theorem \ref{t:CM}. 
\end{proof}

\begin{rem} \label{rk3:ind}
The above proof actually shows that $\pi_{\bar{\O_\g},m}^{-1}(\bar{\Omega_\g})$ 
has dimension at least $2(m+1) \dim \n + \dim\pi_{\bar{\O_\l},m}^{-1}(\bar{\Omega_\l})$ 
even if $\Omega_\l$ does not verify the hypothesis of the proposition. 
This can be used in practice to give an estimation of 
$\dim\pi_{\bar{\O_\g},m}^{-1}(\bar{\O_\g} \setminus \O_\g)$. 
\end{rem}

We are now in a position to prove the main result of the section. 

\begin{proof}[Proof of Theorem \ref{t:ind}] 
Let $\l$ be a Levi subalgebra of $\g$. 
Then there is a finite sequence of Levi subalgebras 
$$
\l=\l_0\subset \l_1\subset \l_1 \subset \cdots \subset \l_k = \g
$$ 
such that $\l_{i-1}$ is a maximal Levi subalgebra of $\l_i$
for every $i \in \{1,\ldots,k\}$. 

Let $\O_\l$ be a nilpotent orbit of $\l=\l_0$ verifying \RC$_2(m)$ for some $m\in\N$, 
and set for $i \in \{1,\ldots,k\}$, 
$$
\O_{\l_i}={\rm Ind}_{\l_{i-1}}^{\l_i}(\O_{\l_{i-1}}).
$$ 
Since induction is transitive, cf.~Remark \ref{rk:ind},(2), we get 
$$
\O_\g:={\rm Ind}_{\l}^{\g}(\O_\l) ={\rm Ind}_{\l_{k-1}}^{\l_k}( {\rm Ind}_{\l_{k-2}}^{\l_{k-1}} 
( \ldots ( {\rm Ind}_{\l_{0}}^{\l_1}(\O_{\l_{0}})))).
$$  
So, in order to proof Theorem \ref{t:ind}, we may assume that $\l$ is maximal in $\g$. 
Let us write $\O_\l$ as a product 
$\O_\l=\O_{\r_1} \times\cdots\times \O_{\r_n}$, with the $\r_j$'s as in Remark \ref{rk:ind},(1). 
Since $\O_\l$ verifies \RC$_2(m)$, $\O_{\r_j}$ verifies 
\RC$_2(m)$ for some $j \in \{1,\ldots,n\}$. 
Since $\l$ is maximal in $\g$, either $\r_j=\s_j$ and ${\rm Ind}_{\r_j}^{\s_j}(\O_{\r_j})$ 
obviously verifies \RC$_2(m)$ too, 
or $\r_j$ is maximal in $\s_j$ and by Proposition~\ref{p:ind}, 
${\rm Ind}_{\r_j}^{\s_j}(\O_{\r_j})$ verifies \RC$_2(m)$ as well. 
Indeed, since $\O_{\r_j}$ verifies \RC$_2(m)$, for some $\Omega_{\r_j}$ 
in  $\bar{\O_{\r_j}}\setminus \O_{\r_j}$, $\dim\pi_{\bar{\O_{\r_j}},m}^{-1}(\bar{\Omega_{\r_j}}) 
\ge \dim \pi_{\bar{\O_{\r_j}},m}^{-1}(\O_{\r_j})$ and Proposition~\ref{p:ind} applies. 
In both cases, by 
Remark \ref{rk:ind},(3), we conclude that $\O_\g:={\rm Ind}_{\l}^{\g}(\O_\l)$ verifies 
\RC$_2(m)$. 
\end{proof}


\section{Consequence of Theorem \ref{t:ind}} \label{S:csq} 
Theorem \ref{t:ind} allows us to answer the reducibility problem for 
many nilpotent orbits. 

Recall from the beginning of Section \ref{S:nil} that if $\O$ is a nilpotent orbit of a reductive Lie algebra $\g$ with
simple factors $\mathfrak{s}_{1},\dots , \mathfrak{s}_{m}$,
then $\O = \O_{1}\times \cdots \times \O_{m}$ where $\O_{i}$ is a nilpotent orbit
of $\mathfrak{s}_{i}$. We shall say that {\em $\O$ has
a little factor} if  there exists $i$ such that $\O_{i}$ is a little nilpotent orbit of $\mathfrak{s}_{i}$.

The following result is a direct consequence 
of Theorem \ref{t:ind} and Proposition \ref{p:litt}. 

\begin{thm} \label{t2:ind} 
Any nilpotent orbit induced from a nilpotent orbit that has a little factor verifies \RC$_2(m)$ for every $m\in\N^*$. 
\end{thm}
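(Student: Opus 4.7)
The plan is to combine Proposition~\ref{p:litt} (little orbits verify \RC$_2(m)$) with Theorem~\ref{t:ind} (\RC$_2(m)$ passes through induction), bridged by the observation that \RC$_2(m)$ is preserved under direct products of orbits in a reductive Lie algebra.

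First, I would fix $\O_\g = {\rm Ind}_\l^\g(\O_\l)$ for some Levi subalgebra $\l$ of $\g$ and some nilpotent orbit $\O_\l$ of $\l$ with a little factor. Decomposing $[\l,\l]$ into its simple factors $\mathfrak{t}_1,\dots,\mathfrak{t}_p$ as discussed at the beginning of Section~\ref{S:nil}, one writes $\O_\l = \O'_1 \times \cdots \times \O'_p$ with $\O'_i$ a nilpotent orbit of $\mathfrak{t}_i$, and by hypothesis some $\O'_{j_0}$ is little in $\mathfrak{t}_{j_0}$. Proposition~\ref{p:litt} then yields that $\O'_{j_0}$ verifies \RC$_2(m)$ for every $m\in\N^*$, so for each such $m$ there is a nilpotent orbit $\Omega'_{j_0} \subset \bar{\O'_{j_0}}\setminus\O'_{j_0}$ of $\mathfrak{t}_{j_0}$ with $\dim\pi_{\bar{\O'_{j_0}},m}^{-1}(\Omega'_{j_0}) \ge (m+1)\dim \O'_{j_0}$.

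Next, I would transport this witness to a witness for $\O_\l$ itself. Set
$$
\Omega_\l := \O'_1 \times \cdots \times \O'_{j_0-1} \times \Omega'_{j_0} \times \O'_{j_0+1} \times \cdots \times \O'_p,
$$
which is a nilpotent orbit of $\l$ contained in $\bar{\O_\l}\setminus\O_\l$. Using that jet schemes commute with products (Lemma~\ref{l:jet},(2)), $\pi_{\bar{\O_\l},m}^{-1}(\Omega_\l)$ decomposes as the product of $\pi_{\bar{\O'_{j_0}},m}^{-1}(\Omega'_{j_0})$ with the $\pi_{\bar{\O'_i},m}^{-1}(\O'_i)$ for $i\neq j_0$; the latter have dimension $(m+1)\dim \O'_i$ by Proposition~\ref{p:smo} and Lemma~\ref{l:gpe}. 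Summing, one obtains
$$
\dim \pi_{\bar{\O_\l},m}^{-1}(\Omega_\l) \;\ge\; (m+1)\sum_{i=1}^p \dim \O'_i \;=\; (m+1)\dim \O_\l,
$$
so that $\O_\l$ verifies \RC$_2(m)$. Theorem~\ref{t:ind} then delivers the conclusion that $\O_\g$ verifies \RC$_2(m)$ for every $m\in\N^*$.

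There is no substantial obstacle here: the statement is assembled from the two cited results, and the only mildly nontrivial step is the product compatibility of \RC$_2(m)$, which is a one-line consequence of the product formula for jet schemes.
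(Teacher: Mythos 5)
Your proof is correct and matches the paper's intent: the paper states Theorem~\ref{t2:ind} as ``a direct consequence of Theorem~\ref{t:ind} and Proposition~\ref{p:litt}'' without further comment, and your argument precisely unpacks that. The one step left tacit in the paper---that RC$_2(m)$ for a single simple factor implies RC$_2(m)$ for the product orbit $\O_\l$---is exactly what you supply via the product decomposition of jet schemes, Lemma~\ref{l:jet},(2), and the dimension count $\dim\pi_{\bar{\O'_i},m}^{-1}(\O'_i)=(m+1)\dim\O'_i$ from Lemma~\ref{l:gpe} and Proposition~\ref{p:smo}, so nothing is missing.
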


When $\g$ is simple, there is a unique nilpotent orbit $\O_{\rm subreg}$ of $\g$, 
called the subregular nilpotent orbit, such that $\mathcal{N}(\g) \setminus \O_\reg=\bar{\O_{\rm subreg}}$.  
It has codimension ${\rm rk}\,\g+2$ in $\g$. 

\begin{cor} \label{c:sub}
Assume that $\g$ simple and not of type $\bf{A_1}$, 
$\bf{B_2}={\bf C_2}$ or $\bf{G_2}$. 
Then the subregular nilpotent orbit $\O_{\rm subreg}$ of $\g$ 
verifies \RC$_2(m)$ for every $m\in\N^*$. In particular, $\J_m(\bar{\O_{\rm subreg}})$ 
is reducible for every $m\in\N^*$.  
\end{cor}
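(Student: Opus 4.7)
The plan is to reduce the statement to Theorem~\ref{t2:ind} by exhibiting, for each simple $\g$ of type different from $\bf{A_1}$, $\bf{B_2}$, $\bf{G_2}$, a proper Levi subalgebra $\l \subsetneq \g$ together with a nilpotent orbit $\O_\l$ of $\l$ having a little factor such that $\mathrm{Ind}_\l^\g(\O_\l) = \O_{\mathrm{subreg}}$. Once this is established, Theorem~\ref{t2:ind} immediately yields that $\O_{\mathrm{subreg}}$ verifies $\mathrm{RC}_2(m)$ for every $m\in\N^*$, and then Lemma~\ref{l:RC},(2) gives the reducibility of $\J_m(\overline{\O_{\mathrm{subreg}}})$. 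The exclusion of types $\bf{A_1}$, $\bf{B_2}$ and $\bf{G_2}$ is forced by this method: in those cases every proper Levi subalgebra of $\g$ is toral or has semisimple part of type $\bf{A_1}$, and the unique nonzero nilpotent orbit of a simple Lie algebra of type $\bf{A_1}$ is regular, hence not little.

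In type $\bf{A}_n$ with $n \ge 2$, the construction is fully explicit. For $n = 2$, the subregular orbit coincides with the minimal orbit and is itself little by Corollary~\ref{c:min}, so Proposition~\ref{p:litt} applies directly without any induction. For $n \ge 3$, I would take $\l \subset \sl_{n+1}$ to be the standard Levi whose semisimple part is of type $\bf{A_2}$, and $\O_\l$ the minimal orbit of the $\sl_3$-factor (which is little by Corollary~\ref{c:min}), combined with the zero orbit on the toral factors. A direct computation using the transpose-and-union rule for induction of nilpotent orbits in type $\bf{A}$ shows that the sorted union of transposed partitions is $(2,1^{n-1})$, whose transpose is $(n,1)$, so that $\mathrm{Ind}_\l^\g(\O_\l) = \O_{\mathrm{subreg}}$.

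For the other classical types $\bf{B}_n$ ($n\ge 3$), $\bf{C}_n$ ($n\ge 3$), and $\bf{D}_n$ ($n\ge 4$), I would proceed analogously, using the partition description of nilpotent orbits recalled in Appendix~\ref{app:not} and the Collingwood--McGovern induction rules for classical simple Lie algebras: choose a Levi $\l$ whose semisimple part contains a classical simple factor of rank at least two, take $\O_\l$ to be the minimal orbit of that factor together with the zero or regular orbits on the remaining factors, and verify by a combinatorial computation that the resulting induced partition is exactly the one of the subregular orbit. For the exceptional types $\bf{E_6}$, $\bf{E_7}$, $\bf{E_8}$, and $\bf{F_4}$, the same strategy applies and the required induction data can be extracted from the Bala--Carter tables collected in Appendix~\ref{app:exc}.

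The main obstacle lies in this case-by-case combinatorial verification, particularly for the classical types of smallest admissible rank and for the exceptional types, where the most natural ``one-step'' Bala--Carter presentation of $\O_{\mathrm{subreg}}$ need not itself have a little factor. Transitivity of induction (Remark~\ref{rk:ind},(2)) is the key flexibility here: whenever a direct Levi fails to supply a little factor, one replaces it by a smaller Levi inside it in which a little orbit becomes available, and checks that inducing the little orbit in stages still produces $\O_{\mathrm{subreg}}$.
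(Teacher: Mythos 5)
Your overall strategy — reduce to Theorem~\ref{t2:ind} by exhibiting a proper Levi containing a little factor that induces up to $\O_{\rm subreg}$ — is exactly the paper's, and your type-$\bf{A}$ computation is correct and coincides with what the paper does (the $\bf{A_2}$ case is handled by Corollary~\ref{c:min}, and for $n\ge 3$ one induces the little orbit $\O_{(2,1)}$ from an $\bf{A_2}$-Levi). However, outside type $\bf{A}$ your proof is only a plan, and the specific recipe you propose has a gap.

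The recipe ``take a Levi $\l$ whose semisimple part contains a classical simple factor of rank at least two, put the minimal orbit on that factor and zero/regular orbits elsewhere'' does not in general land on $\O_{\rm subreg}$. Since induction preserves codimension (Theorem~\ref{t:CM}), and $\O_{\rm subreg}$ is the unique nilpotent orbit of codimension ${\rm rk}\,\g+2$, the orbit you induce from must also have codimension ${\rm rk}\,\l+2$ in $\l$. A short bookkeeping with $\dim\z(\l)+\sum_i {\rm rk}(\s_i)={\rm rk}\,\g$ shows that if the remaining factors carry the regular orbit then the factor carrying the minimal orbit must itself have its minimal orbit equal to its subregular orbit, which forces that factor to be of type $\bf{A_2}$; putting the zero orbit on any other factor only makes things worse. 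For instance in $\so_7$ a Levi with semisimple part $\so_5$ and the minimal orbit $\O_{(2,2,1)}$ gives codimension $7$ in $\l$, not $5$, so the induced orbit is not $\O_{\rm subreg}$. Your ``case-by-case combinatorial verification'' would therefore discover, in every type, that you must in fact fall back on an $\bf{A_2}$-Levi — which you have not established exists, and which is precisely where the paper's argument is cleaner.

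What the paper does that you miss: for any simple $\g$ of rank $\ge 3$ there is always a Levi $\l$ with $[\l,\l]$ simple of type $\bf{A_2}$ (pick two adjacent simple roots joined by a single bond). Inducing the subregular $=$ minimal $=$ little orbit $\O_{(2,1)}$ of $[\l,\l]$ gives a nilpotent orbit of $\g$ of codimension $\mathop{\rm codim}_\l\O_{(2,1)} = \dim\z(\l) + 4 = ({\rm rk}\,\g - 2) + 4 = {\rm rk}\,\g + 2$; this uniquely identifies it as $\O_{\rm subreg}$, with no partition arithmetic, no $(\cdot)^{\pm}$-collapses in types $\bf{B},\bf{C},\bf{D}$, and no Bala--Carter tables for exceptional types. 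You should replace your case-by-case plan with this uniform dimension argument; it closes the gap and makes the exclusion of $\bf{A_1}$, $\bf{B_2}$, $\bf{G_2}$ transparent, since those are exactly the simple algebras of rank $\le 2$ other than $\bf{A_2}$.
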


\begin{proof} 
Assume first that $\g$ has type $\bf{A_2}$. Then $\g=\sl_3(\C)$ and
$\O_{\rm subreg}=\O_{\min}=\O_{(2,1)}$. 
Hence, $\O_{\rm subreg}$ is little and verifies \RC$_2(m)$ for every $m\in\N^*$ 
according to Corollary \ref{c:min}. 

Assume now that $\g$ is simple with rank $\ge 3$. 
Then there exists a Levi subalgebra $\l$ of $\g$ such that $[\l,\l]$ is simple of type $\bf{A_2}$, 
and the subregular nilpotent orbit of $\g$ is induced from that of $[\l,\l]$ for dimension 
reasons (cf.~Theorem \ref{t:CM}).  Therefore, the 
theorem follows from the case $\sl_3(\C)$ and Theorem \ref{t2:ind}. 
\end{proof}

\begin{rem} \label{rk:sub}
Outside types $\bf{A}$ and $\bf{B}$, the subregular nilpotent orbit of a simple Lie algebra is distinguished. 
Thus Corollary~\ref{c:sub} provides examples of distinguished nilpotent orbits 
which verify \RC$_2(m)$ for every $m\in\N^*$. In particular, according to 
Remark~\ref{rk:dis}, these nilpotent orbits verify \RC$_2(1)$ but 
not \RC$_1$. 
\end{rem}

\begin{rem} \label{rk2:sub}
For $\g=\sp_4(\C)\simeq \so_5(\C)$, 
we can show that $\J_1(\bar{\O_{\rm subreg}})$ is irreducible. 

Let us detail this example where the computations are explicit. 
Let $\g=\sp_4(\C)$.
The subregular nilpotent orbit is $\O_{(2^2)}$. By~Appendix~\ref{app:not}, it has dimension 6, and
its singular locus is the union of two nilpotent orbits, $\O_{(2,1^2)}=\O_{\min}$ and the zero orbit. 

Using \cite[Thm.\,1]{We2} (see also \cite{We} or \cite[Prop.\,8.2.15]{We3}) 
and the realization of $\sp_4(\C)$ as the set of anti-self-adjoint matrices for the symplectic 
form, we can show that 
the defining ideal of $\bar{\O_{(2^2)}}$ is generated by 
the entries
of the matrix $X^{2}$ as functions of $X\in\sp_{4}(\C)$\footnote{Here, we have used the 
computer program \texttt{Macaulay2} to check that these equations indeed generate a reduced ideal.}. 
It follows that $\J_{1}(\bar{\O_{(2^2)}})$ 
can be identified with the scheme of pairs $(X_{0},X_{1}) \in \sp_{4}(\C ) \times \sp_{4}(\C )$ defined 
by the equations $X_{0}^{2}=0$ and $X_{0}X_{1} + X_{1}X_{0}=0$.

Using this identification, we obtain from direct computations that 
$$\dim \pi_{\bar{\O_{(2^2)}},1}^{-1}(\O_{(2,1^2)}) =11 \qquad\text{ and }\qquad  
\dim\pi_{\bar{\O_{(2^2)}},1}^{-1}(0)=10.$$
Furthermore, there is no smooth points of $\J_1(\bar{\O_{(2^2)}})$ in 
$\pi_{\bar{\O_{(2^2)}},1}^{-1}(\O_{(2,1^2)})\, \cup \,  \pi_{\bar{\O_{(2^2)}},1}^{-1}(0)$. 
To see this, we have computed 
the dimension of the 
tangent space to $\J_1(\bar{\O_{(2^2)}})$ at generic points in 
$\pi_{\bar{\O_{(2^2)}},1}^{-1}(\O_{(2,1^2)})$ and $\pi_{\bar{\O_{(2^2)}},1}^{-1}(0)$. 
For the points in $\pi_{\bar{\O_{(2^2)}},1}^{-1}(\O_{(2,1^2)})$, the smallest dimension 
for the tangent space is 13; for the points in $\pi_{\bar{\O_{(2^2)}},1}^{-1}(0)$, 
the dimension is 14. 

Now, if $\J_1(\bar{\O_{(2^2)}})$ were reducible, it would have an irreducible component 
of dimension 10 or 11 by the above equalities. This is not possible according to 
the computations of the tangent space dimensions. 
Hence, $\J_1(\bar{\O_{(2^2)}})$ is irreducible. 
\end{rem}


\subsection*{Classical types}
We now summarize our conclusions for the case where $\mathfrak{g}$ is simple of classical type. 
We refer to Appendix~\ref{app:not} for the notations relative to the induction of nilpotent 
orbits in the classical cases.  

\begin{thm}[{Type $\bf{A}$}] \label{t:A}
Let $n\in \N^*$, $n \ge 2$, and let $\bs{\lambda}\in\P(n)$. 
Suppose that $\bs{\lambda}$ is non rectangular, then the nilpotent orbit $\O_{\bs{\lambda}}$ 
of $\mathfrak{sl}_n(\C)$ verifies \RC$_2(m)$ for every $m\in\N^*$. In particular, 
$\J_m(\bar{\O_{\bs{\lambda}}})$ is reducible for every $m\in\N^*$. 
\end{thm}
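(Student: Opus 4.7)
The plan is to reduce to Theorem~\ref{t2:ind} via an explicit combinatorial decomposition of $\bs{\lambda}$. Write $\bs{\lambda}$ in multiplicity form as $\bs{\lambda} = (\lambda_1^{a_1}, \lambda_2^{a_2}, \ldots, \lambda_r^{a_r})$ with $\lambda_1 > \lambda_2 > \cdots > \lambda_r \geq 1$ and each $a_i \geq 1$; the non-rectangular hypothesis is exactly $r \geq 2$. I would argue by a dichotomy.

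If $\bs{\lambda} = (2^p, 1^q)$ with $p, q \in \N^*$ (equivalently $r = 2$, $\lambda_1 = 2$, $\lambda_2 = 1$), then by Example~\ref{ex:litt}(ii), $\O_{\bs{\lambda}}$ is a little nilpotent orbit of $\sl_n(\C)$, and Proposition~\ref{p:litt} delivers \RC$_2(m)$ for every $m\in\N^*$. Otherwise, set $p = a_1$, $q = a_2$, $m = 2p+q$, and define
\[
\nu_j = \lambda_j - 2 \ (j \leq p), \quad \nu_j = \lambda_j - 1 \ (p < j \leq p+q), \quad \nu_j = \lambda_j \ (j > p+q).
\]
I would then check that $\bs{\nu}$ is a partition of $n - m$: non-negativity comes from $\lambda_1 \geq \lambda_2 + 1 \geq 2$, while the non-increasing property at the two joints $j = p$ and $j = p+q$ comes respectively from $\lambda_1 > \lambda_2$ and from either $r = 2$ (so $\lambda_{p+q+1} = 0 < \lambda_2 - 1$ when $\lambda_2 \geq 2$, or one takes $\nu_{p+q} = 0$ when $\lambda_2 = 1$) or $\lambda_2 > \lambda_3$. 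Furthermore $\bs{\nu}$ is nonempty: the identity $n - m = a_1(\lambda_1 - 2) + a_2(\lambda_2 - 1) + \sum_{k \geq 3} a_k \lambda_k$ has all summands non-negative, and vanishes only when $\lambda_1 = 2$, $\lambda_2 = 1$ and $r \leq 2$, which is the case excluded above.

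Consequently $\l := \mathfrak{s}(\gl_m \oplus \gl_{n-m})$ is a proper Levi subalgebra of $\sl_n(\C)$, and on its simple factor $\sl_m(\C)$ the orbit $\O_{(2^p, 1^q)}$ is little by Example~\ref{ex:litt}(ii). The type-$\mathbf{A}$ induction rule for nilpotent orbits (cf.~Appendix~\ref{app:not}), according to which the induced partition is obtained by pointwise addition of the input partitions, gives
\[
\mathrm{Ind}_\l^{\sl_n(\C)}\bigl(\O_{(2^p, 1^q)} \times \O_{\bs{\nu}}\bigr) = \O_{\bs{\lambda}},
\]
since $(2^p, 1^q)_j + \nu_j = \lambda_j$ for every $j$. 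Applying Theorem~\ref{t2:ind}, $\O_{\bs{\lambda}}$ verifies \RC$_2(m)$ for every $m \in \N^*$, and the reducibility of $\J_m(\bar{\O_{\bs{\lambda}}})$ follows from Lemma~\ref{l:RC}(2).

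The main difficulty is finding the right decomposition $\bs{\lambda} = \bs{\mu}^{(1)} + \bs{\mu}^{(2)}$ with $\bs{\mu}^{(1)}$ producing a little factor and $\bs{\mu}^{(2)}$ a genuine partition of a positive integer. Choosing $\bs{\mu}^{(1)} = (2^{a_1}, 1^{a_2})$ driven by the two largest distinct parts of $\bs{\lambda}$ is what simultaneously forces the two monotonicity conditions on $\bs{\nu}$ from the very definition of the multiplicity form, and the boundary analysis $|\bs{\nu}|=0$ isolates exactly the little partitions, which are precisely the partitions handled by the first case.
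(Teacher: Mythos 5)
Your proposal is correct and uses the same key lemma (Theorem~\ref{t2:ind}) and the same overall strategy as the paper: exhibit $\bs{\lambda}$ as induced from a nilpotent orbit with a little factor of the form $(2^p,1^q)$. The explicit decomposition differs, though. Writing $\bs{\lambda}=(\lambda_1,\dots,\lambda_r)$ with repetitions and $1<r<n$, the paper picks any $p$ with $\lambda_p>\lambda_{p+1}$ and subtracts $(2^p,1^{r-p})$ (so \emph{every} part of index greater than $p$ loses one), obtaining the residual partition $(\lambda_1-2,\dots,\lambda_p-2,\lambda_{p+1}-1,\dots,\lambda_r-1)$ of $n-p-r$ and the little factor $(2^p,1^{r-p})$ in the Levi of block sizes $(n-p-r,p+r)$. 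You instead take the multiplicities $a_1,a_2$ of the two largest \emph{distinct} part values and subtract only $(2^{a_1},1^{a_2})$, leaving all smaller parts untouched; this forces you to treat $\bs{\lambda}=(2^p,1^q)$ as a separate base case handled by Proposition~\ref{p:litt} alone, whereas the paper's formula silently degenerates to the trivial induction (empty first block) in that situation. Your version has the small advantage of making the boundary case fully explicit and of a somewhat more canonical choice of decomposition; the paper's has the advantage of a one-line uniform formula and of needing to verify monotonicity of the residual at only one joint rather than two. Both verifications of the residual partition's well-definedness are correct, and both conclude by Example~\ref{ex:litt}, Theorem~\ref{t2:ind}, and Lemma~\ref{l:RC}. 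One cosmetic remark: your local variable $m=2p+q$ clashes with the global jet-order parameter $m$ and should be renamed.
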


\begin{proof} 
Suppose that $\bs{\lambda}  = (\lambda_{1},\dots ,\lambda_{r}) 
\in \P(n)$ is non rectangular, with $1<r<n$. Then there exists $1\leqslant p < r$ 
such that $\lambda_{p} > \lambda_{p+1}$. It follows that
$$\bs{\lambda}  ={\rm  Ind}_{(n-p-r,p+r)}^{n} \bs{\Lambda}$$
where
$$\bs{\Lambda} = \left( ( \lambda_{1}-2, \dots ,\lambda_{p}-2 , 
\lambda_{p+1}-1, \dots ,\lambda_{r}- 1) , (2^{p} , 1^{r-p}) \right).$$
Thus any non rectangular partition of $n$ can be induced from a partition of the
form $(2^{p}, 1^{q})$ with $p,q \in \N^{*}$.
According to Example \ref{ex:litt}, $\O_{(2^p,1^q)}$ is little for $p,q \in \N^{*}$. 
Hence the theorem follows from Theorem \ref{t2:ind}. 
\end{proof}

\begin{rem} \label{rk:rect}
It is not difficult to see that rectangular partitions can only be induced from a rectangular one. So they
cannot be induced from a nilpotent orbit that has a little factor (cf.~Example~\ref{ex:litt}).

In fact, for the rectangular case, the theorem is not true. First of all, it is obvious not true for $\bs{\lambda}=(n)$ and $\bs{\lambda}=(1^n)$. 
Let us look at some special cases. 

\begin{itemize}
\item[1)] Let $\bs{\lambda}=(2^p)$ with $2p=n$. Then we saw in Example  \ref{ex:2p} that
$\O_{\bs{\lambda}}$ is \RC$_1$, and that all the irreducible 
components of $\J_1(\bar{\O_{\bs{\lambda}}})$ different from $\pi_{\bar{\O_{\bs{\lambda}}},1}^{-1}(\O_{\bs{\lambda}})$ 
has codimension one. In particular, it is not \RC$_{2}(1)$. 
\item[2)] Let $\bs{\lambda}=(3^2)$. By \cite{We2} (see also \cite{We} or \cite[Prop.\,8.2.15]{We3}), the  defining ideal of $\bar{\O_{\bs{\lambda}}}$
is generated by $\mathrm{tr}(X^{2})$ 
and the entries of the matrix $X^{3}$ as functions of $X\in \sl_{6}(\mathbb{C})$.
By Appendix~\ref{app:not}, the singular locus of $\bar{\O_{\bs{\lambda}}}$ is the 
finite union of the nilpotent orbits $\O_{\bs{\mu}}$ 
with 
$$\mu\in \{(3,2,1),(3,1^3),(2^3),(2^2,1^2),(2,1^4),(1^6)\} \subset \P(6),$$  
and the respective dimensions of $\pi_{\bar{\O_{\bs{\lambda}}},1}^{-1}(\O_{\bs{\mu}})$ 
are 47, 44, 44, 47, 44, 35. Note that $\J_1(\bar{\O_{\bs{\lambda}}})$ has dimension 48. 
Next, we obtain that the respective dimensions of the tangent 
space to $\J_1(\bar{\O_{\bs{\lambda}}})$ at generic points in 
$\pi_{\bar{\O_{\bs{\lambda}}},1}^{-1}(\O_{\bs{\mu}})$, with $\bs{\mu}$ running through the above 
set, are 49, 51, 51, 48, 52, 69. 
Arguing as in Remark \ref{rk2:sub}, 
we conclude that $\J_1(\bar{\O})$ is irreducible. 
\end{itemize} 
\end{rem}

Thereby, from Remark \ref{rk:rect},(1) and (2), we have complete answers for the 
reducibility of $\J_1(\bar{\O})$ for any nilpotent orbit $\O$ in $\sl_n(\C)$, 
for $n\le 7$, and for any nilpotent orbit $\O$ in $\sl_p(\C)$, with $p$ a prime number. 

In the other classical simple Lie algebras, we have the following result. 

\begin{thm}[Types $\bf{B}$, $\bf{C}$, $\bf{D}$]  \label{t:BCD}
Let $\bs{\lambda}=(\lambda_1,\ldots,\lambda_t) \in \P_\varepsilon(n)$ 
with $\varepsilon\in\{1,-1\}$, and set $\lambda_{t+1}=0$. 
\begin{itemize}
\item[1)] Suppose that $\varepsilon=1$ and there exist $1\le k < \ell \le t$ such that 
$\lambda_{k} \ge \lambda_{k+1} +2$ and $\lambda_{\ell} \ge \lambda_{\ell+1} 
+2$, then the nilpotent orbit $\O_{\bs{\lambda}}$ of $\so_{n}(\C)$ verifies \RC$_2(m)$ 
for every $m\in\N^*$. 
\item[2)] Suppose that $\varepsilon=-1$ and there exist $1\le k < \ell \le t$ such that 
$\lambda_{k} \ge \lambda_{k+1} +2$ and $\lambda_{\ell} \ge \lambda_{\ell+1} 
+ 2$, then the nilpotent orbit $\O_{\bs{\lambda}}$ of $\sp_{n}(\C)$ verifies \RC$_2(m)$ 
for every $m\in\N^*$. 
\item[3)] Suppose that $\varepsilon=1$ and that $\bs{\lambda}$ is very even. Then 
both $\O_{\bs{\lambda}}^{I}$ and $\O_{\bs{\lambda}}^{I\!I}$ verfiy \RC$_2(m)$ for every 
$m\in\N^*$. 
\end{itemize}
In particular, $\J_m(\overline{\O_{\bs{\lambda}}})$ is reducible for every $m\in\N^{*}$.
\end{thm}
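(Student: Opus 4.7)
The plan is to realize each orbit $\O_{\bs{\lambda}}$ appearing in the statement as a Lusztig-Spaltenstein induction of a nilpotent orbit with a little factor, and then invoke Theorem~\ref{t2:ind}. The two-jump hypothesis in parts (1) and (2) is exactly what will allow the little factor to be of the form $\O_{(2^p,1^q)}$ with $p,q\ge 1$ inside a $\gl$-block of a Levi, which is little by Example~\ref{ex:litt}(ii).

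First, I would recall the explicit combinatorial form of induction from a Levi subalgebra $\l=\gl_{p+q}(\C)\times\mathfrak{k}\subset\g$, where $\mathfrak{k}$ is of the same classical type as $\g$ but of rank reduced by $p+q$. Given $\bs{\mu}\in\P_{\varepsilon}(n-2p-q)$ and the partition $(2^p,1^q)$ in the $\gl$-factor, the Kempken-Spaltenstein column-collapse formula describes the induced partition in $\P_{\varepsilon}(n)$ as the one obtained from $\bs{\mu}$ by adding $2$ to each of its first $p$ parts and $1$ to each of its next $q$ parts, followed (if needed) by a $\mathbf{B}/\mathbf{C}/\mathbf{D}$-collapse to remain in $\P_{\varepsilon}$. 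Conversely, given $\bs{\lambda}=(\lambda_1,\dots,\lambda_t)\in\P_{\varepsilon}(n)$ with two jumps at positions $k<\ell$, I would set $p=k$, $q=\ell-k$, and define $\bs{\mu}$ by subtracting $2$ from the first $p$ parts of $\bs{\lambda}$ and $1$ from parts $p+1$ through $\ell$. The two-jump condition $\lambda_k\ge\lambda_{k+1}+2$ and $\lambda_\ell\ge\lambda_{\ell+1}+2$ guarantees that $\bs{\mu}$ is a decreasing sequence, and a short case analysis on $\varepsilon\in\{1,-1\}$ verifies that $\bs{\mu}\in\P_{\varepsilon}(n-2p-q)$. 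The induction formula then gives $\O_{\bs{\lambda}}=\mathrm{Ind}_\l^\g(\O_{(2^p,1^q)}\times\O_{\bs{\mu}})$.

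Since $p,q\ge 1$, the orbit $\O_{(2^p,1^q)}$ in $\sl_{p+q}(\C)$ is little, so the orbit $\O_{(2^p,1^q)}\times\O_{\bs{\mu}}$ of $\l$ has a little factor. Theorem~\ref{t2:ind} then yields $\RC_2(m)$ for $\O_{\bs{\lambda}}$ for every $m\in\N^{*}$, and the reducibility of $\J_m(\bar{\O_{\bs{\lambda}}})$ follows from Lemma~\ref{l:RC}(2). For part~(3), a non-rectangular very even partition automatically satisfies the two-jump hypothesis of part~(1): since every part is even, every descent between consecutive distinct values is of size at least $2$, and there are at least two such descents as soon as the partition is non-rectangular. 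The construction above thus applies and delivers $\RC_2(m)$ for both $\O_{\bs{\lambda}}^{I}$ and $\O_{\bs{\lambda}}^{I\!I}$, since these two orbits share the same dimension-theoretic fiber data (they are exchanged by an outer automorphism of $\so_n(\C)$ that preserves the Levi and the induction picture).

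The main obstacle will be the combinatorial bookkeeping in the second step: one must check that the proposed subtraction yields a genuine element of $\P_{\varepsilon}(n-2p-q)$ under the parity constraints on the multiplicities of odd (in type $\mathbf{C}$) or even (in types $\mathbf{B}$, $\mathbf{D}$) parts, and that any collapse applied to $\bs{\mu}$ is inverted by the collapse in the induction so as to recover $\bs{\lambda}$ on the nose. A secondary difficulty is the rectangular very even case $\bs{\lambda}=(k^t)$ with $k,t$ both even, which falls outside the two-jump framework; this case will require an independent argument, for instance by choosing a different Levi whose $\gl$-factor carries a little $(2^{p'},1^{q'})$-orbit with a compatible column-collapse, or by appealing to the restriction technique of Proposition~\ref{p:res}.
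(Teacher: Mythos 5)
Your overall strategy --- realize $\O_{\bs{\lambda}}$ as induced from a Levi whose $\gl$-factor carries a little orbit of type $(2^p,1^q)$, then invoke Theorem~\ref{t2:ind} --- is the paper's strategy. But the combinatorial recipe you propose is off by a crucial factor of two, and this makes the construction fail.

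In the induction formula for types $\bf{B}$, $\bf{C}$, $\bf{D}$ (Appendix~\ref{app:not}), a Levi $\gl_{a}\times\mathfrak{k}$ contributes its $\gl$-partition \emph{twice}: $\nu_i = 2\lambda^{(1)}_i + \mu_i$ before the $\varepsilon$-collapse. So if the $\gl$-factor carries $(2^p,1^q)$ (and hence must be $\gl_{2p+q}$, not $\gl_{p+q}$), the induced partition has $4$ added to the first $p$ parts of $\bs\mu$ and $2$ added to the next $q$ parts. Your proposal instead adds $2$ and $1$. Two things go wrong. First, the sizes are inconsistent: with $p=k$, $q=\ell-k$ and your subtraction of $2$ and $1$ from $\bs\lambda$, $|\bs\mu| = n-k-\ell$, but the classical factor in the Levi $\gl_{2p+q}\times\mathfrak{k}$ has rank $n-2(2p+q) = n-2(k+\ell)$; these do not match. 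Second, and more seriously, subtracting $1$ from $\lambda_i$ flips its parity, so the multiplicity constraints defining $\P_\varepsilon$ are destroyed --- your $\bs\mu$ need not lie in $\P_\varepsilon$ at all, contrary to what you assert the ``short case analysis'' would show. The correct recipe, as in the paper, uses the Levi $\gl_{k+\ell}\times\mathfrak{k}_{n-2(k+\ell)}$, puts $(2^k,1^{\ell-k})$ on the $\gl$-block, and takes $\bs\mu = (\lambda_1-4,\ldots,\lambda_k-4,\lambda_{k+1}-2,\ldots,\lambda_\ell-2,\lambda_{\ell+1},\ldots,\lambda_t)$; the two-jump hypothesis ensures $\bs\mu$ is weakly decreasing, and subtracting $4$ or $2$ preserves parity, so $\bs\mu\in\P_\varepsilon(n-2(k+\ell))$ with no collapse needed and the induced partition recovers $\bs\lambda$ exactly.

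For part~(3) you correctly observe that a non-rectangular very even partition falls under the two-jump hypothesis, but you leave the rectangular very even case as an acknowledged open sub-case. The paper closes this uniformly: for any very even $\bs\lambda\in\P_1(n)$ with $t$ parts, the number $t$ is even (each even value occurs with even multiplicity), $(2^t)$ is a very even partition in $\P_1(2t)$, and $\O_{(2^t)}$ is little in $\so_{2t}(\C)$ by Example~\ref{ex2:litt}(i); one then induces $\O_{\bs\lambda}$ directly from $\O_{(2^t)}$, covering both orbits $\O^{I}_{\bs\lambda}$, $\O^{I\!I}_{\bs\lambda}$ by Remark~\ref{rk:ind}/\cite[Thm.\,7.3.3(iii)]{CMa}. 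You would need to supply an argument of this kind rather than gesture at ``an independent argument''.
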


\begin{proof} 
Let $\bs{\lambda}  = (\lambda_{1},\dots, \lambda_{t}) \in \P_{\varepsilon}(n)$, 
set $\lambda_{t+1}=0$, and suppose that there exist $1\le k  < \ell \le t$ 
such that $\lambda_{k} \ge \lambda_{k+1} +2$ and $\lambda_{\ell} \ge 
\lambda_{\ell+1}+2$ as in the theorem. Then 
$$
\bs{\lambda}  = {\rm Ind}^{n,\varepsilon}_{(\ell+k ; n-2(\ell+k))} \bs{\Gamma}
$$
where 
$$
\bs{\Gamma} :=  \left( (2^{k},1^{\ell-k}) ; 
(\lambda_{1}-4 , \dots ,\lambda_{k}-4, \lambda_{k+1}-2 ,\dots ,\lambda_{\ell}-2,
\lambda_{\ell+1}, \dots ,\lambda_{t}) \right).
$$
So $\bs{\lambda}$ is induced from a partition in $\P(n)$ 
of the form $(2^{p},1^{q})$, with $p,q\in\N^*$. By Example \ref{ex:litt}, 
the partition $(2^{p},1^{q})$ is little. This concludes the proof of parts (1) and (2)
according to Theorem \ref{t2:ind}. 

Finally, if $\bs{\lambda} \in \P_1(n)$ is very even, then $\O_{\bs{\lambda}}$ is induced 
from the nilpotent orbit $\O_{(2^t)}$ of $\so_{2t}(\C)$ which is little by Example \ref{ex2:litt}. 
Again, we conclude thanks to Theorem \ref{t2:ind}. 
\end{proof}

\begin{rem} \label{rk:BCD}
Unlike the type {\bf A} case, in types $\bf{B}$, $\bf{C}$, $\bf{D}$, orbits other than the ones considered 
in Theorem~\ref{t:BCD} can be induced from little ones. For example, for 
$\lambda, p, q \in \mathbb{N}^{*}$ with $p$ even, we have
$\bs{\lambda} = \bigl( (2\lambda)^{p} , (2\lambda-1)^{q} \bigr)  \in \P_1 \bigl( 2\lambda (p+q)-q \bigr)$ and
$\bs{\lambda}$ does not verify the conditions of Theorem \ref{t:BCD}. However, we have
$$
\bs{\lambda} = \bigl( (2\lambda)^{p} , (2\lambda-1)^{q} \bigr) 
= {\rm Ind}^{2\lambda (p+q)-q,1}_{( (\lambda-1)(p+q) ; 2p+q )} \bigl(  (\lambda-1)^{p+q} ; (2^{p},1^{q})\bigr) 
$$
Since the nilpotent orbit of $\so_{2p+q}(\C )$ corresponding to the partition $(2^{p},1^{q})$ is little 
(cf.~Example~\ref{ex2:litt}), $\O_{\bs{\lambda}}$ verifies \RC$_2(m)$ for all 
$m\in\N^*$. 

Unfortunately, in types $\bf{B}$, $\bf{C}$, $\bf{D}$, we have not found a nice exhaustive description 
of nilpotent orbits that can be reached by induction from a little nilpotent 
orbit. Computations using \texttt{GAP4} show that a big proportion of partitions 
can be induced from little ones. See Appendix~\ref{app:stat} for some numerical data. 
\end{rem}

\subsection*{Exceptional types} 
Our conclusions for the exceptional types are summarized in Appendix~\ref{app:exc}. 
More precisely, we can find in Appendix~\ref{app:exc} the list of nilpotent orbits in a 
simple Lie algebra of exceptional type which can be induced from a little one.


\section{Applications, remarks and comments} \label{S:app}
We give in this section applications to geometrical properties of 
nilpotent orbit closures. 

\subsection*{Nilpotent orbits closures and complete intersections} 
Let $\O$ be a nilpotent orbit of the reductive Lie algebra $\g$. 

\begin{thm} \label{t:Nam}
If $\O$ verifies \RC$_1$ or \RC$_2(m)$ for some $m\ge 1$, then 
$\bar{\O}$ is not a complete intersection. 
\end{thm}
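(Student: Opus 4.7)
The plan is a direct contradiction argument along the lines already sketched in the ``Motivations'' part of the introduction: assume that $\bar{\O}$ is a complete intersection, deduce that it has rational singularities, and then apply Musta\c{t}\u{a}'s Theorem to get that every $\J_m(\bar{\O})$ is irreducible, contradicting the hypothesis via Lemma~\ref{l:RC}.

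More precisely, suppose for contradiction that $\bar{\O}$ is a complete intersection. First I would verify that $\bar{\O}$ is normal: as recalled in Section~\ref{S:nil}, the singular locus of $\bar{\O}$ equals $\bar{\O}\setminus\O$ and has codimension at least $2$ in $\bar{\O}$, so condition R$_1$ of Serre's criterion holds; since complete intersections are Cohen--Macaulay, condition S$_2$ is automatic, and normality follows. Then I would invoke the results of Hinich \cite{Hi} (or Panyushev \cite{Pa}) which assert that any normal nilpotent orbit closure has rational singularities.

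Having established that $\bar{\O}$ is a locally complete intersection with rational singularities, I can apply Theorem~\ref{t:Mus},(1) (Musta\c{t}\u{a}) to conclude that $\J_m(\bar{\O})$ is irreducible for every $m\ge 1$. On the other hand, Lemma~\ref{l:RC} asserts that if $\O$ verifies \RC$_1$ then $\J_1(\bar{\O})$ is reducible, and that if $\O$ verifies \RC$_2(m)$ for some $m\ge 1$ then $\J_m(\bar{\O})$ is reducible. In either case, this contradicts the irreducibility obtained above, and hence $\bar{\O}$ cannot be a complete intersection.

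There is no real obstacle: each of the four ingredients (codimension $\ge 2$ of the singular locus, Cohen--Macaulayness of complete intersections, the Hinich/Panyushev rationality theorem, and Musta\c{t}\u{a}'s Theorem) is either classical or recalled in the paper, and the reducibility input is exactly Lemma~\ref{l:RC}. The only point worth flagging is that one must explicitly appeal to the codimension-$2$ bound on $\bar{\O}\setminus\O$ to run Serre's criterion; the rest is purely a chain of citations.
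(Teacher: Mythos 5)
Your proof is correct and follows exactly the same route as the paper: assume $\bar{\O}$ is a complete intersection, use codimension $\ge 2$ of $\bar{\O}\setminus\O$ together with Cohen--Macaulayness to get normality, invoke Hinich/Panyushev for rational singularities, apply Musta\c{t}\u{a}'s theorem, and contradict Lemma~\ref{l:RC}. The only difference is that you spell out Serre's criterion explicitly whereas the paper states the normality conclusion more tersely.
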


\begin{proof} 
Since the singular locus of $\bar{\O}$ is $\bar{\O}\setminus \O$ (cf.~Introduction), 
it has codimension at least two in $\bar{\O}$. 
Hence, $\bar{\O}$ is normal if it is a complete intersection. 
If so, by \cite{Hi} or \cite{Pa}, it has rational singularities. The theorem
is then of direct consequence of Theorem \ref{t:Mus}. 
\end{proof}

In the papers of Namikawa, \cite{Na}, and Brion-Fu, \cite{BF},  
the authors use symplectic resolutions of singularities of nilpotent orbit 
closures to prove the above corollary for arbitrary nilpotent orbits in $\g$.  
The foregoing provides an alternative method to obtain that result through 
jet schemes in a large number of cases (see Section \ref{S:csq}).  
There are other approaches in the jet scheme setting to show that 
$\bar{\O}$ is not a complete intersection. Let us give an example.

\begin{ex} \label{ex:Nam}
The computations described in Remark \ref{rk:rect},(2), show that 
for generic $x \in \pi_{\bar{\O_{(3^2)}},1}^{-1}(\O_{(2^2,1^2)})$, the tangent 
space at $x$ of $\J_1(\bar{\O_{(3^2)}})$ has dimension $48= \dim \J_1(\bar{\O_{(3^2)}})$. 
Hence, such an $x$ is a smooth point of $\J_1(\bar{\O_{(3^2)}})$, because $\J_1(\bar{\O_{(3^2)}})$ 
is irreducible, which does not belong to $\pi_{\bar{\O_{(3^2)}},1}^{-1}(\O_{(3^2)})$. So, 
$(\J_1(\bar{\O_{(3^2)}}))_\reg \not= \pi_{\bar{\O_{(3^2)}},1}^{-1}(\O_{(3^2)})$ and 
by Theorem \ref{t:Mus},(3), $\bar{\O_{(3^2)}}$ is not a complete intersection. 

Unfortunately, theses arguments cannot be used for 
the nilpotent orbit $\O_{(2^2)}$ of $\sp_4(\C)$ because, in this case,
the computations of Remark \ref{rk2:sub} show that we exactly have 
$(\J_1(\bar{\O_{(2^2)}}))_\reg = \pi_{\bar{\O_{(2^2)}},1}^{-1}(\O_{(2^2)})$. 
\end{ex}


\subsection*{Examples and counter-examples} 
Our results provide many examples showing that the converse 
of Proposition \ref{p:geo} for irreducibility is not true. 
Since the nilpotent cone $\mathcal{N}(\g)$ is normal, the following 
result illustrates that the converse of Proposition \ref{p:geo} 
 for normality is also not true.  

\begin{prop} \label{p:norm} 
Assume that $\g$ simple, and let $m\in\N$. 
Then $\J_m(\mathcal{N}(\g))$ is normal if and only if $m=0$. 
\end{prop}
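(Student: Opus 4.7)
The plan is to invoke Serre's criterion for normality. Since $\J_0(\mathcal{N}(\g))\simeq\mathcal{N}(\g)$ is normal by Kostant, the content is to show $\J_m(\mathcal{N}(\g))$ fails to be normal for every $m\ge 1$. By \cite[Prop.\,1.4 and 1.5]{Mu}, $\J_m(\mathcal{N}(\g))$ is a complete intersection, irreducible and reduced, hence Cohen--Macaulay and $S_2$; so it suffices to exhibit a codimension-$1$ part of its singular locus.

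The first step is a generalization of Theorem~\ref{t:Mus},(3) to all $m\ge 1$: for $X$ a locally complete intersection, $(\J_m(X))_\sing=\pi_{X,m}^{-1}(X_\sing)$. At any $y=(\gamma_0,\ldots,\gamma_m)\in\J_m(X)$, the Jacobian of the jet equations $f_k^{(j)}$ is block lower triangular with every diagonal block equal to $A:={\rm Jac}(f_1,\ldots,f_r)(\gamma_0)$. When $A$ has full row rank (i.e.\ $\gamma_0\in X_\reg$), the block matrix attains the maximal rank $r(m+1)$ by independence of the diagonal rows; when $A$ is rank deficient, the left kernel of the full Jacobian contains every $(w_0,0,\ldots,0)$ with $w_0\in\ker A^{T}$ nonzero, so its rank drops strictly below $r(m+1)$ and $y$ is singular.

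Applying this to $X=\mathcal{N}(\g)$, whose singular locus is $\bar{\O_{\rm subreg}}$, I bound $\dim\pi_m^{-1}(\O_{\rm subreg})$ from below by induction on $m$. Fix $e\in\O_{\rm subreg}$. For $m=1$, $\dim\pi_1^{-1}(e)=\dim T_e\mathcal{N}(\g)\ge\dim\mathcal{N}(\g)+1$ since $e$ is singular. For the inductive step, consider $\pi_{m,m-1}\colon\pi_m^{-1}(e)\to\pi_{m-1}^{-1}(e)$: its fiber over $(e,\gamma_1,\ldots,\gamma_{m-1})$ is cut out by an affine linear system $A\gamma_m=-c(\gamma_1,\ldots,\gamma_{m-1})$ for an explicit $c$; when non-empty it has dimension $\dim\mathcal{N}(\g)+k$, where $k=r-\mathrm{rank}\,A\ge 1$, and the image has codimension at most $k$ in $\pi_{m-1}^{-1}(e)$. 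The two $k$'s cancel to give $\dim\pi_m^{-1}(e)\ge\dim\pi_{m-1}^{-1}(e)+\dim\mathcal{N}(\g)$, hence by induction $\dim\pi_m^{-1}(e)\ge m\dim\mathcal{N}(\g)+1$. Combining with $\dim\O_{\rm subreg}=\dim\mathcal{N}(\g)-2$ yields $\dim(\J_m(\mathcal{N}(\g)))_\sing\ge\dim\J_m(\mathcal{N}(\g))-1$, so $R_1$ fails and $\J_m(\mathcal{N}(\g))$ is not normal.

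The main obstacle I expect is step two together with the rank-deficit bookkeeping of the induction: one must carefully check that the block lower triangular Jacobian attains full rank whenever the diagonal does, and that the codimension of the image of $\pi_{m,m-1}$ and the fiber dimension genuinely combine as claimed. Both reduce to elementary linear algebra but need to be laid out cleanly.
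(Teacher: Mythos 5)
Your proof is correct, and it takes a genuinely different route from the paper's. The overall strategy (Serre's criterion applied to a complete intersection, so that non-normality reduces to exhibiting a codimension-one singular locus) is the same, but both halves are argued differently. For the identification $(\J_m(\mathcal{N}(\g)))_\sing=\pi_{\mathcal{N}(\g),m}^{-1}(\bar{\O_{\rm subreg}})$, the paper invokes a lemma of Ra\"is--Tauvel \cite[Lem.\,3.3,(i)]{RT} on the independence of the differentials $\d p_i^{(j)}$, whereas you give a self-contained linear-algebra argument from the block lower-triangular structure of the Jacobian of the jet equations; this has the advantage of proving the more general statement for any $X$ that is locally a complete intersection with $\J_m(X)$ irreducible and reduced, which is implicitly the content of Theorem~\ref{t:Mus},(3) extended to all $m$. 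For the dimension bound, the paper uses its orbit-induction machinery (Remark~\ref{rk3:ind} together with Lemma~\ref{l:zero},(2)) to reduce to $\sl_2(\C)$, while you do a direct induction on $m$ via the affine-linear fibers of $\pi_{m,m-1}$ over $\pi_{m-1}^{-1}(e)$, which is more elementary and independent of Section~\ref{S:ind}. The paper's reduction is shorter within its own framework; your argument is portable and would read naturally in Section~\ref{S:jet}.

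One detail you rightly flag as needing care: the step ``the two $k$'s cancel'' requires that the locus $V\subset\pi_{m-1}^{-1}(e)$ where $-c\in\operatorname{Im}A$, being cut out by $k$ functions, actually has codimension at most $k$ in the \emph{ambient} fiber (not merely in some component). Since $\pi_{m-1}^{-1}(e)$ may be reducible, this needs the origin $(e,0,\ldots,0)$ to lie in every irreducible component. This follows from the quasi-homogeneous $\C^*$-action $\gamma_l\mapsto\lambda^l\gamma_l$ (with $\gamma_0=e$ fixed), under which each $p_i^{(j)}$ is homogeneous of weight $j$, so every component is $\C^*$-stable and degenerates to the origin as $\lambda\to 0$. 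Once this is noted, the inductive inequality $\dim\pi_m^{-1}(e)\ge\dim\pi_{m-1}^{-1}(e)+\dim\mathcal{N}(\g)$ holds and the rest of your argument goes through.
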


\begin{proof}  
Since $\J_0(\mathcal{N}(\g)) \simeq \mathcal{N}(\g)$ is normal, 
we have to show that for any $m \in\N^*$, $\J_m(\mathcal{N}(\g))$ is not normal. 

Fix $m\in\N^*$. 
Let $\ell$ be the rank of $\g$, and let $p_1,\ldots,p_\ell$ be homogeneous 
generators of $\C[\g]^G$ so that 
$$
\mathcal{N}(\g)=\Spec \C[\g]/(p_1,\ldots,p_\ell).
$$
By Remark \ref{rk:aff}, we get
$$
\J_m(\mathcal{N}(\g))\simeq \Spec \C[\g_m]/(p_i^{(j)};\, i=1,\ldots,\ell,\,j=0,\ldots,m). 
$$
Since $\mathcal{N}(\g)$ is a complete intersection with rational singularities, 
$\J_m(\mathcal{N}(\g))$ is irreducible and reduced by Theorem \ref{t:Mus}. 
So, it is generically reduced and we have 
\begin{eqnarray} \label{eq:norm}
(\J_m(\mathcal{N}(\g)))_{\rm reg} &=&  
\{x = x_0 + x_1 t +\cdots x_m t^m \in \J_m(\mathcal{N}(\g)) \; | \; \d p_i^{(j)}(x_0,x_1,\ldots,x_m) 
\\\nonumber
&& \qquad\quad \text{ are linearly independent for } i=1,\ldots,\ell 
\text{ and }j=0,\ldots,m \}.
\end{eqnarray}
According to \cite[Lem.\,3.3,(i)]{RT}, the vectors $\d p_i^{(j)}(x_0,x_1,\ldots,x_m)$, 
for $i\in\{1,\ldots,\ell\}$, $j\in\{0,\ldots,m\}$ and $x_0 + x_1 t +\cdots x_m t^m \in \g_m$,  
are linearly independent if and only if the vectors 
$\d p_1(x_0),\ldots,\d p_\ell(x_0)$ are linearly independent. 
But by \cite{Ko}, the later condition is satisfied if and only if $x_0$ is a regular 
element of $\g$. 
Therefore by (\ref{eq:norm}), we get 
\begin{eqnarray} \label{eq2:norm}
(\J_m(\mathcal{N}(\g)))_{\rm reg} = \pi_{\mathcal{N}(\g),m}^{-1}(\O_{\reg}) 
& \text{ and } & (\J_m(\mathcal{N}(\g)))_{\rm sing} = \pi_{\mathcal{N}(\g),m}^{-1}(\overline{\O_{\rm subreg}}) 
\end{eqnarray}
since $\mathcal{N}(\g)\setminus \O_\reg =\bar{\O_{\rm subreg}}$.
Then by Serre's criterion, it is enough to show that
$\pi_{\mathcal{N}(\g),m}^{-1}(\overline{\O_{\rm subreg}})$ has codimension one in 
$\J_m(\mathcal{N}(\g))$, or else that 
\begin{eqnarray} \label{eq3:norm}
\dim \pi_{\mathcal{N}(\g),m}^{-1}(\overline{\O_{\rm subreg}}) 
\ge \dim \J_m(\mathcal{N}(\g)) -1.
\end{eqnarray}
The zero orbit of $\sl_2(\C)$ has codimension 2 in $\mathcal{N}(\sl_2(\C))$. 
Hence, for dimension reasons, $\O_{\rm subreg}$ is the induced nilpotent orbit 
from $0$ in any Levi subalgebra $\l$ of $\g$ with semisimple part $[\l,\l]$ isomorphic to $\sl_2(\C)$. 
So by Remark \ref{rk3:ind}, 
in order to prove (\ref{eq3:norm}), it suffices to show the statement for $\g=\sl_2(\C)$. 

If $\g=\sl_2(\C)$, then $\overline{\O_{\rm subreg}}=0$ but by Lemma \ref{l:zero},(2),  
$$\dim \pi_{\mathcal{N}(\sl_2(\C)),m}^{-1}(0) 
\ge \dim \J_{m-2}(\mathcal{N}(\sl_2(\C))) + \dim \sl_2(\C) = 
2 (m-1) +3 = 2m +1, 
$$
whence the expected result 
since $\dim \J_m(\mathcal{N}(\sl_2(\C))) = 2(m+1) =2m+2$. 
\end{proof}

\begin{rem} \label{rk:norm} 
The equalities (\ref{eq2:norm}) for $m=1$ is also a consequence of 
Theorem~\ref{t:Mus},(3). 
\end{rem}

We now give an example illustrating that the converse of Proposition \ref{p:geo} 
is also not true for reducedness.  

\begin{ex} \label{ex:red}
The scheme $\J_1(\mathcal{N}(\sl_2(\C)))$ is irreducible and reduced. 
We readily obtain from the description of $\J_1(\mathcal{N}(\sl_2(\C)))$ 
given in Example \ref{ex:aff} that $\J_1(\J_1(\mathcal{N}(\sl_2(\C))))$ is defined by 
the ideal $\mathcal{J}$ of 
$$
\C[x_0,y_0,z_0,x_1,y_1,z_1,x'_0,y'_0,z'_0,x'_1,y'_1,z'_1]
$$ 
generated by the polynomials 
$$
x_0^2 + y_0z_0, \; 2x_0 x_1 +y_0 z_1+ y_1 z_0,
$$ 
$$
2 x_0 x'_0+ y_0 z'_0 + z_0 y'_0, \; 
2 x_0 x'_1 +2 x_1 x'_0 + y_0 z'_1 + y_1 z'_0 + z_1 y'_0 + z_0 y'_1.
$$
A computation made with the program \texttt{Macaulay2} shows that 
$\mathcal{J}$ is not radical, and that the radical of $\mathcal{J}$ is the 
intersection of two prime ideals. So, $\J_1(\J_1(\mathcal{N}(\sl_2(\C))))$ 
is neither reduced nor irreducible. 
\end{ex}

Example \ref{ex:red} gives another evidence that $\J_1(\mathcal{N}(\sl_2(\C)))$ 
does not have rational singularities (cf.~Proposition \ref{p:norm}). Indeed, if it had so, then 
by Theorem \ref{t:Mus}, $\J_1(\J_1(\mathcal{N}(\sl_2(\C))))$ would be irreducible 
(and reduced) because $\J_1(\mathcal{N}(\sl_2(\C)))$ is a complete intersection. 

We now turn to other interesting phenomena. 

\begin{ex} \label{ex2:2p}
As it has been observed in Example \ref{ex:2p}, for the nilpotent orbit $\O_{(2^p)}$ 
of $\sl_{2p}(\C)$, with $p\ge 2$, $\J_1(\bar{\O_{(2^p)}})$ is reducible and 
$$
\dim \pi_{\bar{\O_{(2^p)}},1}^{-1}\left((\bar{\O_{(2^p)}})_\sing\right)< 
\dim \pi_{\bar{\O_{(2^p)}},1}^{-1}(\O_{(2^p)}).
$$  
This shows that 
Lemma~\ref{l:irr},(3), does not hold in general if $X$ is not a complete intersection. 
\end{ex} 

\begin{ex} \label{ex:33}
As it has been observed in Remark \ref{rk:rect},(2), for the nilpotent orbit $\O_{(3^2)}$ 
of $\sl_6(\C)$, $\J_1(\bar{\O_{(3^2)}})$ is irreducible and  
$$\left(\J_1(\bar{\O_{(3^2)}})\right)_\reg \not= \pi_{\bar{\O_{(3^2)}},1}^{-1}(\O_{(3^2)}).$$ 
This shows that Theorem \ref{t:Mus},(3), is not true for non locally complete 
intersection varieties. 
\end{ex}

\begin{ex} \label{ex:22}
For the nilpotent orbit $\O_{(2^2)}$ of $\sp_4(\C)$, 
we have observed (cf.~Remark~\ref{rk2:sub}) that 
$$\left(\J_1(\bar{\O_{(2^2)}})\right)_\reg = \pi_{\bar{\O_{(2^2)}},1}^{-1}(\O_{(2^2)}).$$
This shows that the equality of Theorem \ref{t:Mus},(3) may hold 
even if $X$ is not locally a complete intersection. 
\end{ex}

%
%
%
%
%

\subsection*{Questions and remarks} 

Although we have determined the reducibliity of the closure of many nilpotent orbits,
we would like to complete the cases which our methods do not apply. Here are 
some open questions. 

\begin{question}
We have seen that jet schemes of nilpotent orbits in $\sl_n(\C )$ corresponding to rectangular 
partitions can be irreducible or reducible. Is there an explicit characterization ? 
\end{question}

\begin{question}
In all our examples of nilpotent orbits $\O$ with $\J_1(\bar{\O})$ reducible, the orbit $\O$ verifies \RC$_1$ or \RC$_2(1)$. 
Are these conditions necessary or are there examples of $\O$ for which $\J_1(\bar{\O})$ is reducible and that verify neither 
\RC$_1$ nor \RC$_2(1)$?
\end{question}

We have used the reducibility of jet schemes to study the property of complete intersection for nilpotent orbit closures. 
It is very likely  that  other geometrical properties of nilpotent orbit closures can be 
studied using jet schemes.

\appendix

\section{Nilpotent orbits in classical simple Lie algebras}   \label{app:not}
We fix in this appendix some notations, and basic results, relative to nilpotent 
orbits in simple Lie algebras of classical type. Our main references are 
\cite{CMa,Ke}. 
The results concerning the induction of nilpotent orbits are mostly 
taken from \cite{Ke}.

Let $n\in \N^{*}$, and denote by $\P(n)$ the set of partitions of $n$. 
As a rule, unless otherwise specified, we write an element $\bs{\lambda}$ of $\P(n)$ 
as a decreasing sequence $\bs{\lambda}=(\lambda_1,\ldots,\lambda_r)$ 
omitting the zeroes. 
Thus, 
$$
\lambda_1 \ge \cdots \ge \lambda_r \ge 1\quad \text{ and }\quad  
\lambda_1 + \cdots + \lambda_r  = n.
$$ 

We shall denote the dual partition of a partition $\bs{\lambda}\in\P(n)$ by $^t\!{\bs{\lambda}}$.
The {\em concatenation} of two partitions $\bs{\lambda}$ and $\bs{\lambda}'$ will be the rearrangement 
of the parts in decreasing order, and shall be denoted by 
$\bs{\lambda} \smile \bs{\lambda}'$. 

Let us denote by $\geqslant$ the partial order on $\P(n)$ relative to dominance. More precisely, given
$\bs{\lambda} = ( \lambda_{1},\cdots , \lambda_{r}) , \bs{\mu} = (\mu_{1},\dots ,\mu_{s}) \in \P (n)$,  
we have $\bs{\lambda}\ \geqslant \bs{\mu}$ if 
$$
\sum_{i=1}^{k} \lambda_{i} \geqslant \sum_{i=1}^{k} \mu_{i}
$$
for $1\leqslant k\leqslant \min (r,s)$.

\smallskip

\subsection*{Case $\sl_n(\C)$}~

According to \cite[Thm.\,5.1.1]{CMa}, nilpotent orbits of $\sl_{n}(\C)$ are 
parametrized by $\P(n)$. For $\bs{\lambda}\in \P(n)$, we shall denote by 
$\O_{\bs{\lambda}}$ the corresponding nilpotent orbit of $\sl_n(\C)$, and if we write
$^t\!{\bs{\lambda}} = ( d_{1}, \dots ,d_{s})$, then
$$
\dim \O_{\bs{\lambda}} = n^{2} - \sum_{i=1}^{s} d_{i}^{2}.
$$ 
Also, if $\bs{\lambda}, \bs{\mu} \in \P (n)$, then $\O_{\bs{\mu}} \subset \bar{\O_{\bs{\lambda}}}$ if and only if
$\bs{\mu} \leqslant \bs{\lambda}$.

The Levi subalgebras of $\sl_n(\C)$ are parametrized by compositions of $n$. 
Let $\mathbf{m} = (m_{1},\dots ,m_{r})$ be a composition of $n$, and 
let $\bs{\lambda}=(\bs{\lambda}^{(1)}, \dots ,\bs{\lambda}^{(r)}) \in \P(m_{1}) 
\times \cdots \times \P(m_{r})$. 
It corresponds to a nilpotent orbit in the Levi subalgebra associated to the composition 
$\mathbf{m}$.  Set 
$$
\bs{\mu} :=\, ^t\! \bs{\lambda} ^{(1)} \smile \cdots \smile
\,^t\! \bs{\lambda} ^{(r)} \quad \text{ and } \quad \bs{\nu} = \, 
^t \!{\bs{\mu}}.
$$
Then the partition associated to the induced nilpotent orbit from 
$\O_{(\bs{\lambda} ^{(1)}, \dots , \bs{\lambda} ^{(r)})}$ is $\bs{\nu}$. 
Note that we have $\nu_{i} = \lambda_{i}^{(1)} + \cdots + \lambda_{i}^{(k)}$ 
which is much simpler to compute in practice.
We shall denote $\bs{\nu}$ by 
${\rm Ind}_{\mathbf{m}}^{n} (\bs{\lambda} ^{(1)}, \dots ,\bs{\lambda} ^{(r)})$ and we shall say that 
$\bs{\nu}$ is {\em induced from} $(\bs{\lambda} ^{(1)}, \dots ,\bs{\lambda} ^{(r)})$.

\smallskip

\subsection*{Case $\so_n(\C)$}~

For $n\in\N^*$, set 
$$
\P_{1}(n):=\{\bs{\lambda} \in \P(n)\; ; \; \text{number of parts of each even 
number is even}\}.
$$ 
According to \cite[Thm.\,5.1.2 and 5.1.4]{CMa}, nilpotent orbits of $\so_{n}(\C)$ 
are parametrized by $\P(n)$, with the exception that each {\em very even} 
partition $\bs{\lambda} \in\P_{1}(n)$ (i.e., $\bs{\lambda}$ has only even parts) 
corresponds to two nilpotent orbits.
For $\bs{\lambda}\in \P_1(n)$, not very even, we shall denote by 
$\O_{\bs{\lambda}}$ the corresponding nilpotent orbit of $\so_n(\C)$. 
For very even $\bs{\lambda}\in \P_1(n)$, we shall denote by $\O_{\bs{\lambda}}^{I}$ 
and $\O_{\bs{\lambda}}^{I\!I}$ the two corresponding nilpotent orbits of $\so_n(\C)$.  
In fact, their union form a single ${\rm O}_{n}(\C)$-orbit. 

Let $\bs{\lambda} =(\lambda_{1},\dots ,\lambda_{r}) \in \P_1(n)$ and 
$^t\!{\bs{\lambda}} = (d_{1},\dots ,d_{s})$, then 
$$
\dim \O_{\bs{\lambda}}^{\bullet} = \frac{n(n-1)}{2} - \frac{1}{2}\left( \sum_{i=1}^{s} d_{i}^{2}
-  \sharp\{ i ; \lambda_{i} \hbox{ odd} \}  \right) 
$$
where $\O_{\bs{\lambda}}^{\bullet}$ is either $\O_{\bs{\lambda}}$, $\O_{\bs{\lambda}}^{I}$ or $\O_{\bs{\lambda}}^{II}$ according 
to whether $\bs{\lambda}$ is very even or not. Using the same notations, if $\bs{\lambda} , \bs{\mu} \in \P_{1} (n)$, then 
$\bar{\O_{\bs{\mu}}^{\bullet}} \subsetneq  \bar{\O_{\bs{\lambda}}^{\bullet}}$ if and only if
$\bs{\mu} < \bs{\lambda}$. 

Given $\bs{\lambda} \in\P(n)$, there exists a unique $\bs{\lambda} ^{+}
\in \P_{1}(n)$ such that $\bs{\lambda} ^{+} \le \bs{\lambda} $, and if 
$\bs{\mu}\in\P_{1}(n)$ verifies $\bs{\mu} \le\bs{\lambda} $, then $\bs{\mu}\le \bs{\lambda} ^{+}$.
More precisely, let $\bs{\lambda}  = (\lambda_{1},\dots ,\lambda_{n})$ (adding zeroes if necessary). 
If $\bs{\lambda} \in \P_{1}(n)$, then $\bs{\lambda} ^{+}  =\bs{\lambda} $. 
Otherwise if $\bs{\lambda} \not\in \P_{1}(n)$, set
$$
\bs{\lambda} ' = (\lambda_{1},\dots ,\lambda_{r},\lambda_{r+1}-1,\lambda_{r+2},\dots ,
\lambda_{s-1},\lambda_{s}+1,\lambda_{s+1},\dots, \lambda_{n})
$$
where $r$ is maximum such that $(\lambda_{1},\dots ,\lambda_{r}) \in 
\P_{1}(\lambda_{1}+\cdots + \lambda_{r})$, and $s$ is the index of the first even part
in $(\lambda_{r+2},\dots ,\lambda_{n})$. Note that $r=0$ if such a maximum does not 
exist, while $s$ is always defined. If $\bs{\lambda} '$ is not in $\P_{1}(n)$, then we repeat 
the process until we obtain an element of $\P_{1}(n)$ which will be our $\bs{\lambda} ^{+}$.

The Levi subalgebras in $\so_n(\C)$ are parametrized by 
$$
\mathcal{L}(n):= \left\{ (p_{1},\dots ,p_{k} ; r ) \, ; \, 2 \sum_{i=1}^{k} p_{i} + r = n \right\}.
$$
Let $(p_{1},\dots ,p_{k} ; r ) \in \mathcal{L}(n)$,
$(\bs{\lambda} ^{(1)}, \dots ,\bs{\lambda} ^{(k)} ) \in \P (p_{1})\times \cdots \times \P(p_{k})$ 
and $\bs{\mu} \in \P_{1}(r)$, and set
$$
\bs{\nu} := {\rm Ind}_{( p_{1},\dots ,p_{k} , r ,p_{k},\dots ,p_{1})}^{n} 
(\bs{\lambda}^{(1)}, \dots ,\bs{\lambda} ^{(k)} , \bs{\mu} , \bs{\lambda}^{(k)}, \dots,
\bs{\lambda} ^{(1)})
$$ 
in the notations of the $\sl_n(\C)$ case. 
Thus $\bs{\nu}$ is the partition associated to the nilpotent orbit in $\sl_{n}(\C)$ 
induced from the nilpotent orbit  in the Levi subalgebra of $\sl_{n}(\C)$ associated to 
the composition $( p_{1},\dots ,p_{k} , r ,p_{k},\dots ,p_{1})$ and 
the multi-partition $(\bs{\lambda}^{(1)}, \dots ,\bs{\lambda} ^{(k)} ,
 \bs{\mu}, \bs{\lambda}^{(k)},\dots ,\bs{\lambda} ^{(1)})$. The partition associated to the 
 nilpotent orbit induced from  
$(\bs{\lambda} ^{(1)}, \dots ,\bs{\lambda} ^{(k)}  ; \bs{\mu})$ is $\bs{\nu}^{+}$. 
We shall denote $\bs{\nu}^{+}$ by ${\rm Ind}_{(p_{1},\dots,p_{k} ; r)}^{n,+} 
(\bs{\lambda} ^{(1)}, \dots ,\bs{\lambda} ^{(k)}  ; \bs{\mu})$.
The partition $\bs{\lambda} \in \P_{1}(n)$ corresponds to a rigid orbit if and only if 
\begin{itemize}
\item[(i)\;] $\lambda_{i}-\lambda_{i+1} \le 1$ for all $i$, so the last part of $\bs{\lambda}$ is $1$.
\item[(ii)] No odd number occurs exactly twice in $\bs{\lambda} $.
\end{itemize}

Note that in the case of a very even partition $\bs{\lambda}$, $\bs{\nu}^{+}$ is also very even partition,
and we obtain both nilpotent orbits corresponding to $\bs{\nu}^{+}$ via induction of the nilpotent
orbits corresponding to $\bs{\lambda}$, cf.~\cite[Thm.\,7.3.3,(iii)]{CMa}. 

\smallskip

\subsection*{Case $\sp_{2n}(\C)$.} ~ 

For $n\in\N^*$, set 
$$
\P_{-1}(2n):=\{\bs{\lambda} \in \P(2n)\; ; \; \text{number of parts of each odd  
number is even}\}.
$$ 
According to  \cite[Thm.\,5.1.3]{CMa}, nilpotent orbits of $\sp_{2n}(\C)$ 
are parametrized by $\P_{-1}(2n)$. 
For $\bs{\lambda}=(\lambda_{1},\dots ,\lambda_{r})\in \P_{-1}(2n)$, we shall denote by 
$\O_{\bs{\lambda}}$ the corresponding nilpotent orbit of $\sp_{2n}(\C)$,
and if we write $^t\!{\bs{\lambda}} = (d_{1},\dots ,d_{s})$, then 
$$
\dim \O_{\bs{\lambda}} = n(2n+1) - \frac{1}{2}\left( \sum_{i=1}^{s} d_{i}^{2}
+  \sharp\{ i ; \lambda_{i} \hbox{ odd} \}  \right).
$$
As in the case of $\sl_{n}(\C )$, if $\bs{\lambda}, \bs{\mu} \in \P_{-1} (2n)$, then 
$\O_{\bs{\mu}} \subset \bar{\O_{\bs{\lambda}}}$ if and only if
$\bs{\mu} \leqslant \bs{\lambda}$. 

Given $\bs{\lambda}  \in \P(2n)$, there exists a unique $\bs{\lambda} ^{-} \in \P_{-1}(2n)$ 
such that $\bs{\lambda} ^{-} \le \bs{\lambda} $, and if $\bs{\mu}\in\P_{-1}(2n)$ verifies 
$\bs{\mu} \leq \bs{\lambda} $, then $\bs{\mu}\leq \bs{\lambda} ^{-}$. The construction of 
$\bs{\lambda} ^{-}$ is the same as in the orthogonal case except that $s$ is the index 
of the first odd part in $(\lambda_{r+2},\dots ,\lambda_{2n})$.

As in the orthogonal case, Levi subalgebras are parametrized by $\mathcal{L}(2n)$. 
Let us conserve the same notations as in the orthogonal case. 
The partition associated to the nilpotent orbit induced from $(\bs{\lambda} ^{(1)}, \dots ,\bs{\lambda} ^{(k)} ; \bs{\mu})$ is $\bs{\nu}^{-}$. 
We shall denote $\bs{\nu}^{-}$ by ${\rm Ind}_{(p_{1},\dots,p_{k} ; r)}^{2n,-} 
(\bs{\lambda} ^{(1)}, \dots ,\bs{\lambda} ^{(k)}  ; \bs{\mu})$. 
The partition $\bs{\lambda} \in\P_{-1}(2n)$ corresponds to a rigid orbit if and only if 
\begin{itemize}
\item[(i)\;] $\lambda_{i}-\lambda_{i+1} \leqslant 1$ for all $i$, so the last part of 
$\bs{\lambda} $ is $1$.
\item[(ii)] No even number occurs exactly twice in $\bs{\lambda} $.
\end{itemize}

\section{Statistics in types $\bf{B}$, $\bf{C}$ and $\bf{D}$}\label{app:stat} 
As mentioned in Remark \ref{rk:BCD}, many nilpotent orbits in $\so_{n}(\C)$ and $\sp_{2n}(\C)$
can be obtained by induction from little nilpotent orbits. In particular, these induced orbits 
verify \RC$_{2}(m)$ for all $m\in\N^{*}$. Computations using \texttt{GAP4} gave us the following 
numerical data supporting our claim. 

For  $\varepsilon\in\{ -1, 1\}$ and $n\in\mathbb{N}^{*}$,
we denote by $\P_{\varepsilon}^{\ell}(n)$ the set of partitions in $\P_{\varepsilon}(n)$
induced from little ones.

\smallskip

\subsection*{Case $\so_{n}(\C )$}~

{\scriptsize
$$
\begin{array}{||c|c|c||c|c|c||c|c|c||c|c|c||c|c|c||}
n  & \sharp \P_{1}^{\ell}(n) & \sharp\P_{1}(n) &
n  & \sharp \P_{1}^{\ell}(n) & \sharp\P_{1}(n) &
n  & \sharp \P_{1}^{\ell}(n) & \sharp\P_{1}(n) &
n  & \sharp \P_{1}^{\ell}(n) & \sharp\P_{1}(n) &
n  & \sharp \P_{1}^{\ell}(n) & \sharp\P_{1}(n) 
 \\ [2pt] \hline
2 & 0 & 1 & 12 & 20 & 28 & 22 & 195 & 236 & 32 & 1223 & 1431 & 42 & 6064 & 6868 \\ \hline 
3 & 0 & 2 & 13 & 27 & 35 & 23 & 250 & 287 & 33 & 1474 & 1687 & 43 & 7086 & 7967 \\ \hline 
4 & 1 & 3 & 14 & 32 & 43 & 24 & 291 & 350 & 34 & 1710 & 1981 & 44 & 8182 & 9233 \\ \hline 
5 & 1 & 4 & 15 & 45 & 55 & 25 & 367 & 420 & 35 & 2039 & 2331 & 45 & 9536 & 10670 \\ \hline 
6 & 2 & 5 & 16 & 52 & 70 & 26 & 423 & 501 & 36 & 2370 & 2741 & 46 & 10986 & 12306 \\ \hline 
7 & 4 & 7 & 17 & 73 & 86 & 27 & 527 & 602 & 37 & 2821 & 3206 & 47 & 12748 & 14193 \\ \hline 
8 & 6 & 10 & 18 & 83 & 105 & 28 & 609 & 722 & 38 & 3265 & 3740 & 48 & 14667 & 16357 \\ \hline 
9 & 9 & 13 & 19 & 111 & 130 & 29 & 751 & 858 & 39 & 3852 & 4368 & 49 & 16974 & 18803 \\ \hline 
10 & 10 & 16 & 20 & 130 & 161 & 30 & 869 & 1016 & 40 & 4460 & 5096 & 50 & 19485 & 21581 \\ \hline 
11 & 16 & 21 & 21 & 170 & 196 & 31 & 1055 & 1206 & 41 & 5242 & 5922 & 51 & 22464 & 24766 
\end{array}
$$
}
\subsection*{Case $\sp_{2n}(\C )$} ~

{\scriptsize
$$
\begin{array}{||c|c|c||c|c|c||c|c|c||c|c|c||}
n  & \sharp \P_{-1}^{\ell}(2n) & \sharp\P_{-1}(2n) &
n  & \sharp \P_{-1}^{\ell}(2n) & \sharp\P_{-1}(2n) &
n  & \sharp \P_{-1}^{\ell}(2n) & \sharp\P_{-1}(2n) &
n  & \sharp \P_{-1}^{\ell}(2n) & \sharp\P_{-1}(2n) 
 \\ [2pt] \hline
1 & 0 & 2 & 7 & 45 & 64 & 13 & 594 & 728 & 19 & 4652 & 5400 \\ \hline 
2 & 1 & 4 & 8 & 77 & 100 & 14 & 857 & 1040 & 20 & 6374 & 7336 \\ \hline 
3 & 3 & 8 & 9 & 119 & 154 & 15 & 1223 & 1472 & 21 & 8677 & 9904 \\ \hline 
4 & 9 & 14 & 10 & 182 & 232 & 16 & 1726 & 2062 & 22 & 11728 & 13288 \\ \hline 
5 & 15 & 24 & 11 & 273 & 344 & 17 & 2421 & 2864 & 23 & 15755 & 17728 \\ \hline 
6 & 28 & 40 & 12 & 409 & 504 & 18 & 3378 & 3948 & 24 & 21061 & 23528
\end{array}
$$
}

\section{Tables for exceptional types} \label{app:exc}
We list below nilpotent orbits in a simple Lie algebra of exceptional type precising when possible
whether they are \RC$_{1}$ or \RC$_{2}(m)$. Condition \RC$_{1}$ is checked using Propositions \ref{p:eta}, \ref{p:litt},
\ref{p:res} and Remark \ref{rk:eta}. When condition \RC$_{1}$ is obtained via Proposition \ref{p:res}, we give
an example of the bigger simple Lie algebra and the little nilpotent orbit 
satisfying condition (iii) of Proposition~\ref{p:res} from which it is obtained. 

As for determining whether condition \RC$_{2}(m)$ is verified, our main method is to
list the orbits induced by nilpotent orbits that have a little factor (Theorem \ref{t2:ind}). 
Thus they are \RC$_{2}(m)$ for all $m\in\N^{*}$.
Since induction is transitive, we can proceed by induction on 
the rank of the Lie algebra, where at each step, we only need to consider induction from orbits in maximal Levi subalgebras 
which are themselves induced from nilpotent orbits with a little factor. 
For an orbit verifying condition \RC$_{2}(m)$, we give an example of 
a maximal Levi subalgebra $\l$ and an orbit in $\l$ induced from a nilpotent orbit with a little factor.  

In both cases, if the orbit is little, then we just label it little. The subscript of an orbit 
indicates either its characteristics or the associated partition or its Bala-Carter label. 
If a superscript of an orbit is present, it indicates the corresponding maximal Levi subalgebra.

We have omitted the zero orbit and the regular orbit because they are neither  \RC$_{1}$ nor \RC$_{2}(m)$.

All the computations are done using the package \texttt{sla} of \texttt{GAP4}.

\subsection*{Type $\bf{G_{2}}$}~

$$
\begin{Dynkin}
\Dbloc{\Dcirc\Ddoubleeast\Deast\Dtext{t}{1}}\Dleftarrow
\Dbloc{\Dcirc\Ddoublewest\Dwest\Dtext{t}{2}}
\end{Dynkin}
$$

$$
\begin{array}{c|c|c|c|c|c}
\multicolumn{2}{c|}{\O} & \dim \O & RC_{1} & RC_{2} & \hbox{rigid}  \\ \hline
\tvi A_{1} & [0,1] & 6 & \oui \leftarrow \hbox{little} & \oui \leftarrow \hbox{little} & \oui  \\ \hline
\tvi \tilde{A}_{1} & [1,0] & 8 & \non  & ?  & \oui\\ \hline
\tvi G_{2}(a_{1}) & [2,0] & 10 & \non & ? & \non
\end{array}
$$

\noindent\hrulefill
\bigskip

\subsection*{Type $\bf{F_{4}}$}~
$$
\begin{Dynkin}
\Dbloc{\Dcirc\Deast\Dtext{t}{1}}
\Dbloc{\Dcirc\Dwest\Ddoubleeast\Dtext{t}{2}}\Drightarrow
\Dbloc{\Dcirc\Ddoublewest\Deast\Dtext{t}{3}}
\Dbloc{\Dcirc\Dwest\Dtext{t}{4}}
\end{Dynkin}
$$

$$
\begin{array}{c|c|c|c|c|c}
\multicolumn{2}{c|}{\O}&  \dim \O & RC_{1} & RC_{2} & \hbox{rigid}  \\ \hline
\tvi A_{1} & [ 1, 0, 0, 0 ] & 16 & \oui \leftarrow \hbox{little} & \oui \leftarrow \hbox{little} & \oui   \\ \hline
\tvi \tilde{A_{1}} & [ 0, 0, 0, 1 ] & 22 & \oui \leftarrow \hbox{little}  & \oui \leftarrow \hbox{little} &  \oui     \\ \hline
\tvi A_{1} + \tilde{A}_{1} & [ 0, 1, 0, 0 ] & 28 & \non & ? & \oui  \\ \hline
\tvi A_{2} & [ 2, 0, 0, 0 ] & 30 &\non  &  ?  & \non \\ \hline 
\tvi \tilde{A}_{2} & [ 0, 0, 0, 2 ] & 30 & \non  &  ? & \non \\ \hline
\tvi A_{2} + \tilde{A}_{1} & [ 0, 0, 1, 0 ] & 34 & \non   & ? & \oui \\ \hline
\tvi B_{2} & [ 2, 0, 0, 1 ] & 36 & \non   & \oui \leftarrow  \O_{\rm min}^{\{2,3,4\}} & \non \\ \hline
\tvi \tilde{A}_{2} + A_{1} & [ 0, 1, 0, 1 ] & 36 & \non & ?  & \oui \\ \hline
\tvi C_{3}(a_{1}) & [ 1, 0, 1, 0 ] & 38 & \non   &  \oui \leftarrow  \O_{\rm min}^{\{1,2,3\}} & \non \\ \hline
\tvi F_{4}(a_{3}) & [ 0, 2, 0, 0 ] & 40 & \non   & \oui \leftarrow  \O_{[0,1,0]}^{\{2,3,4\}} & \non  \\ \hline
\tvi B_{3} & [ 2, 2, 0, 0 ] & 42 & \non   & ?  & \non  \\ \hline
\tvi C_{3} & [ 1, 0, 1, 2 ] & 42 & \non  &  ? & \non  \\ \hline
\tvi F_{4}(a_{2}) & [ 0, 2, 0, 2 ] & 44 & \non   & \oui \leftarrow \O_{(2,1),(1^{2})}^{\{1,2,4\}} & \non  \\ \hline
\tvi F_{4}(a_{1}) & [ 2, 2, 0, 2 ] & 46 &\non  & \oui \leftarrow \O_{(2,1),(2)}^{\{1,2,4\}} & \non
\end{array}
$$

\noindent\hrulefill
\newpage

\subsection*{Type $\bf{E_{6}}$}~

$$
\begin{Dynkin}
\Dspace\Dspace\Dbloc{\Dcirc\Dsouth\Dtext{t}{2}}
\Dskip
\Dbloc{\Dcirc\Deast\Dtext{b}{1}}
\Dbloc{\Dcirc\Dwest\Deast\Dtext{b}{3}}
\Dbloc{\Dcirc\Dwest\Deast\Dnorth\Dtext{b}{4}}
\Dbloc{\Dcirc\Dwest\Deast\Dtext{b}{5}}
\Dbloc{\Dcirc\Dwest\Dtext{b}{6}}
\end{Dynkin}
$$

\medskip

$$
\begin{array}{c|c|c|c|c|c}
\multicolumn{2}{c|}{\O}&  \dim \O & RC_{1}  & RC_{2} & \hbox{rigid} \\ \hline
\tvi A_{1} & [ 0, 1, 0, 0, 0, 0 ] & 22 & \oui \leftarrow \hbox{little} & \oui \leftarrow \hbox{little} & \oui \\ \hline  
\tvi 2A_{1} & [ 1, 0, 0, 0, 0, 1 ] &  32 & \oui \leftarrow \hbox{little}  & \oui \leftarrow \hbox{little} & \non  \\ \hline  
\tvi 3A_{1} & [ 0, 0, 0, 1, 0, 0 ] & 40 & \oui \leftarrow  \mathrm{Res}_{E_{6}}^{E_{7}} \O_{(3A_{1})'}  & ? & \oui \\ \hline  
\tvi A_{2} &  [ 0, 2, 0, 0, 0, 0 ]  & 42 & \oui \leftarrow \mathrm{Res}_{E_{6}}^{E_{7}} \O_{A_{2}} &  ? & \non  \\ \hline 
\tvi A_{2}+A_{1} & [ 1, 1, 0, 0, 0, 1 ] & 46 & \non  & \oui \leftarrow \O_{min}^{\{1,2,3,4,5\}} & \non \\ \hline  
\tvi 2A_{2} &  [ 2, 0, 0, 0, 0, 2 ] & 48 & \non & \oui \leftarrow \O_{(3,1^{7})}^{\{1,2,3,4,5\}} & \non \\ \hline 
\tvi A_{2} + 2A_{1} &  [ 0, 0, 1, 0, 1, 0 ] & 50 & \non & ? & \non \\ \hline  
\tvi A_{3} &   [ 1, 2, 0, 0, 0, 1 ] & 52 & \non & \oui \leftarrow \O_{(2,1^{4})}^{\{1,3,4,5,6\}} & \non \\ \hline  
\tvi 2A_{2} +A_{1} &  [ 1, 0, 0, 1, 0, 1 ] & 54&\non & ? & \oui \\ \hline  
\tvi A_{3} +A_{1} &  [ 0, 1, 1, 0, 1, 0 ] & 56 &\non & ? & \non \\ \hline
\tvi D_{4}(a_{1}) &  [ 0, 0, 0, 2, 0, 0 ] & 58 &\non  & \oui \leftarrow \O_{(2^{2},1^{2})}^{\{1,3,4,5,6\}} & \non \\ \hline  
\tvi A_{4} &  [ 2, 2, 0, 0, 0, 2 ] & 60 &\non & \oui \leftarrow \O_{(3,1^{3})}^{\{1,3,4,5,6\}} & \non \\ \hline  
\tvi D_{4} &  [ 0, 2, 0, 2, 0, 0 ] & 60 &\non & ? & \non  \\ \hline  
\tvi A_{4} + A_{1} &  [ 1, 1, 1, 0, 1, 1 ] & 62 &\non  &  \oui \leftarrow \O_{( (2,1^{2}) ,(1^{2}) )}^{\{1,2,3,4,6\}} & \non \\ \hline  
\tvi A_{5} &  [ 2, 1, 1, 0, 1, 2 ] & 64 &\non  & ? & \non \\ \hline  
\tvi D_{5}(a_{1}) & [ 1, 2, 1, 0, 1, 1 ] & 64 &\non  &  \oui \leftarrow \O_{(3,2,1)}^{\{1,3,4,5,6\}} & \non \\ \hline  
\tvi E_{6}(a_{3}) &  [ 2, 0, 0, 2, 0, 2 ] & 66 &\non  &  \oui \leftarrow \O_{(4,1^{2})}^{\{1,3,4,5,6\}} & \non \\ \hline  
\tvi D_{5} &  [ 2, 2, 0, 2, 0, 2 ] & 68 &\non  &  \oui \leftarrow \O_{(4,2)}^{\{1,3,4,5,6\}} & \non \\ \hline  
\tvi E_{6}(a_{1}) &  [ 2, 2, 2, 0, 2, 2  ] & 70 &\non  & \oui \leftarrow \O_{(5,1)}^{\{1,3,4,5,6\}} & \non
\end{array}
$$

\medskip

The notation $\mathrm{Res}_{E_{6}}^{E_{7}} \O$ means that the orbit is obtained by 
restriction from the little nilpotent orbit $\O$ in $E_{7}$ as explained in Table \ref{tab:res}.

\medskip
\noindent\hrulefill

\newpage

\subsection*{Type $\bf{E_{7}}$}~

$$
\begin{Dynkin}
\Dspace\Dspace\Dbloc{\Dcirc\Dsouth\Dtext{t}{2}}
\Dskip
\Dbloc{\Dcirc\Deast\Dtext{b}{1}}
\Dbloc{\Dcirc\Dwest\Deast\Dtext{b}{3}}
\Dbloc{\Dcirc\Dwest\Deast\Dnorth\Dtext{b}{4}}
\Dbloc{\Dcirc\Dwest\Deast\Dtext{b}{5}}
\Dbloc{\Dcirc\Dwest\Deast\Dtext{b}{6}}
\Dbloc{\Dcirc\Dwest\Dtext{b}{7}}
\end{Dynkin}
$$
\bigskip

$$
\begin{array}{c|c|c|c|c|c}
\multicolumn{2}{c|}{\O}&  \dim \O & RC_{1} & RC_{2} & \hbox{rigid} \\ \hline
\tvi A_{1} & [ 1, 0, 0, 0, 0, 0, 0 ] & 34 & \oui \leftarrow \hbox{little} &  \oui \leftarrow \hbox{little} & \oui \\ \hline
\tvi 2A_{1} & [ 0, 0, 0, 0, 0, 1, 0 ] & 52 &  \oui \leftarrow \hbox{little}  &  \oui  \leftarrow \hbox{little} & \oui \\ \hline
\tvi (3A_{1})'' & [ 0, 0, 0, 0, 0, 0, 2 ] & 54 & \oui \leftarrow \hbox{little}  & \oui \leftarrow \hbox{little} & \non \\ \hline
\tvi (3A_{1})' & [ 0, 0, 1, 0, 0, 0, 0 ] & 64 &  \oui \leftarrow \hbox{little}  &  \oui \leftarrow \hbox{little} & \oui  \\ \hline
\tvi A_{2} & [ 2, 0, 0, 0, 0, 0, 0 ] & 66 & \oui \leftarrow \hbox{little}  & \oui \leftarrow \hbox{little} & \non \\ \hline
\tvi 4A_{1} & [ 0, 1, 0, 0, 0, 0, 1 ] & 70 & \non & ? & \oui \\ \hline
\tvi A_{2}+A_{1} & [ 1, 0, 0, 0, 0, 1, 0 ] & 76 & \non & \oui \leftarrow \O^{\{1,2,3,4,5,6\}}_{[0,1,0,0,0,0]} & \non \\ \hline
\tvi A_{2}+2A_{1} & [ 0, 0, 0, 1, 0, 0, 0 ] & 82 & \non & ? & \oui \\ \hline
\tvi A_{3} & [ 2, 0, 0, 0, 0, 1, 0 ] & 84 & \non & \oui \leftarrow \O^{\{2,3,4,5,6,7\}}_{( 2^{2},1^{8})} & \non \\ \hline
\tvi 2A_{2} & [ 0, 0, 0, 0, 0, 2, 0 ] & 84 & \non & ? & \non \\ \hline
\tvi A_{2}+3A_{1} & [ 0, 2, 0, 0, 0, 0, 0 ] & 84 & \non & ? & \non \\ \hline
\tvi (A_{3}+A_{1})'' & [ 2, 0, 0, 0, 0, 0, 2 ] & 86 & \non & \oui \leftarrow \O^{\{2,3,4,5,6,7\}}_{( 3,1^{9})} & \non \\ \hline
\tvi 2A_{2}+A_{1} & [ 0, 0, 1, 0, 0, 1, 0 ] & 90 & \non & ? & \oui  \\ \hline
\tvi (A_{3}+A_{1})' & [ 1, 0, 0, 1, 0, 0, 0 ] & 92 & \non & ? & \oui \\ \hline
\tvi D_{4}(a_{1}) & [ 0, 0, 2, 0, 0, 0, 0 ] & 94 & \non &  \oui \leftarrow \O^{\{2,3,4,5,6,7\}}_{( 2^{4},1^{4})} & \non  \\ \hline
\tvi A_{3} + 2A_{1} & [ 1, 0, 0, 0, 1, 0, 1 ] & 94 & \non & ? & \non \\ \hline
\tvi D_{4} & [ 2, 0, 2, 0, 0, 0, 0 ] & 96 & \non &  \oui \leftarrow \O^{\{2,3,4,5,6,7\}}_{[0,0,0,0,2,0]} & \non \\ \hline
\tvi D_{4}(a_{1})+A_{1} & [ 0, 1, 1, 0, 0, 0, 1 ] & 96 & \non &  \oui \leftarrow \O^{\{2,3,4,5,6,7\}}_{[0,0,0,0,0,2]} & \non \\ \hline
\tvi A_{3}+A_{2} & [ 0, 0, 0, 1, 0, 1, 0 ] & 98 & \non & \oui \leftarrow \O^{\{2,3,4,5,6,7\}}_{( 3,2^{2}, 1^{5})} & \non \\ \hline
\tvi A_{4} & [ 2, 0, 0, 0, 0, 2, 0 ] & 100 & \non &  \oui \leftarrow \O^{\{2,3,4,5,6,7\}}_{( 3^{2},1^{6})} & \non \\ \hline
\tvi A_{3}+A_{2}+A_{1} & [ 0, 0, 0, 0, 2, 0, 0 ] & 100 & \non & ? & \non  \\ \hline
\tvi (A_{5})'' & [ 2, 0, 0, 0, 0, 2, 2 ] & 102 & \non &  \oui \leftarrow \O^{\{2,3,4,5,6,7\}}_{( 5,1^{7})} & \non
\end{array}
$$

\medskip

Note that the characteristics $[0,0,0,0,2,0]$ and $[0,0,0,0,0,2]$ of nilpotent orbits in $\bf{D_{6}}$ correspond to the very even partition $(2^{6})$.

\subsection*{Type $\bf{E_{7}}$ (cont'd)} ~

$$
\begin{Dynkin}
\Dspace\Dspace\Dbloc{\Dcirc\Dsouth\Dtext{t}{2}}
\Dskip
\Dbloc{\Dcirc\Deast\Dtext{b}{1}}
\Dbloc{\Dcirc\Dwest\Deast\Dtext{b}{3}}
\Dbloc{\Dcirc\Dwest\Deast\Dnorth\Dtext{b}{4}}
\Dbloc{\Dcirc\Dwest\Deast\Dtext{b}{5}}
\Dbloc{\Dcirc\Dwest\Deast\Dtext{b}{6}}
\Dbloc{\Dcirc\Dwest\Dtext{b}{7}}
\end{Dynkin}
$$
\bigskip

$$
\begin{array}{c|c|c|c|c|c}
\multicolumn{2}{c|}{\O}&  \dim \O & RC_{1} & RC_{2} & \hbox{rigid} \\ \hline
\tvi D_{4}+A_{1} & [ 2, 1, 1, 0, 0, 0, 1 ] & 102 & \non & ? & \non \\ \hline
\tvi A_{4}+A_{1} & [ 1, 0, 0, 1, 0, 1, 0 ] & 104 & \non &  \oui \leftarrow \O^{\{1,3,4,5,6,7\}}_{( 2^{2},1^{3})} & \non \\ \hline
\tvi D_{5}(a_{1}) & [ 2, 0, 0, 1, 0, 1, 0 ] & 106 & \non &  \oui \leftarrow \O^{\{2,3,4,5,6,7\}}_{( 3^{2},2^{2},1^{2})} & \non \\ \hline
\tvi A_{4}+A_{2} & [ 0, 0, 0, 2, 0, 0, 0 ] & 106 & \non & ? & \non \\ \hline
\tvi (A_{5})' & [ 1, 0, 0, 1, 0, 2, 0 ] & 108 & \non & ? & \non \\ \hline
\tvi A_{5}+A_{1} & [ 1, 0, 0, 1, 0, 1, 2 ] & 108 & \non & ? & \non \\ \hline
\tvi D_{5}(a_{1}) +A_{1} & [ 2, 0, 0, 0, 2, 0, 0 ] & 108 & \non &  \oui \leftarrow \O^{\{1,2,3,4,6,7\}}_{(2,1^{3}),(1^{3})} & \non \\ \hline
\tvi D_{5}(a_{2}) & [ 0, 1, 1, 0, 1, 0, 2 ] & 110 & \non & ? & \non \\ \hline
\tvi E_{6}(a_{3}) & [ 0, 0, 2, 0, 0, 2, 0 ] & 110 & \non & \oui \leftarrow \O^{\{1,2,3,5,6,7\}}_{(2,1),(1^{2}), (1^{4})} & \non \\ \hline
\tvi D_{5} & [ 2, 0, 2, 0, 0, 2, 0 ] & 112 & \non & \oui \leftarrow \O^{\{2,3,4,5,6,7\}}_{[0,2,0,0,2,0]} & \non \\ \hline
\tvi E_{7}(a_{5}) & [ 0, 0, 0, 2, 0, 0, 2 ] & 112 & \non &  \oui \leftarrow \O^{\{2,3,4,5,6,7\}}_{[0,2,0,0,0,2]} & \non \\ \hline
\tvi A_{6} & [ 0, 0, 0, 2, 0, 2, 0 ] & 114 & \non & ?  & \non \\ \hline
\tvi D_{5}+A_{1} & [ 2, 1, 1, 0, 1, 1, 0 ] & 114 & \non & \oui \leftarrow \O^{\{1,2,3,4,6,7\}}_{(3,1^{2}),(1^{3})} & \non \\ \hline
\tvi D_{6}(a_{1}) & [ 2, 1, 1, 0, 1, 0, 2 ] & 114 & \non & \oui \leftarrow \O^{\{1,2,4,5,6,7\}}_{(2),( 3,1^{3})}  & \non \\ \hline
\tvi E_{7}(a_{4}) & [ 2, 0, 0, 2, 0, 0, 2 ] & 116 & \non & \oui \leftarrow \O^{\{1,3,4,5,6,7\}}_{( 3^{2},1)} & \non \\ \hline
\tvi D_{6} & [ 2, 1, 1, 0, 1, 2, 2 ] & 118 & \non & ? & \non \\ \hline
\tvi E_{6}(a_{1}) & [ 2, 0, 0, 2, 0, 2, 0 ] & 118 & \non & \oui \leftarrow \O^{\{2,3,4,5,6,7\}}_{( 5^{2},1^{2})} & \non  \\ \hline
\tvi E_{6} & [ 2, 0, 2, 2, 0, 2, 0 ] & 120 & \non &  \oui \leftarrow \O^{\{2,3,4,5,6,7\}}_{[0,2,0,2,2,0]}  & \non \\ \hline
\tvi E_{7}(a_{3}) & [ 2, 0, 0, 2, 0, 2, 2 ] & 120 & \non &   \oui \leftarrow \O^{\{2,3,4,5,6,7\}}_{[0,2,0,2,0,2]} & \non \\ \hline
\tvi E_{7}(a_{2}) & [ 2, 2, 2, 0, 2, 0, 2 ] & 122 & \non &  \oui \leftarrow \O^{\{1,3,4,5,6,7\}}_{( 5,2)} & \non \\ \hline
\tvi E_{7}(a_{1}) & [ 2, 2, 2, 0, 2, 2, 2 ] & 124 & \non &  \oui \leftarrow \O^{\{1,3,4,5,6,7\}}_{( 6,1)} & \non
\end{array}
$$
\medskip

Note that the characteristics $[0,2,0,0,2,0]$ and $[0,2,0,0,0,2]$ of nilpotent orbits in $\bf{D_{6}}$ correspond to the 
very even partition $(4^{2},2^{2})$,
while $[0,2,0,2,2,0]$ and $[0,2,0,2,0,2]$ correspond to $(6^{2})$.

\medskip
\noindent\hrulefill
\newpage

\subsection*{Type $\bf{E_{8}}$}~

$$
\begin{Dynkin}
\Dspace\Dspace\Dbloc{\Dcirc\Dsouth\Dtext{t}{2}}
\Dskip
\Dbloc{\Dcirc\Deast\Dtext{b}{1}}
\Dbloc{\Dcirc\Dwest\Deast\Dtext{b}{3}}
\Dbloc{\Dcirc\Dwest\Deast\Dnorth\Dtext{b}{4}}
\Dbloc{\Dcirc\Dwest\Deast\Dtext{b}{5}}
\Dbloc{\Dcirc\Dwest\Deast\Dtext{b}{6}}
\Dbloc{\Dcirc\Dwest\Deast\Dtext{b}{7}}
\Dbloc{\Dcirc\Dwest\Dtext{b}{8}}
\end{Dynkin}
$$

$$
\begin{array}{c|c|c|c|c|c}
\multicolumn{2}{c|}{\O}&  \dim \O & RC_{1} & RC_{2} & \hbox{rigid} \\ \hline
\tvi  A_{1} &  [ 0,0,0,0,0,0,0,1]  &  58  &   \oui \leftarrow \hbox{little} & \oui \leftarrow \hbox{little} & \oui \\ \hline
\tvi 2A_{1}  &  [ 1,0,0,0,0,0,0,0]  &  92 &   \oui \leftarrow \hbox{little} & \oui \leftarrow \hbox{little} & \oui \\ \hline
\tvi 3A_{1} &  [ 0,0,0,0,0,0,1,0]  &  112 &   \oui \leftarrow \hbox{little} & \oui \leftarrow \hbox{little} & \oui \\ \hline
\tvi A_{2} &  [ 0,0,0,0,0,0,0,2]  &  114  &   \oui \leftarrow \hbox{little}  & \oui \leftarrow \hbox{little} & \non \\ \hline
\tvi 4A_{1} &  [ 0,1,0,0,0,0,0,0]  &  128  &   \non & ? & \oui \\ \hline
\tvi A_{2}+A_{1} &  [ 1,0,0,0,0,0,0,1]  &  136  &   \non & ? & \oui \\ \hline
\tvi A_{2}+2A_{1} &  [ 0,0,0,0,0,1,0,0]  &  146  &   \non & ? & \oui \\ \hline
\tvi A_{3} &  [ 1,0,0,0,0,0,0,2]  &  148  &   \non &  \oui \leftarrow \O_{[1,0,0,0,0,0,0]}^{\{1,2,3,4,5,6,7\} } & \non \\ \hline
\tvi A_{2}+3A_{1} &  [ 0,0,1,0,0,0,0,0]  &  154  &   \non & ? & \oui \\ \hline
\tvi 2A_{2} &  [ 2,0,0,0,0,0,0,0]  &  156  &   \non & ? & \non \\ \hline
\tvi 2A_{2}+A_{1} &  [ 1,0,0,0,0,0,1,0]  &  162 &   \non & ? & \oui \\ \hline
\tvi A_{3}+A_{1} &  [ 0,0,0,0,0,1,0,1]  &  164  &   \non & ? & \oui \\ \hline
\tvi D_{4}(a_{1}) &  [ 0,0,0,0,0,0,2,0]  &  166  &   \non &  \oui \leftarrow \O_{[0,0,0,0,0,1,0]}^{\{1,2,3,4,5,6,7\} }  & \non \\ \hline
\tvi D_{4} &  [ 0,0,0,0,0,0,2,2]  &  168  &   \non &  \oui \leftarrow \O_{[0,0,0,0,0,0,2]}^{\{ 1,2,3,4,5,6,7\} }  & \non \\ \hline
\tvi 2A_{2}+2A_{1} &  [ 0,0,0,0,1,0,0,0]  &  168  &   \non & ? & \oui \\ \hline
\tvi A_{3}+2A_{1} &  [ 0,0,1,0,0,0,0,1]  &  172 &   \non & ? & \oui \\ \hline
\tvi D_{4}(a_{1})+A_{1} &  [ 0,1,0,0,0,0,1,0]  &  176  &   \non & ? & \oui \\ \hline
\tvi A_{3}+A_{2} &  [ 1,0,0,0,0,1,0,0]  &  178  &   \non & \oui \leftarrow \O_{(2^{2},1^{10})}^{\{ 2,3,4,5,6,7,8\} } & \non \\ \hline
\tvi A_{4} &  [ 2,0,0,0,0,0,0,2]  &  180 &   \non & \oui \leftarrow \O_{(3,1^{11})}^{\{ 2,3,4,5,6,7,8\} } & \non \\ \hline
\tvi A_{3}+A_{2}+A_{1} &  [ 0,0,0,1,0,0,0,0]  &  182 &   \non & ? &  \oui \\ \hline
\tvi D_{4}+A_{1} &  [ 0,1,0,0,0,0,1,2]  &  184  &   \non & ? & \non \\ \hline
\tvi D_{4}(a_{1})+A_{2} &  [ 0,2,0,0,0,0,0,0]  &  184  &   \non & ? & \non \\ \hline
\tvi A_{4}+A_{1} &  [ 1,0,0,0,0,1,0,1]  &  188  &   \non &  \oui \leftarrow \O_{[0,1,0,0,0,0],(1^{2})}^{\{1,2,3,4,5,6,8\} } & \non
\end{array}
$$

\newpage

\subsection*{Type $\bf{E_{8}}$ (cont'd)}~

$$
\begin{Dynkin}
\Dspace\Dspace\Dbloc{\Dcirc\Dsouth\Dtext{t}{2}}
\Dskip
\Dbloc{\Dcirc\Deast\Dtext{b}{1}}
\Dbloc{\Dcirc\Dwest\Deast\Dtext{b}{3}}
\Dbloc{\Dcirc\Dwest\Deast\Dnorth\Dtext{b}{4}}
\Dbloc{\Dcirc\Dwest\Deast\Dtext{b}{5}}
\Dbloc{\Dcirc\Dwest\Deast\Dtext{b}{6}}
\Dbloc{\Dcirc\Dwest\Deast\Dtext{b}{7}}
\Dbloc{\Dcirc\Dwest\Dtext{b}{8}}
\end{Dynkin}
$$

$$
\begin{array}{c|c|c|c|c|c}
\multicolumn{2}{c|}{\O}&  \dim \O & RC_{1} & RC_{2} & \hbox{rigid} \\ \hline
\tvi 2A_{3} &  [ 1,0,0,0,1,0,0,0]  &  188  &   \non & ? & \oui  \\ \hline
\tvi D_{5}(a_{1}) &  [ 1,0,0,0,0,1,0,2]  &  190 &   \non & \oui \leftarrow \O_{(2^{4},1^{6})}^{\{2,3,4,5,6,7,8\} } & \non \\ \hline
\tvi A_{4}+2A_{1} &  [ 0,0,0,1,0,0,0,1]  &  192 &   \non & ? & \non  \\ \hline
\tvi A_{4}+A_{2} &  [ 0,0,0,0,0,2,0,0]  &  194  &   \non & ? & \non \\ \hline
\tvi A_{5} &  [ 2,0,0,0,0,1,0,1]  &  196  &   \non & \oui \leftarrow \O_{(3,2^{2},1^{7})}^{\{2,3,4,5,6,7,8\} }  & \non \\ \hline
\tvi D_{5}(a_{1})+A_{1} &  [ 0,0,0,1,0,0,0,2]  &  196  &   \non & ? & \non \\ \hline
\tvi A_{4}+A_{2}+A_{1} &  [ 0,0,1,0,0,1,0,0]  &  196  &   \non & ? & \non \\ \hline
\tvi D_{4}+A_{2} &  [ 0,2,0,0,0,0,0,2]  &  198  &   \non & \oui \leftarrow \O_{(2^{6},1^{2})}^{\{2,3,4,5,6,7,8\} } & \non \\ \hline
\tvi E_{6}(a_{3}) &  [ 2,0,0,0,0,0,2,0]  &  198  &   \non & \oui \leftarrow \O_{(3^{2},1^{8})}^{\{2,3,4,5,6,7,8\} } & \non  \\ \hline
\tvi D_{5} &  [ 2,0,0,0,0,0,2,2]  &  200 &   \non &  \oui \leftarrow \O_{(5,1^{9})}^{\{2,3,4,5,6,7,8\} } & \non  \\ \hline
\tvi A_{4}+A_{3} &  [ 0,0,0,1,0,0,1,0]  &  200 &   \non & ? & \oui \\ \hline
\tvi A_{5}+A_{1} &  [ 1,0,0,1,0,0,0,1]  &  202 &   \non & ? & \oui \\ \hline
\tvi D_{5}(a_{1})+A_{2} &  [ 0,0,1,0,0,1,0,1]  &  202 &   \non & ? & \oui \\ \hline
\tvi D_{6}(a_{2}) &  [ 0,1,1,0,0,0,1,0]  &  204  &   \non & ? & \non \\ \hline
\tvi E_{6}(a_{3})+A_{1} &  [ 1,0,0,0,1,0,1,0]  &  204  &   \non & ? & \non \\ \hline
\tvi E_{7}(a_{5}) &  [ 0,0,0,1,0,1,0,0]  &  206  &   \non & ? & \non \\ \hline
\tvi D_{5}+A_{1} &  [ 1,0,0,0,1,0,1,2]  &  208  &   \non & ? & \non \\ \hline
\tvi E_{8}(a_{7}) &  [ 0,0,0,0,2,0,0,0]  &  208  &   \non & \oui \leftarrow \O_{(3^{2},2^{2},1^{4})}^{\{2,3,4,5,6,7,8\} } & \non \\ \hline
\tvi A_{6} &  [ 2,0,0,0,0,2,0,0]  &  210 &   \non & \oui \leftarrow \O_{(3,1^{7}), (1^{3})}^{\{ 1,2,3,4,5,7,8\} } & \non \\ \hline
\tvi D_{6}(a_{1}) &  [ 0,1,1,0,0,0,1,2]  &  210 &   \non & \oui \leftarrow \O_{(3^{2},2^{4})}^{\{2,3,4,5,6,7,8\} } & \non \\ \hline
\tvi A_{6}+A_{1} &  [ 1,0,0,1,0,1,0,0]  &  212 &   \non & ? & \non \\ \hline
\tvi E_{7}(a_{4}) &  [ 0,0,0,1,0,1,0,2]  &  212 &   \non &  \oui \leftarrow \O_{[0,0,0,1,0,1,0]}^{\{1,2,3,4,5,6,7\} } & \non \\ \hline
\tvi E_{6}(a_{1}) &  [ 2,0,0,0,0,2,0,2]  &  214  &   \non & \oui \leftarrow \O_{(5,3,1^{6})}^{\{2,3,4,5,6,7,8\} } & \non
\end{array}
$$

\newpage

\subsection*{Type $\bf{E_{8}}$ (cont'd)}~

$$
\begin{Dynkin}
\Dspace\Dspace\Dbloc{\Dcirc\Dsouth\Dtext{t}{2}}
\Dskip
\Dbloc{\Dcirc\Deast\Dtext{b}{1}}
\Dbloc{\Dcirc\Dwest\Deast\Dtext{b}{3}}
\Dbloc{\Dcirc\Dwest\Deast\Dnorth\Dtext{b}{4}}
\Dbloc{\Dcirc\Dwest\Deast\Dtext{b}{5}}
\Dbloc{\Dcirc\Dwest\Deast\Dtext{b}{6}}
\Dbloc{\Dcirc\Dwest\Deast\Dtext{b}{7}}
\Dbloc{\Dcirc\Dwest\Dtext{b}{8}}
\end{Dynkin}
$$

$$
\begin{array}{c|c|c|c|c|c}
\multicolumn{2}{c|}{\O}&  \dim \O & RC_{1} &  RC_{2} & \hbox{rigid} \\ \hline
\tvi D_{5}+A_{2} &  [ 0,0,0,0,2,0,0,2]  &  214  &   \non & \oui \leftarrow \O_{(2^{3},1^{2})}^{\{ 1,3,4,5,6,7,8\} } & \non  \\ \hline
\tvi D_{6} &  [ 2,1,1,0,0,0,1,2]  &  216  &   \non & ? & \non \\ \hline
\tvi E_{6} &  [ 2,0,0,0,0,2,2,2]  &  216  &   \non & \oui \leftarrow \O_{(7,1^{7})}^{\{ 2,3,4,5,6,7,8\} } & \non \\ \hline
\tvi D_{7}(a_{2}) &  [ 1,0,0,1,0,1,0,1]  &  216  &   \non & \oui \leftarrow \O_{(1^{2}) ,(2^{2} , 1^{3})}^{\{ 1,2,4,5,6,7,8\} } & \non \\ \hline
\tvi A_{7} &  [ 1,0,0,1,0,1,1,0]  &  218  &   \non & ? & \non \\ \hline
\tvi E_{6}(a_{1})+A_{1} &  [ 1,0,0,1,0,1,0,2]  &  218  &   \non & \oui \leftarrow \O_{(4^{2},2^{2},1^{2})}^{\{ 2,3,4,5,6,7,8\} } & \non  \\ \hline
\tvi E_{7}(a_{3}) &  [ 2,0,0,1,0,1,0,2]  &  220 &   \non & \oui \leftarrow \O_{(5,3,2^{2},1)}^{\{ 2,3,4,5,6,7,8\} } & \non \\ \hline
\tvi E_{8}(b_{6}) &  [ 0,0,0,2,0,0,0,2]  &  220 &   \non & \oui \leftarrow \O_{(4^{3},3,1^{3})}^{\{ 2,3,4,5,6,7,8\} }  & \non \\ \hline
\tvi D_{7}(a_{1}) &  [ 2,0,0,0,2,0,0,2]  &  222 &   \non & \oui \leftarrow \O_{(4^{2},3^{2})}^{\{ 2,3,4,5,6,7,8\} } & \non \\ \hline
\tvi E_{6}+A_{1} &  [ 1,0,0,1,0,1,2,2]  &  222 &   \non & ? & \non \\ \hline
\tvi E_{7}(a_{2}) &  [ 0,1,1,0,1,0,2,2]  &  224  &   \non & ? & \non \\ \hline
\tvi E_{8}(a_{6}) &  [ 0,0,0,2,0,0,2,0]  &  224  &   \non & \oui \leftarrow \O_{(5,3^{3})}^{\{ 2,3,4,5,6,7,8\} } & \non  \\ \hline
\tvi D_{7} &  [ 2,1,1,0,1,1,0,1]  &  226  &   \non & ? & \non \\ \hline
\tvi E_{8}(b_{5}) &  [ 0,0,0,2,0,0,2,2]  &  226  &   \non & \oui \leftarrow \O_{(5^{2},2^{2})}^{\{ 2,3,4,5,6,7,8\} } & \non  \\ \hline
\tvi E_{7}(a_{1}) &  [ 2,1,1,0,1,0,2,2]  &  228  &   \non & \oui \leftarrow \O_{(7,3,2^{2})}^{\{ 2,3,4,5,6,7,8\} } & \non \\ \hline
\tvi E_{8}(a_{5}) &  [ 2,0,0,2,0,0,2,0]  &  228  &   \non & \oui \leftarrow \O_{(5^{2},3,1)}^{\{ 2,3,4,5,6,7,8\} } & \non \\ \hline
\tvi E_{8}(b_{4}) &  [ 2,0,0,2,0,0,2,2]  &  230 &   \non &  \oui \leftarrow \O_{(6^{2},1^{2})}^{\{ 2,3,4,5,6,7,8\}Ê} & \non \\ \hline
\tvi E_{7} &  [ 2,1,1,0,1,2,2,2]  &  232 &   \non & ? & \non \\ \hline
\tvi E_{8}(a_{4}) &  [ 2,0,0,2,0,2,0,2]  &  232 &   \non &  \oui \leftarrow \O_{(7,5,1^{2})}^{\{ 2,3,4,5,6,7,8\} } & \non   \\ \hline
\tvi E_{8}(a_{3}) &  [ 2,0,0,2,0,2,2,2]  &  234  &   \non & \oui \leftarrow \O_{(7^{2})}^{\{ 2,3,4,5,6,7,8\} }  & \non \\ \hline
\tvi E_{8}(a_{2}) &  [ 2,2,2,0,2,0,2,2]  &  236  &   \non & \oui \leftarrow \O_{(9,5)}^{\{ 2,3,4,5,6,7,8\} } & \non \\ \hline

\tvi E_{8}(a_{1}) &  [ 2,2,2,0,2,2,2,2]  &  238  &   \non & \oui \leftarrow \O_{(11,3)}^{\{ 2,3,4,5,6,7,8\} } & \non
\end{array}
$$


\begin{thebibliography}{............}

\bibitem[BF13]{BF} M. Brion and B. Fu, 
{\em Symplectic resolutions for conical symplectic varieties}, 
preprint arXiv:1310.8193, to appear in Internat. Math. Res. Notices. 

\bibitem[B98]{Bro} A. Broer, 
{\em Normal nilpotent varieties in $F_4$}, J. Algebra {\bf 207} (1998), n$^\circ$2, 
427--448.

\bibitem[CMo10]{CMo} J.-Y. Charbonnel and A. Moreau, 
{\em The index of centralizers of elements of reductive Lie algebras},  
Doc. Math., {\bf 15} (2010), 387--421.

\bibitem[CM93]{CMa} D. Collingwood and W.M. McGovern, 
{\em Nilpotent orbits in semisimple {L}ie algebras},
Van Nostrand Reinhold Co. New York {\bf 65} (1993). 

\bibitem[DEM]{DEM} T. De Fernex, L. Ein and M. Musta\c{t}\u{a}, 
{\em Vanishing theorems and singularities in birational
geometry}, book in preparation available at 
http://www.math.utah.edu/~defernex/book.pdf

\bibitem[EM09]{EM} L. Ein and M. Musta\c{t}\u{a}, 
{\em Jet schemes and singularities}, 
Proc. Sympos. Pure Math., {\bf 80}, Part 2, Amer. Math. Soc., Providence, (2009).

\bibitem[Ha76]{Ha} R. Hartshorne, 
{\em Algebraic Geometry}, Graduate Texts in Mathematics n$^\circ$52 (1977), 
Springer-Verlag, Berlin Heidelberg New York.

\bibitem[Hen14]{Hen} A. Henderson, 
{\em Singularities of nilpotent orbit closures}, 
preprint http://arxiv.org/pdf/1408.3888v1.pdf. 

\bibitem[Hes76]{He} W.H. Hesselink, 
{\em Cohomology and the resolution of the nilpotent variety},
Math. Ann. {\bf 223} (1976), 249--252.

\bibitem[Hi91]{Hi} V. Hinich, 
{\em On the singularities of nilpotent orbits}, 
Israel J. Math. {\bf 73} (1991), n$^\circ$3, 297--308. 

\bibitem[Is11]{Is} S. Ishii, 
{\em Geometric properties of jet schemes}, 
Comm. Algebra {\bf 39} (2011), n$^\circ$5, 1872--1882. 

\bibitem[K89]{Kr} H. Kraft, 
{\em Closures of conjugacy classes in $G_2$}, 
J. Algebra {\bf 126} (1989), n$^\circ$2, 454--465.

\bibitem[KP82]{KP} H. Kraft and C. Procesi, 
{\em On the geometry of conjugacy classes in classical groups}, 
Comment. Math. Helv. {\bf 57} (1982), n$^\circ$4, 539--602.

\bibitem[Ka06]{Ka} D. Kaledin, 
{\em Symplectic singularities from the Poisson point of view}, 
J. Reine Angew. Math. {\bf 600} (2006), 135--156.

\bibitem[Ke83]{Ke} G. Kempken, 
{\em Induced conjugacy classes in classical Lie algebras}, 
Abh. Math. Sem. Univ. Hamburg {\bf 53} (1983), 53--83.

\bibitem[Kol73]{Kol} E. Kolchin, 
{\em Differential algebra and algebraic groups}, 
Academic Press, New York 1973.

\bibitem[Kos63]{Ko} B. Kostant, 
{\em Lie group representations on polynomial rings}, 
Amer. J. Math. {\bf 85} (1963), 327--402.


\bibitem[LeS8]{LeS} T. Levasseur and S.P. Smith, 
{\em Primitive ideals and nilpotent orbits in type $G_2$}, 
J. Algebra {\bf 114} (1988), 81--105.

\bibitem[LuS79]{LS}G. Lusztig and N. Spaltenstein,
{\em Induced unipotent classes}, J. London Math. Soc.
{\bf 19} (1979), 41--52.


\bibitem[MFK94]{MFK} D. Mumford, J. Fogarty, and F. Kirwan, 
{\em Geometric invariant theory}, Third edition. Ergebnisse der Mathematik und ihrer Grenzgebiete (2), {\bf 34}, Springer-Verlag, Berlin, 1994. 

\bibitem[M01]{Mu} M. Musta\c{t}\u{a}, 
{\em Jet schemes of locally complete intersection canonical singularities}, 
Invent. Math., {\bf 145} (2001), n$^\circ$3, 397--424, 
with an appendix by D. Eisenbud and E. Frenkel. 


\bibitem[Nam13]{Na} Y. Namikawa, 
{\em On the structure of homogeneous symplectic varieties of complete intersection}, 
Invent. Math. {\bf 193} (2013), n$^\circ$1, 159--185.

\bibitem[Nas96]{Nas} J.F. Nash, 
{\em Arc structure of singularities}, A celebration of John F. Nash, Jr. 
Duke Math. J. {\bf 81} (1995), n$^\circ$1, 31--38 (1996).

\bibitem[P91]{Pa} D.I. Panyushev, 
{\em Rationality of singularities and the Gorenstein property for nilpotent orbits}, 
Functional Analysis and Its Applications, 1991, {\bf 25}, n$^\circ$3, 225--226.

\bibitem[RT92]{RT} M. Ra\"{i}s and P Tauvel, 
{\em Indice et polyn\^{o}mes invariants pour certaines alg\`{e}bres de Lie}, 
J. Reine Angew. Math. {\bf 425} (1992), 123--140. 

\bibitem[R74]{Ri} R.W. Richardson, 
{\em Conjugacy classes in parabolic subgroups of semisimple algebraic groups}, 
Bull. London Math. Soc. {\bf 6} (1974), 21--24. 

\bibitem[So03]{So} E. Sommers, 
{\em Normality of nilpotent varieties in $E_6$}. 
J. Algebra {\bf 270} (2003), n$^\circ$1, 288--306. 

\bibitem[Sp82]{Sp} N. Spaltenstein, 
{\em Classes Unipotentes et Sous-Groupes de Borel}, 
Number {\bf 946} in Lecture Notes in Math. Springer-Verlag, Berlin-Heidelberg-New York, 
1982.

\bibitem[TY05]{TY}
P. Tauvel and R.W.T. Yu, 
{\em Lie algebras and algebraic groups}, Monographs
in Mathematics (2005), Springer, Berlin Heidelberg New York.

\bibitem[W89]{We} J. Weyman, {\em The equations of conjugacy classes of nilpotent 
matrices}, Invent. Math. {\bf 98} (1989), n$^\circ$2, 229--245.

\bibitem[W02]{We2} J. Weyman, 
{\em Two results on equations of nilpotent orbits}, 
J. Algebraic Geom. {\bf 11} (2002), n$^\circ$4, 791--800.

\bibitem[W03]{We3} J. Weyman, 
{\em Cohomology of vector bundles and syzygies}, 
Cambridge Tracts in Mathematics, {\bf 149}. Cambridge University Press, 
Cambridge, 2003.
 
\bibitem[Y07]{Yu} C. Yuen, 
{\em Jet schemes of determinantal varieties}, 
Algebra, geometry and their interactions, 261--270, 
Contemp. Math., {\bf 448} (2007), Amer. Math. Soc., Providence, RI, 261--270. 

\end{thebibliography}
\end{document}